\newcommand{\R}{\ensuremath{\mathbb{R}}}
\newcommand{\N}{\ensuremath{\mathbb{N}}}
\newcommand{\Z}{\ensuremath{\mathbb{Z}}}
\renewcommand{\epsilon}{\varepsilon}
\DeclareMathOperator{\sign}{sign}
\DeclareMathOperator{\length}{length}
\newcommand{\pd}{\partial}
\newcommand{\pr}{\partial}
\newcommand{\cd}{\nabla}
\newcommand{\inner}[2]{\left\langle #1 \, , \, #2\right\rangle} %Euclidean inner product
\def\labelitemi{--}
\def\ba #1\ea {\begin{align} #1\end{align}}
\def\bann #1\eann {\begin{align*} #1\end{align*}}
\def\ben #1\een {\begin{enumerate} #1\end{enumerate}}
\def\bi #1\ei {\begin{itemize}\renewcommand\labelitemi{--} #1\end{itemize}}
\newtheorem{theorem}{Theorem}[section]
\newtheorem{proposition}[theorem]{Proposition}
\newtheorem{lemma}[theorem]{Lemma}
\newtheorem{remark}{Remark}[section]
\newtheorem{corollary}[theorem]{Corollary}
\newtheorem{claim}{Claim}[theorem]
\newenvironment{usethmcounterof}[1]{%
  \theorem}{\endtheorem\addtocounter{theorem}{-1}}
\newcommand{\bs}{\boldsymbol}
\title[A distance comparison principle for CSF with free boundary]{A distance comparison principle for curve shortening flow with free boundary}
\author{Mat Langford}
\address{Mathematical Sciences Institute, Australian National University, Canberra, ACT 2601, Australia}
\email{mathew.langford@anu.edu.au}
\author{Jonathan J. Zhu}
\address{Department of Mathematics, University of Washington, Seattle, WA, USA}
\email{jonozhu@uw.edu}
\begin{document}

\maketitle

\begin{abstract}
We introduce a reflected chord-arc profile for curves with orthogonal boundary condition and obtain a chord-arc estimate for embedded free boundary curve shortening flows in a convex planar domain. As a consequence, we are able to prove that any such flow either converges in infinite time to a (unique) ``critical chord'', or contracts in finite time to a ``round half-point'' on the boundary. 
\end{abstract}

\setcounter{tocdepth}{1}
%\tableofcontents

%{\color{red}\subsection*{Notation} 

%For (sub)manifolds-with-boundary $M$, $\Omega$, $\Gamma$, do we want $M$, $\Omega$, $\Gamma$ to include the boundary or just denote the interior? Should we use $\overline M=M\cup\pd M$ and $M=M\setminus\pd M$ or $M=M\cup\pd M$ and $M^\circ/\mathrm{int}(M)/M\setminus\pd M=M\setminus\pd M$? I am in favour of consistency across the three cases.  [I made everything closed]

%Definition(?) A regular curve-with-boundary $\Gamma$ in a regular planar set $\Omega$ \emph{meets $\pd\Omega$ orthogonally} if $\Gamma\setminus\pd\Gamma\subset \Omega\setminus\pd\Omega$, $\pd\Gamma\subset\pd\Omega$ and $\inner{N^{\Gamma}}{N^{\pd\Omega}}|_{\pd\Gamma}=0$. [``Free" boundary is not really the right term if there are no dynamics.]

%I simplified some of the notation by dropping some $z$ subscripts (and clarifying that our angles are all thought of as functions of $x,y,z$).
%Should we just use $\pd\Omega$ and drop the notation $S$?
%}

\section{Introduction}

Curve shortening flow is the gradient flow of length for regular curves. It was introduced as a model for wearing processes \cite{Firey} and %the dynamics of physical systems governed by surface tension such as 
the evolution of grain boundaries in annealing metals \cite{Mullins,vonNeumann}, %which involve only one spatial degree of freedom. 
and has found a number of further applications, for example in image processing \cite{Sapiro}. It arises in areas as diverse as quantum field theory \cite{Bakas} and cellular automata \cite{MR1669736}. %(see, for example, \cite{Sapiro}).
The ultimate fate of closed, embedded curves in $\mathbb{R}^2$ under curve shortening flow is characterised by the theorems of Gage--Hamilton \cite{GageHamilton86} and Grayson \cite{Grayson}, which imply that any such curve must remain embedded, eventually becoming convex, before shrinking to a round point after a finite amount of time. 

%Recently, there has been significant interest in so-called \textit{free boundary} problems in geometry; in this context they concern curves whose endpoints must lie on a fixed constraint curve $S$. The free boundary curve shortening flow is the gradient flow of length for such curves, and it imposes an orthogonal contact angle (Neumann-type) boundary condition, in addition to the evolution by curvature on the interior (see Section \ref{sec:prelim}). Study of the free boundary problem was initiated by Huisken \cite{Huisken89}, Altschuler--Wu \cite{AltschulerWu}, and Stahl \cite{Stahl96b,Stahl96a}. 

Recently, there has been significant interest in so-called \textit{free boundary} problems in geometry. Study of the free-boundary curve shortening flow (whereby the endpoints of the solution curve are constrained to move on a fixed barrier curve which they must meet orthogonally) was initiated by Huisken \cite{Huisken89}, Altschuler--Wu \cite{AltschulerWu}, and Stahl \cite{Stahl96b,Stahl96a}. In particular, Stahl proved that bounded, convex, locally uniformly convex curves with free boundary on a smooth, convex, locally uniformly convex barrier remain convex and shrink to a point on the barrier curve. 

Our main theorem completely determines the long-time behaviour of simple closed intervals under free-boundary curve shortening flow in a convex domain.

\begin{usethmcounterof}{thm:fbGrayson}
Let $\Omega \subset \mathbb{R}^2$ be a convex domain of class $C^2$ and let $\{\Gamma_t\}_{t\in [0,T)}$ be a maximal free boundary curve shortening flow starting from a properly embedded interval $\Gamma_0$ in $\Omega$. Either:
\begin{enumerate}
\item[(a)] $T=\infty$, in which case $\Gamma_t$ converges smoothly as $t\to \infty$ to a chord in $\Omega$ which meets $\pd\Omega$ orthogonally; or 
\item[(b)] $T<\infty$, in which case $\Gamma_t$ converges uniformly to some $z\in \pr\Omega$, and
\[
\tilde\Gamma_t\doteqdot \frac{\Gamma_t -z}{\sqrt{2(T-t)}}
\]
converges uniformly in the smooth topology as $t\to T$ to the unit semicircle in $T_z\Omega$.
\end{enumerate}
\end{usethmcounterof}

%The evolution of \emph{closed} planar curves by curve shortening was initiated by Mullins \cite{Mullins} and later taken up by
Theorem \ref{thm:fbGrayson} represents a free-boundary analogue of the Gage--Hamilton--Grayson theorem. Note, however, that we must allow for long-time convergence to a stationary chord, which was not a possibility for closed planar curves. Observe that our statement includes uniqueness of the limiting chord, which is a subtle issue.\footnote{Indeed, there are examples of closed curve shortening flows in three-manifolds which have non-unique limiting behaviour as $t\to \infty$ (see \cite[Remark 4.2]{WhiteNotesEdelen}). On the other hand, Gage \cite{MR1046497} showed that closed curve shortening flow on the round $S^2$ does converge to a unique limiting geodesic.  More generally, on a closed Riemannian surface, Grayson \cite{MR979601} proved that a closed curve shortening flow always subconverges to a closed geodesic as $t\to\infty$ if $T=\infty$, but uniqueness of the limiting geodesic appears to remain open.}

Returning to the setting of closed planar curves, we recall that Gage \cite{Gage84} and Gage--Hamilton \cite{GageHamilton86} established that closed convex curves remain convex and shrink to ``round'' points in finite time under curve shortening flow by exploiting monotonicity of the isoperimetric ratio and Nash entropy of the evolving curves. By carefully exploiting ``zero-counting'' arguments for parabolic equations in one space variable, Grayson \cite{Grayson} was able to show that general closed embedded curves eventually become convex. Further proofs of these results were discovered by Hamilton \cite{MR1369140}, Huisken \cite{Huisken96}, Andrews \cite{MR2967056} and Andrews--Bryan \cite{MR2843240,AndrewsBryan}. Huisken's argument provides a rather quick route to the Gage--Hamilton--Grayson theorem via distance comparison: using only the maximum principle, he shows that the ratio of extrinsic to intrinsic distances --- the \textit{chord-arc} ratio --- does not degenerate under the flow. This precludes ``collapsing'' singularity models, and the result follows by a (smooth) ``blow-up'' argument. Andrews and Bryan provided a particularly direct route to the theorem by refining Huisken's argument: they obtained a \emph{sharp} estimate for the chord-arc profile, which implied much stronger control on the evolution, allowing for a direct proof of convergence.

Inspired by the approach of Huisken and Andrews--Bryan to planar curve shortening flow, we introduce a new ``extended'' chord-arc profile for embedded curves with orthogonal contact angle in a convex planar domain $\Omega$, and show that it cannot degenerate under free boundary curve shortening flow. The latter is sufficient to rule out collapsing singularity models, which is the key step in establishing Theorem \ref{thm:fbGrayson}. 

Our extended chord-arc profile is motivated by the half-planar setting: $\Omega=\mathbb{R}^2_+$. In this case, reflection across $\pd\R_+^2$ yields a curve shortening flow of closed curves in $\mathbb{R}^2$, so any suitable notion of chord-arc profile in the free boundary setting should account for the reflected part of the curve. Accordingly, we define the ``reflected'' distance $\tilde{d}(x,y)$ to be the length of the shortest single-bounce billiard trajectory in $\Omega$ connecting $x$ to $y$, and similarly define a reflected arclength. The extended chord-arc profile (see \S \ref{sec:extended-chord-arc}) then controls the relationship between extrinsic and intrinsic distance, with and without reflection. 

Our arguments therefore fit into the broader framework of maximum principle techniques for ``multi-point'' functions. Such techniques have been successfully applied to prove a number of key results in geometric analysis (see \cite{AndrewsSurvey,MR3061135} for a survey), including the distance comparison principles of Huisken \cite{Huisken96} and Andrews--Bryan \cite{AndrewsBryan}. However, applications in the context of Neumann boundary conditions are typically much more difficult than the closed (or periodic) case.

Finally, we mention that ruling out collapsing singularity models was also a key component of work of the second author and collaborators \cite{EHIZ} on free boundary \textit{mean curvature flow}, under the assumption of mean convexity. We remark that similar techniques provide a plausible alternative route to Theorem \ref{thm:fbGrayson}, so long as a suitable ``sheeting'' theorem can be established in the absence of the convexity condition.

The remainder of this paper is organised as follows. In Section \ref{sec:prelim}, we establish some preliminaries on free-boundary curve shortening flow, and in Section \ref{sec:extended-chord-arc} we define our reflected (and extended) chord-arc profile. In Section \ref{sec:spatial variation}, we establish first and second derivative conditions on the extended chord-arc profile at a spatial minimum. We compute the time derivative of the chord-arc profile in Section \ref{sec:evolution}, and then use it, in conjunction with the spatial derivative conditions, to establish our extended chord-arc estimate. In Section \ref{sec:grayson}, we deduce Theorem \ref{thm:fbGrayson}, via two different blow-up methods (``intrinsic'' and ``extrinsic''). Finally, in Section \ref{sec:unbounded}, we discuss how our chord-arc estimates may be applied to free boundary curve shortening flows (in an unbounded convex domain) with one free boundary point and one end asymptotic to a ray.

\subsection*{Acknowledgements}
The project originated while M.L. was visiting, and while J.Z. was a research fellow at, the Australian National University. Both authors would like to thank Ben Andrews and the ANU for their generous support. 

M.L. was supported by the Australian Research Council (Grant DE200101834). J.Z. was supported in part by the Australian Research Council under grant FL150100126 and the National Science Foundation under grant DMS-1802984. 

We are grateful to Dongyeong Ko for pointing out an issue with the barrier function in an earlier version of this article.

\section{Free boundary curve shortening flow}
\label{sec:prelim}

Let $\Omega$ be closed domain in $\R^2$ with non-empty interior and $C^2$ boundary $\pd\Omega$. We shall often use the notation $S\doteqdot \pd\Omega$ and denote interiors by $\mathring{\Omega}$ and so forth. A family $\{\Gamma_t\}_{t\in I}$, of connected properly immersed curves-with-boundary $\Gamma_t$ satisfies \emph{free boundary curve shortening flow} in $\Omega$ if $\mathring{\Gamma_t}\subset\mathring{\Omega}$ and 
$\pd\Gamma_t\subset\pd\Omega$ for all $t\in I$, and there is a 1-manifold $M$ and a smooth family of immersions $\gamma:M\times I\to \Omega$ of $\Gamma_t=\gamma(M,t)$ such that
\begin{equation}\label{eq:fb-csf}
\left\{\begin{aligned}\pd_t\gamma={}&-\kappa N\;\;\text{in}\;\; \mathring{M}\times I%\Gamma_t\setminus\pd\Gamma_t
\\ \inner{N}{N^S}={}&0\;\;\text{on}\;\;\pd M\times I\,,%\pd\Gamma_t\,,
\end{aligned}\right.
\end{equation}
where $\kappa(\cdot,t)$ is the curvature of $\Gamma_t$ with respect to the unit normal field $N(\cdot, t)$ and $N^S$ is the outward unit normal to $S=\pd\Omega$ along $\gamma|_{\pd M\times I}$. We will work with the unit tangent vectors $T\doteqdot JN$ and $T^S\doteqdot JN^S$, where $J$ is the counterclockwise rotation by $\frac{\pi}{2}$ in $\mathbb{R}^2$. Up to a reparametrization, we may arrange that $T=\frac{\gamma'}{|\gamma'|}$.

We will consider the setting where $\gamma(\cdot,t)$ are embeddings (a condition which is preserved under the flow) and $\Omega$ is convex. In general, the curves $\Gamma_t$ could be either bounded or unbounded and could have zero, one or two endpoints. If $\pd\Gamma_t=\emptyset$, the work of Gage--Hamilton \cite{GageHamilton86}, Grayson \cite{Grayson} and Huisken \cite{Huisken96} provides a complete description of the flow (upon imposing mild conditions at infinity in case the $\Gamma_t$ are unbounded). Our primary interest is therefore those cases in which the $\Gamma_t$ have either one or two boundary points. %(i.e. $M\simeq [0,\infty)$ or $M\simeq [0,1]$).  %(and $M^1$ could have any topology, $S^1$, $\R$, $[0,\infty)$, or $[0,1]$).
In the latter case, the timeslices $\Gamma_t$ are compact, and solutions will always remain in some compact subset $K$ of $\Omega$. (Indeed, since $\Omega$ is convex, we can enclose the initial curve $\Gamma_0$ by suitable half-lines or chords which meet $\pd\Omega$ in acute angles with respect to the side on which $\Gamma_0$ lies; these act as barriers for the flow.)

Finally, since $\Omega$ is taken to be convex, there is no loss of generality in assuming that $\Gamma_t$ does not touch $\pr \Omega$ at interior points: the strong maximum principle ensures that interior touching cannot occur at interior times, unless $\Gamma_t=\pd\Omega$ for all $t$ and $\pd\Omega$ is flat.

%Note also that the above definition makes sense even if $\Omega$ is non-convex, $\pd\Omega$ is merely of class $C^1$, and the embeddings $\gamma(\cdot,t)$ are merely immersions; our main results will require the slightly higher regularity of $\pd\Omega$, and fail without embeddedness of the timeslices $\Gamma_t$ and convexity of $\Omega$, however. 

\section{Extending the chord-arc profile}
\label{sec:extended-chord-arc}

Recall that the (``classical") \emph{chord-arc profile} \cite{EGF} (cf. \cite{AndrewsBryan,Huisken96}) $\psi_\Gamma$ of an embedded planar curve $\Gamma$ is defined to be
\[
\psi_\Gamma(\delta) \doteqdot \inf \{d(x,y) : x,y \in \Gamma,\ell(x,y) = \delta\}\,,
\] 
where $d(x,y	)$ is the chordlength (Euclidean distance) and $\ell(x,y)$ the arclength between the points $x$ and $y$. %In order to adapt the Andrews--Bryan analysis to the free boundary setting, we need to extend the chord-arc profile across any boundary points of $\Gamma$. 

%\subsection{The (reflected) chord-arc profile}
%{\color{red}[We don't really need ``relative'' versions: an estimate of the form $d\ge L\varphi(\ell/L)$ implies $\psi(\delta)\ge L\varphi(\delta/L)$.]}

\subsection{The (reflected) profile}

For curves $\Gamma$ embedded in a convex planar domain $\Omega$ with nontrivial boundary $\pd\Gamma$ on $\pr\Omega$, we introduce an ``extended'' chord-arc profile as follows: first, we define the \emph{reflected distance} $\tilde{d}(x,y)$ between two points $x,y$ in $\Omega$  (or \emph{reflected chordlength} if $x,y\in\Gamma$) by 
\[\tilde{d}(x,y) = \min_{z\in\pr\Omega}\left(|x-z|+|y-z|\right)\] 
and the \emph{reflected arclength} $\tilde\ell(x,y)$ between two points $x,y\in \Gamma$ by 
\[
\tilde\ell(x,y)\doteqdot \min_{s\in \pd\Gamma}\big(\ell(x,s)+\ell(y,s)\big)\,.
\]
The \emph{reflected chord-arc profile} $\tilde\psi_\Gamma$ of $\Gamma$ is then defined by
\[
\tilde\psi_\Gamma(\delta) \doteqdot \inf\left\{\tilde d(x,y): x,y\in \Gamma, \tilde\ell(x,y) =\delta \right\}
\]
and the \emph{extended chord-arc profile} $\bs{\psi}_\Gamma$ is taken to be
\[
\bs{\psi}_\Gamma(\delta) \doteqdot \min\{\psi_\Gamma(\delta),\tilde{\psi}_\Gamma(\delta)\}\,.
\]

Given a parametrisation $\gamma:M\to\Omega$ of $\Gamma$, we may sometimes conflate the functions $d, \ell, \tilde{d},\tilde{\ell}$ with their pullbacks to $M\times M$ by $\gamma$. 

%\subsection{The completed curve and chord-arc profile}
\subsection{The completed curve and profile}

The extended chord-arc profile of an embedded curve-with-endpoints has a natural interpretation on its formal doubling. 

Consider a connected, properly immersed curve-with-boundary $\Gamma$ in a planar set $\Omega$ with endpoints on $\pr\Omega$. Given a parametrisation $\gamma:M\to \Omega$ of $\Gamma$, we define the formal double $\bs{M} = (M\sqcup M) / \pr M$ and write $\bs{x} = (x,\sign(\bs{x}))$ for elements of $\bs{M}$, where $x\in M$ and $\sign(\bs{x})=\pm$ distinguishes to which copy of $M$ it belongs. We also define continuous curve $\bs{\gamma}: \bs{M} \to \Omega$ by $\bs{\gamma}(\bs{x})= \gamma(x)$. 
%we define the \emph{formal double} of $\Gamma$ by $\bs\Gamma\doteqdot (\Gamma\sqcup\Gamma)/\pd\Gamma$. 

%If $\gamma:M\to\Omega$ is a parametrization of $\Gamma$, we equip $\bs\Gamma$ with the parametrization $\bs\gamma:\bs M\to\bs\Omega$ defined by $\bs\gamma(x,i)\doteqdot(\gamma(x),i)$ for each $i=1,2$, where $\bs M\doteqdot (M\sqcup M)/\pd M$ %is the \emph{reflected completion} of $M$ 
%and $\Omega\doteqdot(\Omega\sqcup\Omega)/\pd\Omega$. When there is no ambiguity arising from boundary considerations, we freely identify $(\Gamma,i)$ with $\Gamma$ for each $i=1,2$, and similarly for $(M,i)$, $(\Omega,i)$, and $(\gamma,i)$. Note that, \textcolor{red}{if $\Gamma$ meets $\pr\Omega$ orthogonally? [Not needed: $\Gamma$ admits tangent spaces at its endpoints and these glue together in the right way. I guess the charts need to be specified though.]} $\bs\Gamma$ admits a well-defined tangent space at each point.

Next observe that the arclength function $\ell$ is well-defined on $\bs{M}\times\bs{M}$, and satisfies
\begin{equation}
\bs\ell(\bs{x},\bs{y})\doteqdot \left\{\begin{aligned}\ell(x,y),{}&\;\;\text{if $\sign(\bs{x}) = \sign(\bs{y})$}\\ %x,y\in (\Gamma,1)\;\;\text{or}\;\;x,y\in (\Gamma,1)\\
\tilde{\ell}(x,y), {}&\;\;\text{if $\sign(\bs{x})\neq \sign(\bs{y})$}. %x\in (\Gamma,1),y\in(\Gamma,2)\;\;\text{or}\;\;x\in (\Gamma,2), y(\Gamma,1)
\end{aligned}
\right.
\end{equation}
Similarly, we may define a ``completed chordlength'' function on $\bs{M}\times \bs{M}$ by
\begin{equation}
\bs{d}(\bs{x},\bs{y})\doteqdot \left\{\begin{aligned}d(\gamma(x),\gamma(y)),{}&\;\;\text{if $\sign(\bs{x}) = \sign(\bs{y})$}\\ %x,y\in (\Gamma,1)\;\;\text{or}\;\;x,y\in (\Gamma,1)\\
\tilde{d}(\gamma(x),\gamma(y)), {}&\;\;\text{if $\sign(\bs{x})\neq \sign(\bs{y})$}. %x\in (\Gamma,1),y\in(\Gamma,2)\;\;\text{or}\;\;x\in (\Gamma,2), y(\Gamma,1)
\end{aligned}
\right.
\end{equation}

The \emph{completed chord-arc profile} $\bs{\psi}$ of $\Gamma$ is then defined by
\[
\bs{\psi}(\delta) \doteqdot \inf\left\{\bs{d}(x,y): x,y\in \bs\Gamma, \bs{\ell}(x,y) =\delta \right\}. 
\]
Note that this coincides with the notion of extended chord-arc profile defined above.

\begin{remark}
\label{rmk:gluing}
The formal double $\bs{M}$ has an obvious smooth structure% by identifying one copy of the interval $M$ with a reflected interval near each endpoint
, and the arclength $\bs{\ell}$ is smooth with respect to this structure. Furthermore, if $\Gamma$ contacts $\pr\Omega$ orthogonally, then the completed chordlength $\bs{d}$ is essentially $C^1$ on $\bs{M}$. %\textcolor{red}{(probably $C^2$? but i wanted to emphasise the first derivative)}. %This $C^1$ gluing was an important motivation for justifying the conditions at a minimum of the chord-arc profile, although we have presented their proof in a more direct and precise manner; see Lemma \ref{lem:C1-endpoints} in particular.
This $C^1$ gluing is basically what is needed to guarantee first derivative conditions at minima of the chord-arc profile, although we have presented their proof in a more direct and precise manner; see Lemma \ref{lem:C1-endpoints} in particular. 
\end{remark}

\subsection{Variation of the (reflected) chordlength}

It will be convenient to introduce some notation (see also Figure \ref{fig:angles} below). Given $x,y\in \mathring{\Omega}$ and $z\in S=\pr\Omega$, we define the angles $\theta_{x},\theta_{y}\in \R/2\pi\Z$ by
\[
\frac{x-z}{|x-z|} = \cos\theta_{x}N^S_z + \sin\theta_{x}T^S_z 
\] 
and
\[
\frac{y-z}{|y-z|} = \cos\theta_{y}N^S_z + \sin\theta_{y}T^S_z\,.
\] 
Note that, due to our convention for $N^S$, we have $\kappa^S >0$ and $\cos\theta_{x},\cos\theta_{y} <0$. 

If $x\ne y$, then, given unit vectors $X,Y$ in $\R^2$, we may further define the angles $\alpha_x^X, \alpha_y^Y$, $\beta_x^X$, and $\beta_y^Y$ by 
\[
\frac{x-y}{|x-y|} = \cos \alpha_x^X (-JX) + \sin \alpha_x^X X = \cos \alpha_y^Y (-JY) + \sin\alpha_y^Y Y
\]
%\textcolor{red}{to be maximally symmetric, the rightmost expression should have a minus sign}
\[
\frac{x-z}{|x-z|} = \cos \beta_x^X (-JX) + \sin \beta_x^X X,
\] 
and
\[
\frac{y-z}{|y-z|} = \cos\beta_y^Y (-JY) + \sin\beta_y^Y Y. 
\] 
Note that 
\[
\langle X,Y\rangle = \cos(\alpha_x^X-\alpha_y^Y)\;\;\text{and}\;\; \inner{JX}{Y}=\sin(\alpha_x^X-\alpha_y^Y)\,,
\]
%{\color{red}In fact,
%\[
%Y=\cos(\alpha_x^X-\alpha_y^Y)X+\sin(\alpha_x^X-\alpha_y^Y)JX\;\;\text{and}\;\; X=\cos(\alpha_y^Y-\alpha_x^X)Y+\sin(\alpha_y^Y-\alpha_x^X)JY
%\]}
and also that
\[
\langle X,T^S_z\rangle = \cos(\beta_x^X-\theta_{x})\;\; \text{and} \;\;\langle Y,T^S_z\rangle = \cos(\beta_y^Y-\theta_{y})\,.%=\cos(\pi-\beta_x^X-\beta_x^X^S)=-\cos(\beta_x^X+\beta_x^X^S)\,.
\] 

We emphasise that the subscripts $x$ or $y$ appear only to distinguish which vectors each angle is defined by; in particular, each of the angles defined above may depend on $x,y,z$ (and $X,Y$). 

%Finally, we shall denote by $R_z(v) \doteqdot v-2 \langle v,T^S_z\rangle T^S_z$ the reflection of a vector $v$ across the line $\R N^S_z$.

The regularity of the distance function is well-established. Its first and second variations are given as follows \cite{AndrewsBryan,Huisken96}.

\begin{proposition}
\label{prop:d}
Denote by $\Delta = \{(x,x) : x\in \Omega\}$ the diagonal in $\Omega\times \Omega$. The distance $d$ is continuous on ${\Omega}\times {\Omega}$ and smooth on $({\Omega}\times {\Omega})\setminus \Delta$. Moreover, given $(x,y) \in ({\Omega}\times {\Omega})\setminus \Delta$ and unit vectors $X,Y$ in $\mathbb{R}^2$, we have
\[\pr^x_Xd= \inner{\frac{x-y}{d}}{X} = \sin \alpha_x^X, \]
\[\pr^y_Y d= \inner{\frac{y-x}{d}}{Y} = - \sin\alpha_y^Y , \]
\[\pr^x_X\pr^x_Xd=  \frac{1}{d}  - \frac{1}{d} \inner{\frac{x-y}{d}}{X}^2 = \frac{1}{d} \cos^2 \alpha_x^X,  \]
\[\pr^y_Y\pr^y_Yd= \frac{1}{d} - \frac{1}{d} \inner{\frac{y-x}{d}}{Y}^2 = \frac{1}{d} \cos^2 \alpha_y^Y  ,\]
\[\pr^x_X\pr^y_Yd= -\frac{1}{d}\langle X,Y\rangle - \frac{1}{d} \inner{\frac{x-y}{d}}{X}\inner{\frac{y-x}{d}}{Y}   = -\frac{1}{d} \cos \alpha_x^X \cos \alpha_y^Y\,.\]
\end{proposition}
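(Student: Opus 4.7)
All of the identities follow by direct differentiation of the Euclidean norm together with a bookkeeping step that rewrites the inner products in terms of the angles $\alpha^X_x, \alpha^Y_y$. The plan is to first settle the regularity, then extract first derivatives by hand, and finally obtain second derivatives by applying the quotient rule to the first-derivative expressions.

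For the regularity statement, I would note that continuity on all of $\Omega\times\Omega$ is immediate from the triangle inequality, while smoothness off the diagonal follows because $d^2(x,y)=|x-y|^2$ is a polynomial and $d=\sqrt{d^2}$ is positive away from $\Delta$.

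For the first variations, I would compute
\[
\partial^x_X d = \tfrac{d}{dt}\big|_{t=0}|x+tX-y| = \left\langle\tfrac{x-y}{d},X\right\rangle,\qquad \partial^y_Y d = \left\langle\tfrac{y-x}{d},Y\right\rangle,
\]
and then convert to angles using the defining identity $\tfrac{x-y}{d} = \cos\alpha^X_x(-JX) + \sin\alpha^X_x\, X$ (and its $Y$-analogue), which immediately gives $\partial^x_X d = \sin\alpha^X_x$ and $\partial^y_Y d = -\sin\alpha^Y_y$.

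For the second variations, I would differentiate the first-variation formulas via the quotient rule, using that $\partial^x_X d = \langle x-y,X\rangle/d$ (and analogously in $y$). This yields
\[
\partial^x_X\partial^x_X d = \frac{1}{d}-\frac{\langle x-y,X\rangle^2}{d^3},\qquad \partial^x_X\partial^y_Y d = -\frac{\langle X,Y\rangle}{d} + \frac{\langle x-y,X\rangle\langle x-y,Y\rangle}{d^3},
\]
and the same for $\partial^y_Y\partial^y_Y d$. To rewrite these in the stated angular form, I would substitute the first-variation identities and use the angle-addition formula $\langle X,Y\rangle = \cos(\alpha^X_x-\alpha^Y_y) = \cos\alpha^X_x\cos\alpha^Y_y + \sin\alpha^X_x\sin\alpha^Y_y$; the $\sin\sin$ term cancels the one produced by the product $\langle x-y,X\rangle\langle x-y,Y\rangle/d^3$, leaving the clean $-\tfrac{1}{d}\cos\alpha^X_x\cos\alpha^Y_y$ (and Pythagoras $1-\sin^2=\cos^2$ handles the pure second derivatives).

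There is no real obstacle; the whole proposition is a calculation. The only care required is keeping track of signs and directions when passing between the vector form (with $\tfrac{x-y}{d}$ vs.\ $\tfrac{y-x}{d}$) and the angular form, which is why it is useful to record both expressions side-by-side in the statement.
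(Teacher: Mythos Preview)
Your computation is correct and is exactly the standard argument. The paper does not actually supply a proof of this proposition: it prefaces the statement by noting that the regularity of the distance function is well-established and cites \cite{AndrewsBryan,Huisken96} for the variation formulas, so there is nothing to compare against beyond confirming that your direct differentiation matches the stated identities, which it does.
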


\begin{comment}
\begin{proposition}
\label{prop:d}
The distance $d$ is continuous on ${\Omega}\times {\Omega}$ and smooth on $({\Omega}\times {\Omega})\setminus \Delta$. Moreover, given $(x,y) \in ({\Omega}\times {\Omega})\setminus \Delta$ and unit vectors $X,Y$, we have
\[ \pr^x_X d= \langle\frac{x-y}{d}, X\rangle = \sin \alpha_x^X, \]
\[ \pr^y_Y d= \langle \frac{y-x}{d}, Y\rangle = \sin\alpha_y^Y , \]
\[ (\pr^x_X)^2 d=  \frac{1}{d}  - \frac{1}{d} \langle \frac{x-y}{d},X\rangle^2 = \frac{1}{d} \cos^2 \alpha_x^X,  \]
\[ (\pr^y_Y)^2 d= \frac{1}{d} - \frac{1}{d} \langle \frac{y-x}{d},Y\rangle^2 = \frac{1}{d} \cos^2 \alpha_y^Y  ,\]
\[ \pr^x_X \pr^y_Y d= \frac{1}{d}\langle X,Y\rangle - \frac{1}{d} \langle \frac{x-y}{d}, X\rangle \langle \frac{y-x}{d}, Y\rangle   = \frac{1}{d} \cos \alpha_x^X \cos \alpha_y^Y\]
\end{proposition}
\end{comment}

\begin{lemma}[Snell's law]\label{lem:snell}
Given any $(x,y)\in\mathring{\Omega}\times \mathring{\Omega}$ there exists $z\in \pr \Omega$ such that
\begin{equation*}
\tilde{d}(x,y) = d(x,z) + d(y,z)\,.%\doteqdot  d_x+d_y
\end{equation*}
The triple $(x,y,z)$ necessarily satisfies
\begin{equation*}
\theta_{x}+\theta_{y}=0\,.
\end{equation*}
\end{lemma}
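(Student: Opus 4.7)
The plan is to establish existence of a minimiser $z$ for the function $f(z)\doteqdot d(x,z)+d(y,z)$ on $\partial\Omega$, and then extract the Snell relation from the first-order optimality condition.

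For existence, observe that $f$ is continuous on the closed set $S=\partial\Omega$ and, by the reverse triangle inequality, $f(z)\geq 2|z|-|x|-|y|$, so $f$ is coercive. In the bounded case $\partial\Omega$ is already compact, and in the unbounded case the infimum over $\partial\Omega$ agrees with the infimum over $\partial\Omega\cap\overline{B_R(0)}$ for $R$ sufficiently large (in terms of the value of $f$ at some reference point), hence is attained. Call a minimiser $z\in\partial\Omega$.

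Since $x,y\in\mathring{\Omega}$, we have $d(x,z),d(y,z)>0$, so $f$ is smooth in a neighbourhood of $z$, and since $\Omega$ is two-dimensional with $C^2$ boundary, $\partial\Omega$ is a smooth 1-manifold without boundary. Parametrise $\partial\Omega$ near $z$ by arclength so that the tangent vector at $z$ is $T^S_z$. Differentiating along this parametrisation and using Proposition \ref{prop:d} (with $\partial\Omega$ providing the direction of variation in the $z$ slot of $d(x,\cdot)$ and $d(y,\cdot)$),
\[
0=\frac{d}{ds}\bigg|_{s=0}f(z(s)) = -\inner{\frac{x-z}{|x-z|}}{T^S_z}-\inner{\frac{y-z}{|y-z|}}{T^S_z}=-\sin\theta_x-\sin\theta_y\,,
\]
so that $\sin\theta_x+\sin\theta_y=0$.

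Finally, the paper's sign convention forces $\cos\theta_x,\cos\theta_y<0$, so both $\theta_x,\theta_y$ lie in $(\tfrac{\pi}{2},\tfrac{3\pi}{2})\pmod{2\pi}$. The equation $\sin\theta_x=-\sin\theta_y$ has the two solution branches $\theta_x\equiv -\theta_y$ and $\theta_x\equiv \pi+\theta_y$ modulo $2\pi$; the latter flips the sign of cosine and so is ruled out by $\cos\theta_x,\cos\theta_y<0$. This yields $\theta_x+\theta_y\equiv 0\pmod{2\pi}$, as required. The only real obstacle is the existence statement in the unbounded case, which is immediate from coercivity; the trigonometric conversion is then routine given the prescribed sign conventions.
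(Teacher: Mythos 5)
Your proposal is correct and follows essentially the same route as the paper: attain the minimum of $d(x,\cdot)+d(y,\cdot)$ on $\partial\Omega$ by a compactness/coercivity argument, apply the first-derivative test to get $\sin\theta_x+\sin\theta_y=0$, and use convexity to select the branch $\theta_x=-\theta_y$. Your explicit elimination of the branch $\theta_x\equiv\pi+\theta_y$ via the sign condition $\cos\theta_x,\cos\theta_y<0$ is precisely the content of the paper's remark that ``convexity of $\Omega$ then ensures $\theta_x=-\theta_y$,'' so no further comment is needed.
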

\begin{proof}
Since $x$ and $y$ are interior points, the function $d(x,\cdot) + d(y,\cdot)$ is smooth on $\pr\Omega$. It thus attains its minimum over $\pd\Omega$, due to compactness of $\Omega\cap \overline B_R$ for arbitrary (large) $R$. Moreover, at any minimum $z\in\pd\Omega$, the first derivative test gives the reflected angle %(Snell's law) 
condition
\ba\label{eq:snell}
0={}&\pr^z_{T^S_z} (d(x,z) + d(y,z))\nonumber\\
={}&-\inner{\frac{x-z}{|x-z|}}{T^S_z} -\inner{\frac{y-z}{|y-z|}}{T^S_z}\nonumber\\
={}& -\sin \theta_{x} - \sin\theta_{y}.
\ea
Convexity of $\Omega$ then ensures that $\theta_{x} =- \theta_{y}$ (mod $2\pi$).
\end{proof}

\section{Spatial variation of the chord-arc profile}\label{sec:spatial variation}

%\section{Spatial variation}

For the purposes of computing spatial variations it will be convenient to restrict attention to a fixed simple closed interval $\Gamma$ in $\Omega$ which meets $\pd\Omega$ orthogonally. Throughout this section, we may assume without loss of generality that $\gamma: M\to \Omega$ is a unit speed parametrisation of $\Gamma$, and $M=[0,L]$. As in \cite[Chapter 3]{EGF}, we will control the chord-arc profile by a smooth to-be-determined function $\varphi\in C^{\infty}([0,1])$ satisfying the following properties.
\begin{enumerate}[(i)]
\item $\varphi(1-\zeta) = \varphi(\zeta)$ for all $\zeta\in[0,1]$.
\item $|\varphi'|<1$.
\item $\varphi$ is strictly concave.
\end{enumerate}

Note that, since $\varphi$ is smooth and symmetric about $\zeta=\frac{1}{2}$, the function $\varphi(\frac{\bs\ell}{\bs{L}})$ is smooth away from the diagonal $\bs D$ in $\bs M\times \bs M$. %Moreover, $\varphi(\frac{\tilde{\ell}}{\bs{L}})$ is a lower bound for the completed profile $\bs{\psi}$ precisely when $\bs{Z} \geq 0$. 
The following observation about such functions will be useful.

\begin{lemma}[{\cite[Lemma 3.14]{EGF}}]
Let $\varphi\in C^{\infty}([0,1])$ be any function satisfying properties (i)-(iii) above. For all $\zeta \in [0,\frac{1}{2})$, we have $\varphi'(\zeta)>0$ and $\varphi(\zeta)-\zeta \varphi' (\zeta)>0$. 
\end{lemma}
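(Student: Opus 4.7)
My plan is to prove the two assertions in turn, using the symmetry (i) and the strict concavity (iii) as the main tools.

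For $\varphi'(\zeta)>0$ on $[0,\tfrac{1}{2})$: differentiating the symmetry relation (i) gives $\varphi'(1-\zeta)=-\varphi'(\zeta)$, and setting $\zeta=\tfrac{1}{2}$ forces $\varphi'(\tfrac{1}{2})=0$. Strict concavity makes $\varphi'$ strictly decreasing on $[0,1]$, so $\varphi'(\zeta)>\varphi'(\tfrac{1}{2})=0$ for every $\zeta\in[0,\tfrac{1}{2})$.

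For $\varphi(\zeta)-\zeta\varphi'(\zeta)>0$: the cleanest route is the tangent line inequality for strictly concave functions. For any $\zeta_0\in(0,\tfrac{1}{2})$, strict concavity gives
\[
\varphi(0)<\varphi(\zeta_0)+\varphi'(\zeta_0)(0-\zeta_0)=\varphi(\zeta_0)-\zeta_0\varphi'(\zeta_0),
\]
so $\varphi(\zeta_0)-\zeta_0\varphi'(\zeta_0)>\varphi(0)$. Equivalently, setting $g(\zeta)\doteqdot\varphi(\zeta)-\zeta\varphi'(\zeta)$ one computes $g'(\zeta)=-\zeta\varphi''(\zeta)\geq 0$ by concavity, so $g$ is nondecreasing on $[0,\tfrac{1}{2}]$ with $g(0)=\varphi(0)$.

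In either presentation the conclusion reduces to nonnegativity of $\varphi(0)$, which is the main technical point: this is supplied by the normalisation under which $\varphi$ arises as a chord-arc comparison function (in applications one compares $\psi_\Gamma\geq 0$, which vanishes at $0$, to $\varphi$), and combined with strict concavity it upgrades to the strict inequality on $[0,\tfrac{1}{2})$ asserted in the lemma. Condition (ii) is not needed for either of the two conclusions, though it is essential for the later comparison argument in which this lemma is used.
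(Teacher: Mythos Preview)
The paper does not prove this lemma; it simply cites \cite[Lemma 3.14]{EGF}. Your argument is the standard one, and your proof of $\varphi'>0$ on $[0,\tfrac12)$ is correct and complete.

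For the second assertion you have correctly identified a genuine gap in the hypotheses as stated: conditions (i)--(iii) alone do \emph{not} imply $\varphi(\zeta)-\zeta\varphi'(\zeta)>0$. For instance, $\varphi(\zeta)=-1-\tfrac{1}{10}(\zeta-\tfrac12)^2$ satisfies (i)--(iii) yet has $\varphi-\zeta\varphi'<0$ everywhere on $[0,1]$. Your reduction via the tangent-line inequality (equivalently, via $g'=-\zeta\varphi''\ge 0$) to the condition $\varphi(0)\ge 0$ is exactly right, and your observation that this normalisation is always satisfied in the applications (where $\varphi$ is compared against a chord-arc profile vanishing at $0$) is on the mark; in the cited source this is an explicit hypothesis.

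One small correction: your final sentence overstates slightly. With $\varphi(0)\ge 0$ and strict concavity you obtain $\varphi(\zeta)-\zeta\varphi'(\zeta)>\varphi(0)\ge 0$ for $\zeta\in(0,\tfrac12)$, but at $\zeta=0$ the expression equals $\varphi(0)$ exactly, so the inequality is not strict there unless $\varphi(0)>0$. Since the barriers actually used in this paper satisfy $\varphi(0)=0$, the correct conclusion is strict positivity on $(0,\tfrac12)$; this is all that is needed downstream, as the comparison argument in Proposition~\ref{prop:comparison equation} is only applied at off-diagonal minima where $\zeta=\bs\ell/\bs L>0$.
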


We proceed to consider the auxiliary functions on $M\times M$ given by
\[ Z(x,y) = d(\gamma(x),\gamma(y)) - \bs{L} \varphi\left(\frac{\ell(x,y)}{\bs{L}}\right), \]
\[ \tilde{Z}(x,y) = \tilde{d}(\gamma(x),\gamma(y)) - \bs{L} \varphi\left(\frac{\tilde{\ell}(x,y)}{\bs{L}}\right), \]
and the auxiliary function on $M\times M\times S$ given by
\[\bar{Z}(x,y,z) = d(\gamma(x) ,z) + d(\gamma(y) ,z) - \bs{L} \varphi\left(\frac{\tilde{\ell}(x,y)}{\bs{L}}\right).\] 
Note that $\tilde{Z}(x,y) = \min_{z\in\pd\Omega}\bar{Z}(x,y,z)$. Our completed two-point function on $\bs{M}\times \bs{M}$ is defined by 
\begin{equation}\label{eq:bs Z}
\bs{Z}(\bs{x},\bs{y})  = \bs{d}(\bs{x}, \bs{y}) - \bs{L} \varphi\left(\frac{\bs{\ell}(\bs{x},\bs{y})}{\bs{L}}\right) = \begin{cases} Z(x, y)\;\;\text{if} & \sign(\bs{x})=\sign(\bs{y}) \\ \tilde{Z}(x,y)\;\;\text{if} &  \sign(\bs{x})\neq\sign(\bs{y})\,.\end{cases}
\end{equation}

Let us define (as functions of $x,y \in M$ and $z\in S=\pd\Omega$) the angles $\alpha_x = \alpha_{\gamma(x)}^{T_x}$, $\alpha_y\doteqdot \alpha_{\gamma(y)}^{T_y}$, $\beta_x\doteqdot\beta_{\gamma(x)}^{T_x}$, $\beta_y\doteqdot\beta_{\gamma(y)}^{T_y}$, and (in a slight abuse of notation) $\theta_x\doteqdot\theta_{\gamma(x)}$ and $\theta_y\doteqdot\theta_{\gamma(y)}$. In particular, we have
\begin{equation}
\label{eq:alpha}
\frac{\gamma(x)-\gamma(y)}{|\gamma(x)-\gamma(y)|} = \cos \alpha_{x} N_{x} + \sin \alpha_{x} T_{x} = \cos \alpha_{y} N_{y} + \sin\alpha_{y} T_{y}\,,
\end{equation}
%Define also the angles $\theta_{x}$, $\theta_{y}$, $\beta_{x}$, $\beta_{y}$ by {\color{red}[Why not take define $\theta_x\doteqdot \theta_{\gamma(x),z}, \beta_x\doteqdot\beta_{\gamma(x),T_x}$, etc?]}
\begin{equation}
\label{eq:beta-x}
\frac{\gamma(x)-z}{|\gamma(x)-z|} = \cos \beta_{x} N_{x} + \sin \beta_{x} T_{x} = \cos \theta_{x} \, N^S_z + \sin\theta_{x} \, T^S_z
\end{equation}
and
\begin{equation}
\label{eq:beta-y}
\frac{\gamma(y)-z}{|\gamma(y)-z|} = \cos\beta_{y} N_{y} + \sin\beta_{y} T_{y} = \cos \theta_{y} \, N^S_z + \sin\theta_{y} \, T^S_z. 
\end{equation}

\begin{figure}
\centering
\includegraphics[width=0.54\textwidth]{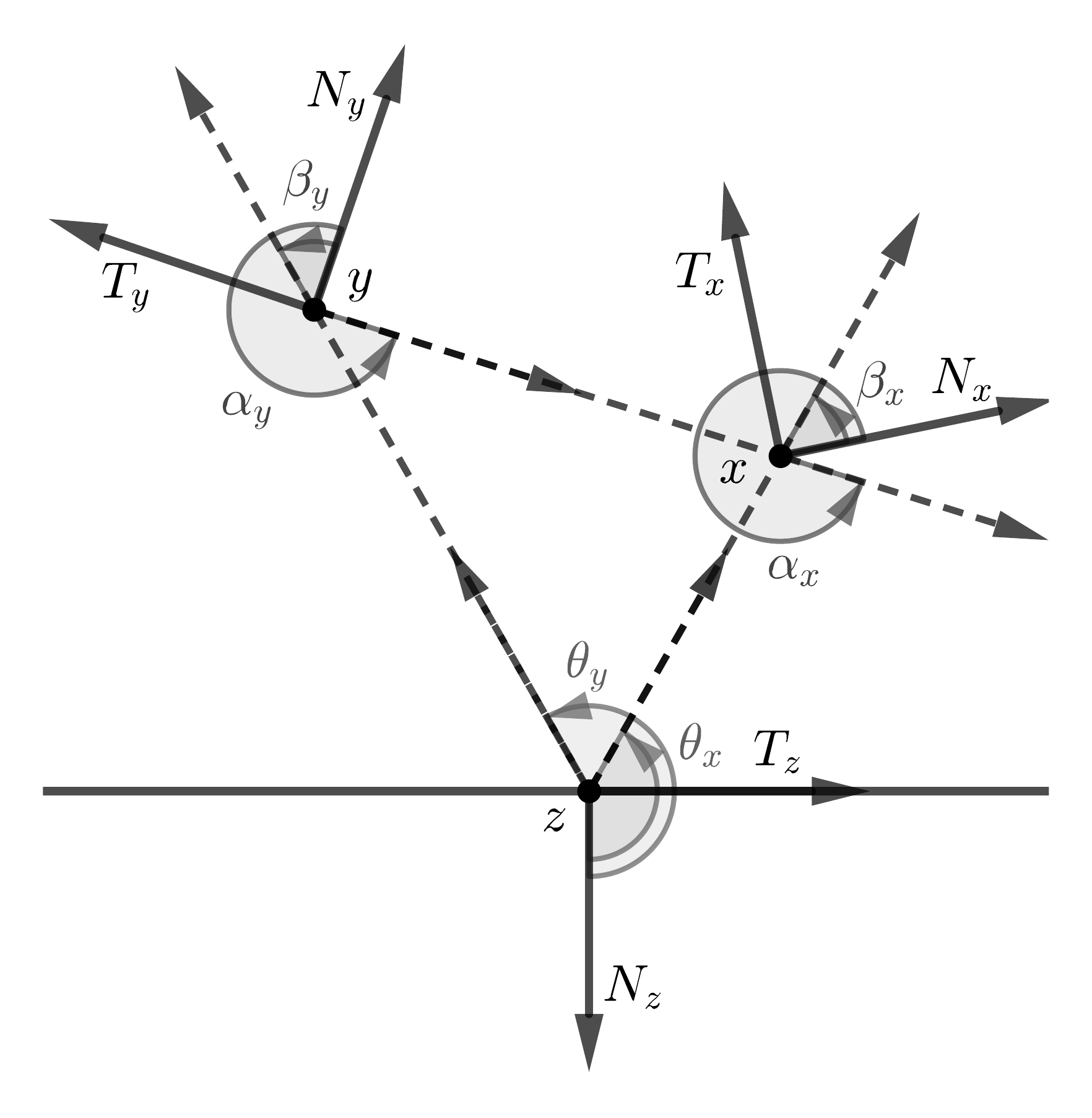}
\caption{The angles $\alpha_x,\alpha_y,\beta_x,\beta_y,\theta_x,\theta_y$, which depend on the configuration of $\gamma$ at $x,y$ and the boundary point $z$.}\label{fig:angles}
\end{figure}

%\subsection{Vanilla two-point function}
\subsection{Classical profile}

We first calculate an outcome of the second derivative test at an (unreflected) minimum where the first derivative vanishes. (Note that we include the vanishing of the first derivatives as hypotheses, to account for the endpoints; these of course hold automatically at an interior minimum.) Denote by $D\doteqdot\{(x,x):x\in M\}$ the diagonal in $M\times M$.

\begin{proposition}
Suppose that $0=Z(x,y) = \min_{M\times M}Z$ and that $\pr_x Z(x,y) = \pr_y Z(x,y)=0$ for some $(x,y)\in (M\times M)\setminus D$. 
At $(x,y)$, we have
\[
\alpha_x+ \alpha_y=\pi
\] 
and 
\begin{equation}
  0\leq (\pr_x - \pr_y)^2 Z = \frac{4}{d}(1-\varphi'^2) - 4\frac{\varphi''}{\bs{L}} - \kappa_x\cos\alpha_x + \kappa_y\cos\alpha_y.
\end{equation}
\end{proposition}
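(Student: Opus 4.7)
The plan is a standard two-point maximum-principle argument: extract the first-order information from $(x,y)$ being a critical minimum of $Z$ on $(M\times M)\setminus D$, then apply the second-derivative test along the antisymmetric variation $(1,-1)$.

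For the first-order analysis, I would evaluate $\pr_x Z$ and $\pr_y Z$ using Proposition \ref{prop:d} with $X=T_x$ and $Y=T_y$, together with the fact that $\ell(x,y)=|y-x|$ under the unit-speed parametrisation (so $\pr_x\ell=\mp 1$, $\pr_y\ell=\pm 1$). Setting both first derivatives to zero forces $\sin\alpha_x = \sin\alpha_y = -\varphi'(\ell/\bs{L})$. Hence either $\alpha_x=\alpha_y$ or $\alpha_x+\alpha_y=\pi$. The first option would make the tangent frames at $\gamma(x)$ and $\gamma(y)$ coincide; I would rule it out using embeddedness of $\Gamma$ together with $Z=0$ at the minimum: the corresponding configuration (both tangents and normals aligned, with $d = \bs{L}\varphi(\ell/\bs{L}) < \ell$ by $|\varphi'|<1$ and strict concavity) is geometrically inconsistent with a simple embedded interval sitting in a convex $\Omega$. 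This leaves $\alpha_x+\alpha_y=\pi$, so $\cos\alpha_y=-\cos\alpha_x$ and $\cos^2\alpha_x=1-\varphi'^2$.

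For the second-order analysis, I would assemble the Hessian of $Z$ on $M\times M$. The Hessian of $d$ along $\gamma$ comes from Proposition \ref{prop:d} combined with curvature corrections produced by $\gamma''=-\kappa N$; schematically,
\[
\pr_x^2 d = \tfrac{1}{d}\cos^2\alpha_x - \kappa_x\cos\alpha_x,\quad \pr_y^2 d = \tfrac{1}{d}\cos^2\alpha_y + \kappa_y\cos\alpha_y,\quad \pr_x\pr_y d = -\tfrac{1}{d}\cos\alpha_x\cos\alpha_y,
\]
while the Hessian of $\bs{L}\varphi(\ell/\bs{L})$ is $\varphi''/\bs{L}$ on the pure partials and $-\varphi''/\bs{L}$ on the mixed one. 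Assembling $(\pr_x-\pr_y)^2 Z$ and substituting the first-order identities $\sin^2\alpha_x=\sin^2\alpha_y=\varphi'^2$ together with $\cos\alpha_y=-\cos\alpha_x$ should collapse the trigonometric combination to the claimed form $\tfrac{4}{d}(1-\varphi'^2)-4\varphi''/\bs{L}-\kappa_x\cos\alpha_x+\kappa_y\cos\alpha_y$. The inequality $\geq 0$ is then the second-derivative test for $Z$ along the curve $t\mapsto(x+t,y-t)$, whose admissibility even when $(x,y)$ sits on $\pr(M\times M)$ is ensured by the standing hypothesis $\pr_x Z=\pr_y Z=0$ (which in applications is delivered by the orthogonal contact $\Gamma\perp\pr\Omega$).

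I expect the main obstacle to be the exclusion of the degenerate case $\alpha_x=\alpha_y$ in the first step: the calculus conditions alone admit both configurations, so a genuinely geometric argument invoking embeddedness and the sign constraints on $\theta_x,\theta_y$ imposed by convexity of $\Omega$ is needed. Carefully handling boundary minima, where only one-sided variations in $x$ or $y$ are a priori available, is a secondary subtlety that the hypothesis $\pr_x Z=\pr_y Z=0$ is designed to manage.
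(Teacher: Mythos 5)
Your outline follows the paper's route step for step (first-derivative test giving $\sin\alpha_x=\sin\alpha_y=-\varphi'$, the dichotomy $\alpha_x=\alpha_y$ versus $\alpha_x+\alpha_y=\pi$, then the second-derivative test along $(1,-1)$), and your Hessian formulas are correct. But the one step you correctly identify as the crux --- excluding $\alpha_x=\alpha_y$ --- is not actually carried out: ``geometrically inconsistent with a simple embedded interval sitting in a convex $\Omega$'' is an assertion, not an argument, and the configuration $T_x=T_y$, $N_x=N_y$ is perfectly realisable by an embedded interval (think of an S-shaped curve), so embeddedness alone cannot rule it out. Moreover, the mechanism you gesture at (sign constraints on $\theta_x,\theta_y$ from convexity) is off-target: $\theta_x,\theta_y$ are defined relative to a boundary point $z\in\pd\Omega$ and play no role in the unreflected profile. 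The argument that actually works (and is the one the paper uses, following \cite[Lemma 3.13]{EGF}) is an intermediate-value argument: if $\alpha_x=\alpha_y$, then the chord $\sigma$ from $\gamma(x)$ to $\gamma(y)$ leaves $\gamma(x)$ into $\Omega_{\sign(\cos\alpha_x)}$ and leaves $\gamma(y)$ into $\Omega_{-\sign(\cos\alpha_y)}$, i.e.\ into \emph{opposite} components of $\Omega\setminus\Gamma$, so $\sigma$ must meet $\Gamma$ at a third point $\gamma(u)$; then $d(x,u)+d(u,y)=d(x,y)$ while $\ell(x,u)+\ell(u,y)$ equals $\ell(x,y)$ or $2L-\ell(x,y)$, and the strict concavity and symmetry of $\varphi$ force $Z(x,u)<0$ or $Z(u,y)<0$, contradicting $\min Z=0$. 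Note that this uses the full hypothesis that the minimum value is \emph{zero}, not merely that $(x,y)$ is a critical minimum.

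A secondary point: the trigonometric combination does not ``collapse to the claimed form'' with the $\tfrac{4}{d}(1-\varphi'^2)$ term present. One has $(\pr_x-\pr_y)^2Z=\tfrac{1}{d}(\cos\alpha_x+\cos\alpha_y)^2-4\varphi''/\bs{L}-\kappa_x\cos\alpha_x+\kappa_y\cos\alpha_y$, and the first term \emph{vanishes} once $\alpha_x+\alpha_y=\pi$; the term $\tfrac{4}{d}(1-\varphi'^2)=\tfrac{1}{d}(\cos\alpha_x-\cos\alpha_y)^2$ belongs to $(\pr_x+\pr_y)^2Z$, where the $\varphi''$ contributions cancel. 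The stated inequality survives because the extra term is nonnegative, but the displayed equality does not hold as written, and your derivation should make clear which combination produces which term.
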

\begin{proof}
Recall that $\Gamma$ is parametrised by arclength. By symmetry in $x,y$, we may also assume $x<y$, so that $\ell(x,y) = y-x$; in particular, $\pr_x\ell=-1$ and $\pr_y\ell=1$. Then $\pr_y \varphi=-\pr_x\varphi = \frac{1}{\bs{L}}\varphi'$ and hence, by Proposition \ref{prop:d}, 
\[ 0= \pr_x Z = \inner{\frac{\gamma(x)-\gamma(y)}{|\gamma(x)-\gamma(y)|}}{T_x} + \varphi' = \sin\alpha_{x} + \varphi',\]
 \[0=\pr_y Z=\inner{\frac{\gamma(y)-\gamma(x)}{|\gamma(y)-\gamma(x)|}}{T_y} - \varphi'=  -\sin\alpha_{y} - \varphi'.\]
Therefore $\sin \alpha_{x} =  \sin \alpha_{y}=-\varphi'$ and hence either $\alpha_{x}=\alpha_{y}$ or $\alpha_{x}+\alpha_{y}=\pi$. In fact, since the minimum is zero, only the latter case can occur:

\begin{claim}\label{claim:angle condition vanilla}
$\alpha_x+\alpha_y=\pi$. 
\end{claim}
\begin{proof}[Proof of Claim \ref{claim:angle condition vanilla}]
We argue as in \cite[Lemma 3.13]{EGF}. Indeed, let $\sigma$ be the line segment connecting $\gamma(x)$ to $\gamma(y)$ and let $w=\frac{\gamma(x)-\gamma(y)}{|\gamma(x)-\gamma(y)|}$. The curve $\Gamma$ divides the domain $\Omega$ into two connected components $\Omega_\pm$, where $N=N^{\Gamma}$ points towards $\Omega_+$ at all points on $\Gamma_t$. The points in $\sigma$ near $\gamma(x)$ are given by $\gamma(x)- \epsilon w$, and hence lie in $\Omega_{\sign \langle w, N_x\rangle} = \Omega_{\sign(\cos \alpha_x)}$; similarly the points in $\sigma$ near $\gamma(y)$ are given by $\gamma(y)+ \epsilon w$, and hence lie in $\Omega_{\sign \langle w, N_{y}\rangle} =\Omega_{-\sign(\cos\alpha_y)}$. If $\alpha_x=\alpha_y$, then this shows that $\sigma$ contains points on either side of $\Gamma$. In particular, $\sigma$ must intersect $\Gamma$ in a third point $\gamma(u)$. Since $d(x,u)+d(u,y)=d(x,y)$ and either $\ell(x,u)+\ell(u,y)=\ell(x,y)$ or $\ell(x,u)+\ell(u,y)=2L-\ell(x,y)$, the strict concavity of $\varphi$ now implies that either $Z(x,u)<0$ or $Z(y,u)<0$, which contradicts the assumption $\min_{M\times M}Z=0$. 
\end{proof}
 
We proceed to compute $\pr_x^2 \varphi = \pr_y^2 \varphi = -\pr_x\pr_y \varphi = \frac{1}{\bs{L}^2}\varphi''$, and so 
\[\pr_x^2 Z = \frac{1}{d}\cos^2\alpha_x - \inner{\frac{\gamma(x)-\gamma(y)}{|\gamma(x)-\gamma(y)|}}{\kappa_x N_x} - \frac{1}{\bs{L}}\varphi'' = \frac{1}{d}\cos^2\alpha_x -\kappa_x \cos\alpha_x - \frac{1}{\bs{L}}\varphi'' ,\] 
\[\pr_y^2 Z = \frac{1}{d}\cos^2\alpha_y  - \inner{\frac{\gamma(y)-\gamma(x)}{|\gamma(x)-\gamma(y)|}}{\kappa_y N_y}  - \frac{1}{\bs{L}}\varphi'' =  \frac{1}{d}\cos^2\alpha_y +\kappa_y \cos \alpha_y  - \frac{1}{\bs{L}}\varphi'',\]
and
\[\pr_x\pr_y Z = -\frac{1}{d} \cos\alpha_x\cos\alpha_y + \frac{1}{\bs{L}}\varphi''.\]
The second derivative test then gives 
\[
\begin{split}
  0 &\leq (\pr_x \pm \pr_y)^2 Z \\
  &= \frac{1}{d} ( \cos^2 \alpha_{x} + \cos^2 \alpha_{y} \mp 2\cos\alpha_{x}\cos\alpha_{y}) - (2\mp 2)\frac{\varphi''}{\bs{L}} - \kappa_x\cos\alpha_x + \kappa_y\cos\alpha_y
  \\&= \frac{1}{d} (\cos\alpha_{x} \mp \cos \alpha_{y})^2  - (2\mp 2)\frac{\varphi''}{\bs{L}}- \kappa_x\cos\alpha_x + \kappa_y\cos\alpha_y.
  \end{split}
\]

By Claim \ref{claim:angle condition vanilla} we have $(\cos \alpha_{x} +\cos \alpha_{y})^2 =  0$, and hence
\begin{equation}
0\leq (\pr_x - \pr_y)^2 Z = - 4\frac{\varphi''}{\bs{L}} - \kappa_x\cos\alpha_x + \kappa_y\cos\alpha_y\,.\qedhere
\end{equation}
\end{proof}

%\subsection{Reflected two-point function}
\subsection{Reflected profile}

We next apply the first and second derivative tests (in the viscosity sense) to the reflected profile. It will be enough to consider interior points. 

Recall that
\[\bar{Z}(x,y,z) = d(\gamma(x) ,z) + d(\gamma(y) ,z) - \bs{L} \varphi\left(\frac{\bs{\ell}(x,y)}{\tilde{L}}\right).\] 
We write $d_x = d(\gamma(x),z)$, $d_y = d(\gamma(y),z)$. 

\begin{proposition}
\label{prop:tilde-Z}
Suppose that $\bs Z\ge 0$ with $0=\tilde{Z}(x,y)$ for some off-diagonal pair $(x,y)\in (\mathring{M}\times \mathring{M})\setminus D$. At $(x,y)$, 
\begin{equation}
0\leq  -\kappa_x \cos \beta - \kappa_y \cos\beta + \left(\frac{1}{d_x} + \frac{1}{d_y}\right)\frac{2\kappa^S_z}{\left(\frac{1}{d_x} + \frac{1}{d_y}\right) \cos\theta  + 2\kappa^S_z}(1-\varphi'^2) - 4 \frac{\varphi''}{\bs{L}} \,,
\end{equation}
where $\theta = \theta_{x} = - \theta_{y}$, $\beta = \beta_{x}=\beta_{y}$ and \[\left(\frac{1}{d_x} + \frac{1}{d_y}\right) \cos\theta  + 2\kappa^S_z <0.\]
\end{proposition}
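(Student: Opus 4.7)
The approach is to apply first- and second-derivative tests to the three-variable function $\bar{Z}$ at the configuration where its minimum is attained, and then extract the inequality for $\tilde{Z}$ via an envelope/Schur-complement identity. By Lemma \ref{lem:snell}, choose $z^*\in\pr\Omega$ realising $\tilde d(\gamma(x),\gamma(y))=d(\gamma(x),z^*)+d(\gamma(y),z^*)$, so Snell's law gives $\theta_x=-\theta_y=:\theta$. Since $\bar Z(x,y,z^*)\ge\tilde Z(x,y)\ge 0$ with equality at $(x_0,y_0)$, the restriction $\bar Z(\cdot,\cdot,z^*)$ has an interior minimum at $(x_0,y_0)$, while $z^*$ is an interior minimiser of $\bar Z(x_0,y_0,\cdot)$ on the boundary-less curve $\pr\Omega$. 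Hence $(x_0,y_0,z^*)$ is an interior critical point of $\bar Z$ whose (partial) second-derivative tests in $(x,y)$ and in $z$ are available.

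Using Proposition \ref{prop:d} and assuming WLOG $\tilde\ell(x,y)=x+y$ (so $\pr_x\tilde\ell=\pr_y\tilde\ell=1$), the first derivatives are
\[
\pr_x\bar Z=\sin\beta_x-\varphi',\quad \pr_y\bar Z=\sin\beta_y-\varphi',\quad \pr_z\bar Z=-\sin\theta_x-\sin\theta_y.
\]
Vanishing of $\pr_z\bar Z$ recovers Snell's law; vanishing of $\pr_x,\pr_y$ forces $\sin\beta_x=\sin\beta_y=\varphi'$, so either $\beta_x=\beta_y$ or $\beta_x+\beta_y=\pi$. The latter case is ruled out by an embeddedness argument analogous to Claim \ref{claim:angle condition vanilla}: if $\cos\beta_y=-\cos\beta_x$, then the broken path $\gamma(x)\to z^*\to\gamma(y)$ leaves $\gamma(x)$ and arrives at $\gamma(y)$ on opposite sides of $\Gamma$, hence crosses $\Gamma$ at an interior point $\gamma(u)$; combining $d(\gamma(x),\gamma(u))+\tilde d(\gamma(u),\gamma(y))\le\tilde d(\gamma(x),\gamma(y))$ with the companion arclength identity $\ell(x,u)+\tilde\ell(u,y)=\tilde\ell(x,y)$, the hypotheses $\bs Z(x,u),\bs Z(u,y)\ge 0$, and strict concavity of $\varphi$ yields a contradiction. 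Thus $\beta_x=\beta_y=:\beta$. A direct computation also gives $\pr_z^2\bar Z=\cos\theta[A\cos\theta+2\kappa^S_z]$ with $A\doteqdot 1/d_x+1/d_y$, and the second-derivative test in $z$, combined with $\cos\theta<0$, yields $A\cos\theta+2\kappa^S_z\le 0$ (strict in the non-degenerate case, as asserted).

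For the main inequality, by the implicit function theorem applied to $\pr_z\bar Z=0$ (valid once $A\cos\theta+2\kappa^S_z<0$ is strict), we may write $z=z^*(x,y)$ smoothly; the envelope/Schur-complement identity gives
\[
(\pr_x+\pr_y)^2\tilde Z=(\pr_x+\pr_y)^2\bar Z-\frac{(\pr_x\pr_z\bar Z+\pr_y\pr_z\bar Z)^2}{\pr_z^2\bar Z}\ge 0
\]
at the interior minimum $(x_0,y_0)$ of $\tilde Z$. Computing the remaining mixed second derivatives of $\bar Z$ via Proposition \ref{prop:d} and the Frenet relation $\gamma''=-\kappa N$,
\begin{gather*}
\pr_x^2\bar Z=\tfrac{\cos^2\beta}{d_x}-\kappa_x\cos\beta-\tfrac{\varphi''}{\bs L},\quad \pr_y^2\bar Z=\tfrac{\cos^2\beta}{d_y}-\kappa_y\cos\beta-\tfrac{\varphi''}{\bs L},\\
\pr_x\pr_y\bar Z=-\tfrac{\varphi''}{\bs L},\quad \pr_x\pr_z\bar Z=\tfrac{\cos\beta\cos\theta}{d_x},\quad \pr_y\pr_z\bar Z=\tfrac{\cos\beta\cos\theta}{d_y},
\end{gather*}
and substituting into the envelope identity, the bracket simplifies (after cancellation of $A^2\cos\theta$ in the numerator and using $\cos^2\beta=1-\varphi'^2$) to
\[
0\le -(\kappa_x+\kappa_y)\cos\beta+A\cos^2\beta\Bigl(1-\tfrac{A\cos\theta}{A\cos\theta+2\kappa^S_z}\Bigr)-\tfrac{4\varphi''}{\bs L}=-(\kappa_x+\kappa_y)\cos\beta+\tfrac{2A\kappa^S_z}{A\cos\theta+2\kappa^S_z}(1-\varphi'^2)-\tfrac{4\varphi''}{\bs L},
\]
which is the claim. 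The main obstacle is the reflected-setting embeddedness argument ruling out $\beta_x+\beta_y=\pi$: unlike the classical case, the required comparison must mix an unreflected and a reflected chordlength, and the corresponding arclength additivity $\ell(x,u)+\tilde\ell(u,y)=\tilde\ell(x,y)$ holds only when $u$ lies on a specific arc of $\Gamma$ determined by which endpoint of $\Gamma$ realises $\tilde\ell(x,y)$, so one must verify the geometry of the broken path $\gamma(x)\to z^*\to\gamma(y)$ is compatible.
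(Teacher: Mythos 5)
Your argument is essentially the paper's proof: the ``envelope/Schur-complement'' identity you use is algebraically identical to the paper's step of testing $0\le(\pr_x+\pr_y+c\,\pr_z)^2\bar Z$ for all $c$ and then choosing the optimal $c=\bigl(\tfrac{1}{d_x}+\tfrac{1}{d_y}\bigr)\cos\beta\,/\,\bigl[\bigl(\tfrac{1}{d_x}+\tfrac{1}{d_y}\bigr)\cos\theta+2\kappa^S_z\bigr]$, and your treatment of Snell's law, the first-derivative test, and the embeddedness argument ruling out $\beta_x+\beta_y=\pi$ (including the two-case bookkeeping for mixed reflected/unreflected comparisons) matches Claim \ref{claim:angle condition reflected}. (A sign in your $\pr_x\pr_z\bar Z$ differs from the correct value $-\tfrac{1}{d_x}\cos\beta\cos\theta$, but it only enters squared, so nothing is affected.)

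The one step you do not actually establish is the \emph{strict} inequality $\bigl(\tfrac{1}{d_x}+\tfrac{1}{d_y}\bigr)\cos\theta+2\kappa^S_z<0$: you derive only $\le 0$ from $\pr_z^2\bar Z\ge 0$ and $\cos\theta<0$, and then say ``strict in the non-degenerate case.'' Strictness is part of the proposition's conclusion and is also a precondition for your own implicit-function-theorem/Schur step (you cannot divide by $\pr_z^2\bar Z$ if it vanishes), so it must be proved, not assumed. The fix is short and is in the paper: if $\pr_z^2\bar Z=0$, then positive semidefiniteness of the Hessian of $\bar Z$ on the span of $\pr_x+\pr_y$ and $\pr_z$ forces the off-diagonal entry $\pm\bigl(\tfrac{1}{d_x}+\tfrac{1}{d_y}\bigr)\cos\beta\cos\theta$ to vanish as well; since $\cos\theta\ne 0$, this gives $\cos\beta=0$, hence $|\varphi'|=|\sin\beta|=1$, contradicting the standing hypothesis $|\varphi'|<1$ (equivalently, this could only occur at a coincident boundary pair, excluded since $x,y\in\mathring M$ are off-diagonal). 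With that inserted, your proof is complete.
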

\begin{proof}
Recall that $\Gamma$ is parametrised by arclength. First, note that $\tilde{\ell}(x,y) = \min\{x+y, \bs{L}-(x+y)\}$. By reversing the parametrisation if needed, we may assume without loss of generality that $\tilde{\ell}(x,y) = x+y$, and in particular $\pr_x \tilde{\ell} = \pr_y \tilde{\ell} =1$. 

By Lemma \ref{lem:snell} there exists $z\in S$ such that $0=\tilde{Z}(x,y)= \bar{Z}(x,y,z) = \min \bar{Z}$. Moreover, we have $\theta_{x} = -\theta_{y} =: \theta$. 
Now as $x,y,z$ are all pairwise distinct, $\bar{Z}(x,y,z)$ is smooth, and we may freely apply the first and second derivative tests. 

We now have $\pr_x \varphi=\pr_y\varphi = \frac{1}{\bs{L}}\varphi'$, so the first derivatives are 
\[ \pr_x \bar{Z} = \pr^x_{T_x} d|_{\gamma(x), z} - \varphi'  = \inner{ \frac{\gamma(x)-z}{|\gamma(x)-z|}}{T_x} -\varphi' = \sin\beta_{x} - \varphi', \]
\[ \pr_y \bar{Z} = \pr^y_{T_y} d|_{\gamma(y), z} - \varphi' = \inner{ \frac{\gamma(y)-z}{|\gamma(y)-z|}}{T_y} -\varphi' = \sin\beta_{y} - \varphi', \]
and, as in Lemma \ref{lem:snell}, 
\[\pr_z \bar{Z} = \inner{\frac{z-\gamma(x)}{|z-\gamma(x)|}}{T^S_z}+\inner{\frac{z-\gamma(y)}{|z-\gamma(y)|}}{T^S_z} 
= -\sin \theta_{x} - \sin\theta_{y} =0 .\]

The first derivative test also gives $\pr_x \bar{Z}=\pr_y \bar{Z}=0$, so $\sin\beta_{x} = \sin\beta_{y} = \varphi'$. Thus, either $\beta_{x} = \beta_{y}$ or $\beta_{x} + \beta_{y}=\pi$. In fact, since the minimum is zero, only the former occurs:

\begin{claim}\label{claim:angle condition reflected}
$\beta_{x}=\beta_{y}$.
\end{claim}
\begin{figure}
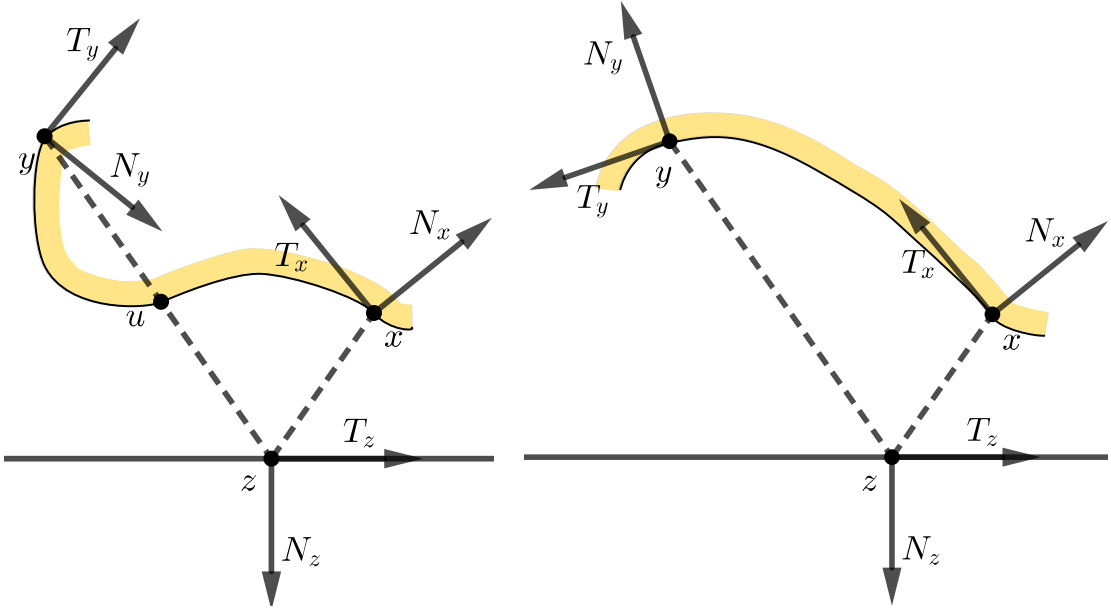

\centering
\includegraphics[width=0.42\textwidth]{wrong_config}\quad\includegraphics[width=0.5\textwidth]{right_config}
\caption{L: inadmissible configuration. R: admissible configuration.}\label{fig:reflected_configs}
\end{figure}
\begin{proof}[Proof of Claim \ref{claim:angle condition reflected}]
Let $\sigma_x$ be the line segment connecting $\gamma(x)$ to $z$ and $\sigma_y$ the line segment connecting $\gamma(y)$ to $z$, and set $w_x = \frac{\gamma(x)-z}{|\gamma(x)-z|}$ and $w_y= \frac{\gamma(y) -z}{|\gamma(y)-z|}$. Again, the curve $\Gamma$ divides the domain $\Omega$ into two connected components $\Omega_\pm$, where $N$ points towards $\Omega_+$ at all points on $\Gamma$. The points in $\sigma_x$ near $\gamma(x)$ are given by $\gamma(x)- \epsilon w_x$, and hence lie in $\Omega_{\sign \langle w, N_x\rangle} = \Omega_{\sign(\cos \beta_{x})}$; similarly the points in $\sigma_y$ near $\gamma(y)$ are given by $\gamma(y)- \epsilon w_y$, and hence lie in $\Omega_{\sign(\cos\beta_{y})}$. If $\beta_{x}+\beta_{y}=\pi$, then this shows that $\sigma_x\cup \sigma_y$ contains points on either side of $\Gamma$. In particular, $\sigma_x\cup \sigma_y$ must intersect $\Gamma$ in a third point $\gamma(u)$, $u\geq 0$ (see Figure \ref{fig:reflected_configs}). We have the following two possibilities:
\begin{enumerate}
\item $\tilde d(x,y)= d(x,u)+\tilde d(u,y)$ and either $\ell(x,u)+\tilde\ell(u,y)=\tilde\ell(x,y)$ or $\tilde\ell(x,u)+\ell(u,y)=2L-\tilde\ell(x,y)$;
\item $\tilde d(x,y)=\tilde d(x,u)+d(u,y)$ and either $\ell(x,u)+\tilde\ell(u,y)=\tilde\ell(x,y)$ or $\tilde\ell(x,u)+\ell(u,y)=2L-\tilde\ell(x,y)$. 
\end{enumerate}
So strict concavity and symmetry of $\varphi$ ensure in case (1) that either $Z(x,u)<0$ or $\tilde Z(x,u)<0$ and in case (2) that either $Z(u,y)<0$ or $\tilde Z(u,y)<0$, all of which are impossible since, by  assumption, $\bs Z\ge 0$.
\end{proof}

Henceforth, we write $\beta\doteqdot \beta_x=\beta_y$. We now compute $\pr_x^2 \varphi = \pr_y^2 \varphi = \pr_x\pr_y \varphi = \frac{1}{\bs{L}^2}\varphi''$, so the second derivatives are given by
\[
\begin{split}
\pr_x^2 \bar{Z} &= \frac{1}{d_x} - \frac{1}{d_x} \inner{ \frac{\gamma(x)-z}{|\gamma(x)-z|}}{T_x}^2  - \kappa_x \inner{ \frac{\gamma(x)-z}{|\gamma(x)-z|}}{N_x} - \frac{1}{\bs{L}}\varphi''
\\&=  \frac{1}{d_x} \cos^2\beta - \kappa_x \cos \beta - \frac{1}{\bs{L}}\varphi'', 
\end{split} \]

\[
\begin{split}
\pr_y^2 \bar{Z} &= \frac{1}{d_y} - \frac{1}{d_y} \inner{ \frac{\gamma(y)-z}{|\gamma(y)-z|}}{T_y}^2  - \kappa_y \inner{ \frac{\gamma(y)-z}{|\gamma(y)-z|}}{N_y} - \frac{1}{\bs{L}}\varphi''
\\&=  \frac{1}{d_y} \cos^2\beta - \kappa_y \cos \beta - \frac{1}{\bs{L}}\varphi'', 
\end{split} \]

\[
\pr_x \pr_y \bar{Z} =  - \frac{1}{\bs{L}}\varphi'',
\]

\begin{align*}
\pr_z^2 \bar{Z} ={}& \frac{1}{d_x} - \frac{1}{d_x} \inner{\frac{\gamma(x)-z}{|\gamma(x)-z|}}{T^S_z}^2+\kappa^S_z \inner{\frac{\gamma(x)-z}{|\gamma(x)-z|}}{N^S_z}\\&+ \frac{1}{d_y} - \frac{1}{d_y} \inner{\frac{\gamma(y)-z}{|\gamma(y)-z|}}{T^S_z}^2+\kappa^S_z\inner{\frac{\gamma(y)-z}{|\gamma(y)-z|}}{N^S_z}
\\
={}& \left(\frac{1}{d_x} + \frac{1}{d_y}\right) \cos^2\theta  + 2\kappa^S_z\cos\theta\,,
\end{align*}

\[
\begin{split}
\pr_z \pr_x \bar{Z}={}& -\frac{1}{d_x} \inner{T^S_z}{T_x} +  \inner{ \frac{\gamma(x)-z}{|\gamma(x)-z|}}{T_x}  \inner{ \frac{\gamma(x)-z}{|\gamma(x)-z|}}{T^S_z}
\\
={}& -\cos(\beta-\theta)  + \sin \beta \sin \theta\\
 ={}&-\cos\beta\cos \theta\,,
\end{split} \]
and
\[
\begin{split}
\pr_z \pr_y \bar{Z}={}& -\frac{1}{d_y} \inner{T^S_z}{T_y} +  \inner{\frac{\gamma(y)-z}{|\gamma(y)-z|}}{T_y} \inner{\frac{\gamma(y)-z}{|\gamma(y)-z|}}{T^S_z}\\
= {}&-\cos(\beta+\theta) - \sin\beta\sin \theta\\
={}& -\cos\beta\cos\theta\,.
\end{split}
\]
The second derivative test then gives
\[
0\le\pr_z^2 \bar{Z} = \left(\frac{1}{d_x} + \frac{1}{d_y}\right) \cos^2\theta  + 2\kappa^S_z\cos\theta
\]
and, for any $c$, 
\begin{align}\label{eq:second variation Z}
\qquad\qquad 0 \leq{}& (\pr_x + \pr_y + c \pr_z)^2 \bar{Z}\nonumber\\
={}& -\kappa_x \cos \beta - \kappa_y \cos\beta + \frac{1}{d_x} \cos^2 \beta + \frac{1}{d_y} \cos^2 \beta - 4\frac{\varphi''}{\bs{L}}\nonumber\\
& -2c\frac{1}{d_x}\cos \theta \cos \beta  \mp 2c\frac{1}{d_y}\cos\theta \cos \beta + c^2 \cos\theta\left(\left(\frac{1}{d_x} + \frac{1}{d_y}\right) \cos\theta  + 2\kappa^S_z\right)\\
={}& -\kappa_x \cos \beta - \kappa_y \cos\beta + \left(\frac{1}{d_x} + \frac{1}{d_y}\right)(1-\varphi'^2) - 4 \frac{\varphi''}{\bs{L}}\nonumber\\
{}&+\cos\theta \left( c^2 \left(\left(\frac{1}{d_x} + \frac{1}{d_y}\right) \cos\theta  + 2\kappa^S_z\right) - 2c\left(\frac{1}{d_x} + \frac{1}{d_y}\right)\cos\beta\right). \nonumber
\end{align}
%\textcolor{red}{can we force the above equation number to a better place?}
We may actually now conclude the strict inequality
\[
\left(\frac{1}{d_x} + \frac{1}{d_y}\right)\cos\theta+2\kappa^S_z<0\,.
\]
%[This means that we are always away from the ``cut locus" of the doubling $\bs\Omega$.] 
Indeed, if the coefficient of $c^2$ were to vanish in \eqref{eq:second variation Z}, then the right hand side would be linear in $c$. Since this linear function would be bounded from below, the coefficient of $c$ would also have to vanish; i.e. $\cos\beta=0$, hence $|\varphi'| = |{\sin\beta}| =1$. By the assumptions on $\varphi$, this is only possible if $\tilde{\ell}(x,y)=0$, which in turn can only hold if $x,y$ are the same endpoint, which is not the case by hypothesis. Thus the coefficient of $c^2$ is strictly negative as claimed. 

We now take the optimal value for $c$, which is
\[
c=\frac{\left(\frac{1}{d_x} + \frac{1}{d_y}\right)\cos\beta}{\left(\frac{1}{d_x} + \frac{1}{d_y}\right) \cos\theta  + 2\kappa^S_z}\,.
\]
This yields
\[
\begin{split}
0 \leq {}& (\pr_x + \pr_y + c \pr_z)^2 \bar{Z} \\
= {}&-\kappa_x \cos \beta - \kappa_y \cos\beta + \left(\frac{1}{d_x} + \frac{1}{d_y}\right)(1-\varphi'^2) - 4 \frac{\varphi''}{\bs{L}}\\
& -\frac{\cos\theta}{\left(\frac{1}{d_x} + \frac{1}{d_y}\right) \cos\theta  + 2\kappa^S_z}\left(\frac{1}{d_x} + \frac{1}{d_y}\right)^2 \cos^2 \beta \,.
\end{split}
\]
Finally, we recall that $\cos^2 \beta = 1- \sin^2\beta = 1-\phi'^2$, which completes the proof. 
\end{proof}

%\subsection{Completed two-point function}
\subsection{Completed profile}

Here we consider the completed two-point function $\bs Z$, which controls the completed chord-arc profile. We use the glued function to ensure that the first derivatives vanish, even at a `boundary' minimum. 

Recall that we write $\bs{x}=(x, \sign(\bs{x}))$ for elements of $\bs{M} = (M\sqcup M)/\pr M$. Also note that $\bs{Z}$ has the symmetry $\bs{Z}(\bs{x}, \bs{y}) = \bs{Z}(-\bs{x}, -\bs{y})$, where $-\bs{x} = (x, -\sign(\bs{x}))$. 

\begin{lemma}
\label{lem:C1-endpoints}
If $0=\bs{Z}(\bs{x},\bs{y}) = \min_{\bs M\times \bs M}\bs{Z}$ with $\bs{x}\in \pr M$, then \[0=Z(x,y) = \min_{M\times M} Z,\] and, moreover, $\pr_x Z|_{x,y}  = \pr_y Z|_{x,y}=0$. 
\end{lemma}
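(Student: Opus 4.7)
The plan is to dispatch the two conclusions in sequence: first the identification $Z(x,y) = 0 = \min_{M \times M} Z$, then the vanishing of the first derivatives, with the latter being the delicate point.

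For the minimum, since $\bs{x} \in \pr M$ we have $\gamma(x) \in \pr \Omega$. The triangle inequality then gives $d(\gamma(x), z) + d(z, \gamma(y)) \geq d(\gamma(x), \gamma(y))$ for every $z \in \pr \Omega$, with equality at $z = \gamma(x)$, so $\tilde d(\gamma(x), \gamma(y)) = d(\gamma(x), \gamma(y))$. An analogous observation---taking $s = x$ in the definition of $\tilde \ell$ and noting that the route through the other endpoint of $M$ is no shorter---yields $\tilde \ell(x, y) = \ell(x, y)$. Hence $\tilde Z(x, y) = Z(x, y) = \bs{Z}(\bs{x}, \bs{y}) = 0$. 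Since $Z(x', y') = \bs{Z}(\bs{x}', \bs{y}')$ whenever $\bs{x}', \bs{y}'$ are chosen with matching sign over $x', y'$, we conclude $Z \geq \min_{\bs M \times \bs M} \bs{Z} = 0$ on $M \times M$, with equality at $(x, y)$.

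For the derivatives, the crux is a ``$C^1$ gluing'' of $\bs{Z}$ at the doubled endpoint, which is precisely where orthogonality of $\Gamma$ at $\pr \Omega$ enters (cf.\ Remark \ref{rmk:gluing}). Concretely, I would verify by a direct calculation in local coordinates (placing $\gamma(x)$ at the origin with $T^S \perp T_x$ and approximating $\pr \Omega$ by its tangent line) that the reflected chord $\tilde d(\gamma(\cdot), \gamma(y))$ equals, to first order, the Euclidean distance from $\gamma(y)$ to the mirror image of $\gamma(x)$ across that tangent line. Since that mirror image translates along $-T_x$ as $\gamma(x)$ moves along $T_x$, the contribution to $\tilde d$ is equal and opposite to the contribution to $d$, giving $\pr_x d + \pr_x \tilde d = 0$ at $(x, y)$. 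Combined with the identity $\pr_x \ell + \pr_x \tilde \ell = 0$, which is routine to check in our arclength parametrisation at $x \in \pr M$, this yields $\pr_x Z|_{x, y} + \pr_x \tilde Z|_{x, y} = 0$.

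To finish, since $Z$ and $\tilde Z$ are both nonnegative on $M \times M$ and vanish at $(x, y)$, the one-sided first derivative tests at the boundary point $x \in \pr M$ give $\pr_x Z|_{x, y} \geq 0$ and $\pr_x \tilde Z|_{x, y} \geq 0$; combined with the gluing identity $\pr_x Z = -\pr_x \tilde Z$, these force both to vanish. The vanishing of $\pr_y Z|_{x, y}$ follows either from the ordinary first derivative test (if $y$ lies in the interior of $M$) or by running the same doubling argument in the $\bs{y}$ variable (if $\bs{y} \in \pr M$ as well). The main obstacle is verifying the gluing identity $\pr_x d = -\pr_x \tilde d$: this is where the orthogonal free boundary condition enters essentially; once it is in hand, the remainder is a tidy application of first-order information at a minimum.
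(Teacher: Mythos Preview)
Your strategy is correct and ends in the same place as the paper's: obtain $\pr_x Z\ge 0$ from the one-sided first derivative test for $Z$, extract an opposite inequality from the $\tilde Z$ side, and conclude $\pr_x Z=0$. Your first paragraph (identifying $Z(x,y)=\tilde Z(x,y)=0$ as global minima) is essentially the paper's symmetry argument.

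Where you diverge is in handling the $\tilde Z$ side. You propose to compute the one-sided derivative $\pr_x\tilde d$ directly by replacing $\pr\Omega$ with its tangent line $L$: in that half-plane model $\tilde d_L(\gamma(s),\gamma(y))=|R(\gamma(s))-\gamma(y)|$ is smooth, and since orthogonality gives $R(T_0)=-T_0$ you read off $\pr_x\tilde d_L=-\sin\alpha_x=-\pr_x d$, yielding the clean gluing identity $\pr_x Z+\pr_x\tilde Z=0$. The paper instead never differentiates $\tilde d$ at all (it is a minimum over $z\in\pr\Omega$ and is non-smooth precisely when the minimiser collides with $\gamma(x)$, which is exactly the boundary situation). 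It works with the three-point function $\bar Z(x,y,z)$: for \emph{any} $c$ one has $\bar Z(s,y,\zeta(cs))\ge 0$ with equality at $s=0$, so the one-sided difference quotient is $\ge 0$; a direct computation gives this quotient as $\sqrt{1+c^2}-c\cos\alpha_x-\varphi'$, and choosing $c=\pm\cot\alpha_x$ produces the needed reverse inequality.

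Your route is more geometric, but the step you flag as ``the main obstacle'' is real: the first-order equality $\tilde d(\gamma(s),\gamma(y))=\tilde d_L(\gamma(s),\gamma(y))+o(s)$ requires showing that the actual bounce point $z(s)\in\pr\Omega$ stays within $O(s)$ of $\gamma(0)$, so that passing to the tangent line incurs only an $O(s^2)$ error. This is true but is precisely the analysis the paper's $\bar Z$-trick is designed to bypass --- by choosing the $z$-path by hand rather than tracking the optimal one, the paper never needs to locate the minimiser or bound the approximation error.
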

\begin{proof}
By reparametristing, we may assume without loss of generality that $M=[0, L]$, $x=0$ and $\pr_x \ell (x,y)=-1$, $\pr_y\tilde{\ell}(0,y)=1$. As $\bs{x}\in \pr M$, we have $\bs{x} = -\bs{x}$ in $\bs{M}$; by the symmetry mentioned above we have $ 0= \bs{Z}(\bs{x}, \bs{y}) = \bs{Z}(\bs{x}, - \bs{y})$. 

In particular, $0=\bs{Z}(\bs{x},(y,+))=Z(0,y) = \min_{M\times M}Z$. We will first show that $\pr_x Z|_{0,y}=0$. Since $Z$ is smooth, the first derivative test gives 

\begin{equation}
\label{eq:endpt-Z}
 0 \leq \pr_x Z(0 ,y) = \sin \alpha_x + \varphi'.
\end{equation}

On the other hand, we also have $0=\bs{Z}(\bs{x},(y,-)) = \tilde{Z}(0,y) = \min_{M\times M}\tilde{Z}(\cdot,\cdot)$. Take a unit speed parametrisation $\zeta$ of $S$ so that $\zeta(0)=\gamma(0)=:z_0$ and $\zeta'(0) = T^S_{z_0}=-N_{z_0}$ (for the last equality we have used the orthogonal contact). Then for any $c\in \mathbb{R}$ and $s\geq 0$, we must have $0\leq \bar{Z}( s, y, \zeta(cs))$, with equality at $s=0$. 
Taking the difference quotients directly gives
\[
\begin{split}
 0&\leq  \lim_{s\to 0^+} \frac{\bar{Z}(s,y, \zeta(cs)) - \bar{Z}(0,y, z_0)}{s} \\&=  \lim_{s\to 0^+} \frac{ d(s, \zeta(cs))+ d( \zeta(cs),\gamma(y)) -d( z_0, \gamma(y))}{s}   - \varphi' 
 \\&= \lim_{s\to 0^+} \frac{ d(s, \zeta(cs))}{s} -c\, (\pr^x_{N_{z_0}} d)|_{z_0, \gamma(y)} -\varphi'
  \\&= \lim_{s\to 0^+} \sqrt{1+c^2} -c \cos\alpha_x -\varphi'\,.
 \end{split}
 \]
 
 Choosing $c=\pm \cot\alpha_x$, we obtain (note that $\sin\alpha_x \leq 0$)
 \[
 0\leq %\frac{1}{|\sin\alpha_x|} - \frac{\cos^2 \alpha_x}{|\sin\alpha_x|} + \varphi' = 
 -\sin\alpha_x + \varphi'
 \]
%, \textcolor{red}{probably deserves a change in convention} so therefore $0\leq -\sin\alpha_x + \varphi'$. 
Combining this with (\ref{eq:endpt-Z}), we find that indeed \[\pr_x Z(0,y) = \sin \alpha_x - \varphi'=0\] as desired. 
 
 If $\bs{y}$ is also in $\pr M$, then the same argument shows that $\pr_y Z|_{x,y} =0$. On the other hand, if $\bs{y}\notin \pr M$, then $y\in \mathring{M}$ is an interior point, and the first derivative test for $Z$ yields $\pr_y Z|_{x,y}=0$. 
\end{proof}

Lemma \ref{lem:C1-endpoints} ensures that the first derivatives vanish if a minimum occurs at an endpoint. Morally, this works because the reflected profile %has been defined so that it 
glues with the vanilla chord-arc profile in an essentially $C^1$ manner (as emphasized in Remark \ref{rmk:gluing}). 

We briefly list the remaining possibilities for ``interior'' minima:

\begin{lemma}
Suppose that $0=\bs{Z}(\bs{x},\bs{y}) = \min_{\bs M\times \bs M}\bs{Z}$, where $\bs{x}\ne \bs{y}$ and $\bs{x},\bs{y}\notin\pr M$. We may arrange that either:
\begin{enumerate}[(a)]
\item $\sign(\bs{x}) = \sign(\bs{y})$, $0=Z(x,y) =\min_{M\times M}Z$ and $\pr_x Z|_{x,y} = \pr_y Z|_{x,y}=0$; or
\item $\sign(\bs{x}) \neq \sign(\bs{y})$, and $0=\tilde{Z}(x,y) = \min_{M\times M}\tilde{Z}$. 
\end{enumerate}
\end{lemma}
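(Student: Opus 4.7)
The plan is to reduce to the two stated configurations using the built-in symmetry of $\bs Z$, and then to read off the minimality of $Z$ or $\tilde Z$ directly from the piecewise definition (\ref{eq:bs Z}).

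First I would exploit the symmetry $\bs{Z}(\bs{x},\bs{y}) = \bs{Z}(-\bs{x},-\bs{y})$ (which follows from $\bs{d}$ and $\bs{\ell}$ depending only on $(x,y)$ and the relative sign $\sign(\bs{x})\sign(\bs{y})$). By flipping both signs if necessary, I may arrange $\sign(\bs{x})=+$. Since $\bs{x},\bs{y}\notin\pr M$ there is no identification between the two copies of $M$, so the relative sign $\sign(\bs{x})\sign(\bs{y})\in\{+,-\}$ is well defined and gives two disjoint cases.

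\emph{Case (a): $\sign(\bs{x})=\sign(\bs{y})=+$.} By definition $\bs{Z}(\bs{x},\bs{y})=Z(x,y)$, so $Z(x,y)=0$. Moreover, for any $x',y'\in M$, the point $((x',+),(y',+))\in \bs M\times\bs M$ also has $\bs{Z}=Z(x',y')$, so
\[
Z(x',y') = \bs{Z}((x',+),(y',+))\geq \min_{\bs M\times\bs M}\bs{Z}=0=Z(x,y).
\]
Hence $(x,y)$ is a minimum of $Z$ on $M\times M$. Since $\bs{x},\bs{y}\notin\pr M$, the pair $(x,y)$ lies in $\mathring M\times\mathring M$, and as $Z$ is smooth away from the diagonal $D$ (which we avoid since $\bs x\ne\bs y$), the first derivative test yields $\pr_x Z|_{x,y}=\pr_y Z|_{x,y}=0$.

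\emph{Case (b): $\sign(\bs{x})\ne\sign(\bs{y})$.} Now $\bs{Z}(\bs{x},\bs{y})=\tilde Z(x,y)$, so $\tilde Z(x,y)=0$. The same argument as above, applied to mixed-sign pairs $((x',+),(y',-))$, gives
\[
\tilde Z(x',y') = \bs Z((x',+),(y',-))\geq 0 = \tilde Z(x,y),
\]
so $(x,y)$ is a minimum of $\tilde Z$ on $M\times M$, as required.

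There is no real obstacle here; the lemma is essentially bookkeeping. The only point demanding any care is verifying that the ``other'' sign choice does not produce a smaller value: this is precisely why one quotients by $\pr M$ in the definition of $\bs M$ and why the symmetry $\bs Z(\bs x,\bs y)=\bs Z(-\bs x,-\bs y)$ is required before invoking case analysis. Note that in case (b) we do \emph{not} assert first-derivative vanishing, since that condition (and the ensuing Snell reflection point) is the content of Proposition \ref{prop:tilde-Z} and need not be separately recorded here.
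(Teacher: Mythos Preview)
The proposal is correct and matches the paper's approach; the paper itself gives essentially no proof beyond the remark that ``the vanishing derivatives follow from the first derivative test as $Z$ is smooth ($x\neq y$)'', and you have simply written out the bookkeeping explicitly.
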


Note that in the first case, the vanishing derivatives follow from the first derivative test as $Z$ is smooth ($x\neq y$).

Combining these lemmata with the second derivative tests earlier in this section yields the following dichotomy.

\begin{proposition}
\label{prop:spatial-minimum}
If $0=\min_{\bs M\times \bs M}\bs{Z}=\bs Z(\bs{x}_0,bs{y}_0)$ for some $(\bs{x}_0,\bs{y}_0)\in(\bs M\times \bs M)\setminus\bs D$, then there exist $(\bs{x},\bs{y})\in (\bs M\times\bs M)\setminus\bs D$ such that $\bs{Z}(\bs{x},\bs{y})=0$ and either:
\begin{enumerate}[(a)]
\item $\sign(\bs{x}) = \sign(\bs{y})$, $\alpha_x+ \alpha_y=\pi$, and \[ 0\leq  - 4\frac{\varphi''}{\bs{L}} - \kappa_x\cos\alpha_x + \kappa_y\cos\alpha_y; \] or
\item $\sign(\bs{x}) \neq \sign(\bs{y})$, $x,y\in\mathring{M}$ and for any $z\in S$ such that \[\tilde{d}(\gamma(x),\gamma(y)) =d_x+d_y, \qquad d_x= d(\gamma(x),z) , d_y=d(\gamma(y),z),\] we have 
\[ 0\leq -\kappa_x \cos \beta  - \kappa_{y} \cos \beta + \left(\frac{1}{d_x} + \frac{1}{d_y}\right)\frac{2\kappa^S_z}{\left(\frac{1}{d_x} + \frac{1}{d_y}\right) \cos\theta  + 2\kappa^S_z}(1-\varphi'^2) - 4 \frac{\varphi''}{\bs{L}} ,\]
where $\theta = \theta_{x} = - \theta_{y}$, $\beta=\beta_x=\beta_{y}$ and \[\left(\frac{1}{d_x} + \frac{1}{d_y}\right) \cos\theta  + 2\kappa^S_z <0\,.\]
\end{enumerate}
\end{proposition}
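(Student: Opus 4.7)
The proof is essentially a case-by-case bookkeeping exercise that pieces together the three preceding results: the classical spatial minimum proposition, Proposition \ref{prop:tilde-Z}, and Lemma \ref{lem:C1-endpoints} (together with the intermediate structural lemma immediately preceding the statement). The overall strategy is to reduce an arbitrary off-diagonal zero-minimizer $(\bs{x}_0,\bs{y}_0)$ of $\bs{Z}$ to a (possibly different) minimizer $(\bs{x},\bs{y})$ that lies in one of two ``favorable'' regimes where a second-derivative proposition has already been established, and then simply quote the relevant conclusion.

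First I would split on whether the minimum is attained on $\pr M$. If at least one of $\bs{x}_0,\bs{y}_0$ lies in $\pr M$, then Lemma \ref{lem:C1-endpoints} produces a pair $(\bs{x},\bs{y})$ with $\sign(\bs{x})=\sign(\bs{y})$ realising $0 = Z(x,y) = \min_{M\times M}Z$, and with both first derivatives of $Z$ vanishing. These are precisely the hypotheses of the classical minimum proposition, which yields $\alpha_x+\alpha_y = \pi$ and the second-derivative inequality of case (a). The pair $(\bs{x},\bs{y})$ remains off-diagonal because $(\bs{x}_0,\bs{y}_0)$ is (and the lemma's symmetry argument preserves this).

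If instead both $\bs{x}_0,\bs{y}_0$ lie in $\bs{M}\setminus \pr M$, I would invoke the intermediate lemma stated just before the proposition. In subcase (a) of that lemma we have $\sign(\bs{x})=\sign(\bs{y})$, $0=Z(x,y)=\min Z$ and vanishing first derivatives of $Z$, so the classical proposition applies exactly as above, giving conclusion (a). In subcase (b), we have $\sign(\bs{x})\ne\sign(\bs{y})$ with $0=\tilde Z(x,y)=\min_{M\times M}\tilde Z$; since $x,y\in\mathring{M}$ in this situation, all hypotheses of Proposition \ref{prop:tilde-Z} are met, so we may invoke it directly. It produces a Snell reflection point $z\in S$ with $\theta_x=-\theta_y=\theta$ and $\beta_x=\beta_y=\beta$, the sign condition $\left(\frac{1}{d_x}+\frac{1}{d_y}\right)\cos\theta + 2\kappa^S_z<0$, and the advertised second-derivative inequality of case (b).

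The only nontrivial step is the endpoint reduction, i.e.\ showing that a minimum at $\pr M$ forces the (one-sided) first derivatives of $Z$ to vanish. This is precisely the content of Lemma \ref{lem:C1-endpoints}, which in turn rests on the fact that the reflected and unreflected chordlengths glue in an essentially $C^1$ fashion at $\pr\Gamma$ under the orthogonal contact condition (as recorded in Remark \ref{rmk:gluing}). Once that lemma is in hand, the entire proof reduces to the straightforward case split outlined above, and there is no further obstacle.
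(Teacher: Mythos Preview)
Your proposal is correct and follows essentially the same approach as the paper: the paper itself presents Proposition~\ref{prop:spatial-minimum} as an immediate combination of Lemma~\ref{lem:C1-endpoints}, the intermediate structural lemma, and the two second-derivative propositions, and your case split is exactly that combination spelled out. One minor wording point: in case~(b) you say Proposition~\ref{prop:tilde-Z} ``produces a Snell reflection point $z$'', whereas the stated conclusion is for \emph{any} $z$ realising $\tilde d$; but the proof of Proposition~\ref{prop:tilde-Z} applies verbatim to any such $z$ (each satisfies Snell's law), so this is only a phrasing issue.
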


\section{Evolution and lower bounds for the chord-arc profile}
%\section{Evolution and lower bounds}

\subsection{Evolution of the chord-arc profile}\label{sec:evolution}

We now consider a free boundary curve shortening flow $\{\Gamma_t\}_{t\in [0,T)}$ with parametrisation $\gamma:M\times[0,T)\to \Omega$ and a smooth function $\varphi:[0,1]\times[0,T)\to\R$ satisfying the following conditions at every time $t$. (Here primes indicate spatial derivatives.)
\begin{enumerate}[(i)]
\item $\varphi(1-\zeta, t) = \varphi(\zeta,t)$ for all $\zeta\in[0,1]$.
\item $|\varphi'(\cdot, t)|<1$.
\item $\varphi(\cdot,t)$ is strictly concave.
\end{enumerate}
%Recall that the flow satisfies $\pr_t\gamma(x,t) = -\kappa(x,t)N(x,t)$. 

%Define $\bs{d}(x,y,t) \doteqdot \bs{d}_{\gamma(\cdot,t)}(x,y)$, $\bs\ell(x,y,t)\doteqdot \bs{\ell}_{\gamma(\cdot,t)}(x,y)$, $\bs{L}(t) \doteqdot \bs{L}_{\gamma(\cdot,t)}$. 
Denote by $\bs{d}(\cdot,\cdot,t)$, $\bs\ell(\cdot,\cdot,t)$, and $\bs{L}(t)$ the chordlength, arclength and length of the timeslice $\Gamma_t$. We consider the time-dependent auxiliary functions
\[ Z(x,y,t) = d(\gamma(x,t),\gamma(y,t)) - \bs{L}(t) \varphi\left(\frac{\ell(x,y,t)}{\bs{L}(t)},t\right), \]
\[ \tilde{Z}(x,y,t) = \tilde{d}(\gamma(x,t),\gamma(y,t),t) - \bs{L}(t) \varphi\left(\frac{\tilde{\ell}(x,y,t)}{\bs{L}(t)},t\right), \]
\[
\bs{Z}(x,y,t) \doteqdot \bs{d}(x,y,t)-\bs{L}(t)\varphi\left(\frac{\bs{\ell}(x,y,t)}{\bs{L}(t)},t\right),
\]
on $M\times M$ and 
\[\bar{Z}(x,y,z,t) = d(\gamma(x,t) ,z) + d(\gamma(y,t) ,z) - \bs{L}(t) \varphi\left(\frac{\tilde{\ell}(x,y,t )}{\bs{L}(t)},t\right)\]
on $M\times M\times S$. Denote by $[\bs{x}:\bs{y}]$ the shorter portion of $\bs{M}\setminus \{\bs{x},\bs{y}\}$. 

\begin{proposition}\label{prop:comparison equation}
Suppose that $\bs Z(\cdot,\cdot,0)\ge 0$ with strict inequality away from the diagonal. Further suppose that $t_0\doteqdot \sup\{t\in [0,T):Z(\cdot,\cdot,t)\ge 0\}<T$. Then there exist $\bs{x},\bs{y}\in (\bs M\times\bs M)\setminus \bs D$ such that $\bs{Z}(\bs{x},\bs{y},t_0)=0$ and either:
\begin{enumerate}[(a)]
\item $\sign(\bs{x}) = \sign(\bs{y})$, $\alpha_{x}+ \alpha_{y}=\pi$, and
\begin{equation}\label{eq:case-a}
0 \geq 4\frac{\varphi''}{\bs{L}}  +2 \left( \varphi - \varphi' \frac{\bs\ell}{\bs{L}}\right)\int_{\Gamma_{t}} \kappa^2 ds + \varphi' \int_{[x:y]}\kappa^2 ds- \bs{L} \pr_t \varphi
\end{equation}
%at $\bs\ell(x,y,t_0)/\bs L(t_0)$
or
\item $\sign(\bs{x}) \neq \sign(\bs{y})$, $x,y\in\mathring{M}$, $\beta_{x}=\beta_{y}$, and for any $z\in S$ such that \[\tilde{d}(\gamma(x),\gamma(y)) = d(\gamma(x),z) + d(\gamma(y),z) = d_x+d_y,\] we have 
\begin{equation}\label{eq:case-b}
\begin{split}
0 \geq {}& 4\frac{\varphi''}{\bs{L}}+2 \left( \varphi - \varphi' \frac{\bs\ell}{\bs{L}}\right)\int_{\Gamma_{t_0}} \kappa^2 ds + \varphi' \int_{[\bs{x}:\bs{y}]}\kappa^2 ds- \bs{L} \pr_t \varphi\\
{}&-\left(\frac{1}{d_x} + \frac{1}{d_y}\right)\frac{2\kappa^S_z}{\left(\frac{1}{d_x} + \frac{1}{d_y}\right) \cos\theta  + 2\kappa^S_z}(1-\varphi'^2) \,,
\end{split}
\end{equation}
where $\theta = \theta_{x} = - \theta_{y}$ and \[\left(\frac{1}{d_x} + \frac{1}{d_y}\right) \cos\theta  + 2\kappa^S_z <0.\]
\end{enumerate}
\end{proposition}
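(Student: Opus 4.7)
The plan is to combine the spatial second-derivative analysis of Proposition \ref{prop:spatial-minimum} with a time-differentiation argument that exploits the first-order condition $\pd_t \bs Z\le 0$ at the first touching time. For the setup: continuity of $\bs Z$ together with strict positivity near the diagonal $\bs D$ (following from $|\varphi'(\cdot,t_0)|<1$) and compactness of $\bs M$ ensures that $\bs Z(\cdot,\cdot,t_0)\ge 0$ attains a minimum value of $0$ at some off-diagonal pair. Applying Proposition \ref{prop:spatial-minimum} at time $t_0$ then produces a pair $(\bs x,\bs y)\in (\bs M\times\bs M)\setminus\bs D$ with $\bs Z(\bs x,\bs y,t_0)=0$, falling into case (a) or (b) of that proposition together with the associated angle conditions and a spatial lower bound on the appropriate curvature combination ($-\kappa_x\cos\alpha_x+\kappa_y\cos\alpha_y$ in case (a), $-\kappa_x\cos\beta_x-\kappa_y\cos\beta_y$ in case (b)).

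Next I would compute $\pd_t\bs Z$ at $(\bs x,\bs y,t_0)$. Using $\pd_t\gamma=-\kappa N$ and the first-variation formulas from Proposition \ref{prop:d}, one has $\pd_t d = -\kappa_x\cos\alpha_x+\kappa_y\cos\alpha_y$, while $\pd_t\,ds = -\kappa^2\,ds$ gives $\pd_t\bs L = -2\int_{\Gamma}\kappa^2\,ds$ and, on the sub-arc $[\bs x:\bs y]$ realising $\bs\ell$, $\pd_t\bs\ell = -\int_{[\bs x:\bs y]}\kappa^2\,ds$. In case (b), the envelope theorem applied at the reflection point $z_*\in S$ --- an interior minimiser of $\bar Z(x,y,\cdot,t)$ satisfying $\pd_z\bar Z|_{z_*}=0$ (per the proof of Lemma \ref{lem:snell}) --- yields
\[
\pd_t\tilde d = -\kappa_x\cos\beta_x-\kappa_y\cos\beta_y,
\]
and similarly at the discrete minimiser $s_0\in\pd M$ of $\ell(x,\cdot)+\ell(y,\cdot)$ one obtains the corresponding formula for $\pd_t\bs\ell$. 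Combining these with the product and chain rules applied to $\bs L\varphi(\bs\ell/\bs L,t)$ gives
\[
\pd_t\lb \bs L\varphi(\bs\ell/\bs L,t)\rb = \lb\varphi-\varphi'\bs\ell/\bs L\rb\pd_t\bs L + \varphi'\pd_t\bs\ell + \bs L\pd_t\varphi.
\]

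The first touching condition $\pd_t\bs Z(\bs x,\bs y,t_0)\le 0$ will then provide an upper bound on the very curvature combination that was bounded below by the spatial second-derivative test. Subtracting these two bounds --- after substituting the explicit expressions above into the upper bound and rearranging --- will yield \eqref{eq:case-a} and \eqref{eq:case-b} in the respective cases. The main technical subtlety I expect is justifying the envelope theorem in case (b): the reflection point $z$ depends on $t$ through $\gamma(x,t)$ and $\gamma(y,t)$, but the strict second-order condition $\lb 1/d_x+1/d_y\rb\cos\theta+2\kappa^S_z<0$ proved within Proposition \ref{prop:tilde-Z}, combined with the implicit function theorem, ensures smooth dependence $z=z_*(x,y,t)$ on a neighbourhood of the minimiser, which legitimates the calculation. (The discrete minimiser $s_0$ for $\tilde\ell$ is locally constant in $t$, so the envelope step for $\bs\ell$ is automatic.) A secondary issue is attainment of the spatial minimum at time $t_0$, handled by compactness in the bounded setting considered here; the unbounded setting of Section \ref{sec:unbounded} will require separate decay control.
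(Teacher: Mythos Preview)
Your proposal is correct and follows essentially the same approach as the paper. The one minor difference is that in case (b) the paper avoids the envelope theorem entirely: since $\bar Z(x,y,z,t)\ge \tilde Z(x,y,t)\ge 0$ for all $t\le t_0$ with equality at $t_0$ (for the fixed minimising $z$), one may differentiate $\bar Z$ directly at this frozen $z$ to obtain $\pd_t\bar Z(x,y,z,t_0)\le 0$, which yields exactly the curvature combination $-\kappa_x\cos\beta_x-\kappa_y\cos\beta_y$ without any need to track $z_*(t)$ or invoke the implicit function theorem.
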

\begin{proof}
We will simply write $\Gamma =\Gamma_{t_0}$, etc., and we may reparametrise so that $\gamma=\gamma(\cdot,t_0)$ has unit speed. 

First observe that
\[
\lim_{s\to 0^+}\pd_x\bs Z|_{(\xi+s,\xi,\cdot)}=1-\varphi'(0,\cdot)>0 \;\;\text{and}\;\; \lim_{s\to 0^-}\pd_x\bs Z|_{(\xi+s,\xi,\cdot)}=-1+\varphi'(0,\cdot)<0
\]
and, similarly,
\[
\lim_{s\to 0^+}\pd_y\bs Z|_{(\eta,\eta+s,\cdot)}=1-\varphi'(0,\cdot)>0 \;\;\text{and}\;\; \lim_{s\to 0^-}\pd_y\bs Z|_{(\eta,\eta+s,\cdot)}=-1+\varphi'(0,\cdot)<0\,.
\]
This ensures that the diagonal is a \emph{strict} local minimum for $Z$. In fact, due to compactness of $[0,t_0]$, we are guaranteed the existence of a neighbourhood $\bs U$ of the diagonal $\bs D$ such that $\bs Z|_{(\bs U\setminus \bs D)\times[0,t_0]}>0$. So there must indeed exist an \emph{off-diagonal} pair $(\bs{x},\bs{y}) \in (\bs{M}\times \bs{M})\setminus\bs D$ attaining a zero minimum for\footnote{The same conclusion can be reached by analyzing the \emph{second} derivatives of $\bs Z$ in case $\varphi'(0,t)\equiv 1$, but we will in any case eventually choose $\varphi$ to satisfy the strict inequality $\varphi'(0,t)<1$.} $\bs Z(\cdot,\cdot,t_0)$. 

Proposition \ref{prop:spatial-minimum} now reduces to the following two cases depending on the location of the spatial minimum.

\textbf{Case (a):} $\sign(\bs{x}) = \sign(\bs{y})$, so that $0\leq \bs{Z}(\bs{x},\bs{y},t) = Z(x,y,t) $, with equality at $t_0$, and hence
\begin{align*}
 0 \geq \pr_t Z(x,y,t_0) ={}& -\kappa_x \inner{\frac{\gamma(x)-\gamma(y)}{|\gamma(x)-\gamma(y)|}}{N_x} -\kappa_y \inner{\frac{\gamma(y)-\gamma(x)}{|\gamma(x)-\gamma(y)|}}{N_y}\\
 & - \varphi \pr_t \bs{L} - \varphi' \pr_t\ell + \frac{\ell}{\bs{L}} \varphi' \pr_t \bs{L} - \bs{L} \pr_t \varphi
 \\
 ={}& -\kappa_x \cos \alpha_x + \kappa_y \cos \alpha_y - \varphi \pr_t \bs{L} - \varphi' \pr_t\ell + \frac{\ell}{\bs{L}} \varphi' \pr_t \bs{L} - \bs{L} \pr_t \varphi\,.
\end{align*}

Note that
\[
\pr_t \bs{L} = -2 \int_{\Gamma_t} \kappa^2 ds\, \;\;\text{and}\;\; \pr_t \ell = -\int_{[x:y]} \kappa^2 ds\,,
\]
where $[x:y]$ is the interval between $x$ and $y$. Applying the spatial minimum condition of Proposition \ref{prop:spatial-minimum} now yields \eqref{eq:case-a}.
%\[
%0 \geq 4\frac{\varphi''}{\bs{L}}  +2 \left( \varphi - \varphi' \frac{\ell}{\bs{L}}\right)\int_{\Gamma_{t_0}} \kappa^2 ds + \varphi' \int_{[x:y]}\kappa^2 ds-\bs{L} \pr_t \varphi.
%\]

\textbf{Case (b):} $\sign(\bs{x}) \ne \sign(\bs{y})$, so that $0=\bs{Z}(\bs{x},\bs{y},t_0) = \tilde{Z}(x,y,t_0) = \bar{Z}(x,y,z,t_0)$ for some $z\in S$, and hence %$0\leq \bs{Z}(x,y,t) = \tilde{Z}(x,y,t) \leq \bar{Z}(x,y,z,t)$. Therefore,
\begin{align*}
 0 \geq \pr_t \bar{Z}(x,y,z,t_0) ={}& -\kappa_x \inner{\frac{\gamma(x)-z}{|\gamma(x)-z|}}{N_x} -\kappa_{y} \inner{\frac{\gamma(y)-z}{|\gamma(y)-z|}}{N_{y}}  \\
 & - \varphi \pr_t \bs{L} - \varphi' \pr_t \tilde{\ell} + \frac{\tilde{\ell}}{\bs{L}} \varphi' \pr_t \bs{L} - \bs{L} \pr_t \varphi \\
={}& -\kappa_x \cos \beta_{x} - \kappa_{y} \cos \beta_{y} - \varphi \pr_t \bs{L} - \varphi' \pr_t\tilde{\ell} + \frac{\tilde{\ell}}{\bs{L}} \varphi' \pr_t \bs{L} - \bs{L} \pr_t \varphi. 
\end{align*}

Now note that $\pr_t \tilde{\ell} = -\int_{[\bs{x}:\bs{y}]} \kappa^2 ds$, where $[\bs{x}:\bs{y}]$ is the shorter portion of $\bs{M}\setminus \{\bs{x},\bs{y}\}$. Applying the spatial minimum condition of Proposition \ref{prop:spatial-minimum} now yields \eqref{eq:case-b}
%
%\begin{align}\label{eq:case-b}
%0 \geq & 4\frac{\varphi''}{\bs{L}} -\left(\frac{1}{d_x} + \frac{1}{d_y}\right)\frac{2\kappa^S_z}{\left(\frac{1}{d_x} + \frac{1}{d_y}\right) \cos\theta  + 2\kappa^S_z}(1-\varphi'^2)  \nonumber\\
%& +2 \left( \varphi - \varphi' \frac{\tilde{\ell}}{\bs{L}}\right)\int_{\Gamma_{t_0}} \kappa^2 ds + \varphi' \int_{[x:y]}\kappa^2 ds- \bs{L} \pr_t \varphi.\qedhere
%\end{align}
%\textcolor{red}{Note that the second term is positive (helpful) since $\kappa^S\geq0$ and the denominator is negative. In fact, the coefficient is $\geq \kappa^S$.}
\end{proof}

%\section{A lower bound for the chord-arc profile}
%\section{A lower bound}

\subsection{Lower bounds for the chord-arc profile}

Note that the length is monotone non-increasing under free boundary curve shortening flow, and hence attains a limit as $t\to T$. In order to (crudely) estimate the curvature integrals in Proposition \ref{prop:comparison equation}, we will make use of the following lemma.

\begin{lemma}\label{lem:annoying angle lemma}
Let $\epsilon\in(0,\frac{\pi}{2})$. 
Suppose $\Gamma$ is a curve in $\Omega$ which meets $S=\pd\Omega$ orthogonally at $\pd\Gamma=\{z_0,z_1\}$. Denote by $L$ the length of $\Gamma$, and $C= \sup_{S\cap B_{3L}(\pr\Gamma)} \kappa^S$.

 If $L (1+C) \leq \frac{\epsilon}{100}$, then $|\measuredangle(N^S_{z_0}, N^S_{z_1})| \leq \frac{\epsilon}{2}$. Moreover, if $x,y\in \Gamma$, then any $z\in S$ realising $\tilde{d}(x,y) = d(x,z) + d(y,z)$ must satisfy $|\measuredangle(N^S_z, N^S_{z_i})| \leq \frac{\epsilon}{2}$ for each $i=0,1$. 
\end{lemma}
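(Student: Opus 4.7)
The strategy is to bound the arclength of the short arc of $S$ between the relevant pair of boundary points, and then to integrate the curvature along it. Since $\Gamma$ has length $L$, the triangle inequality gives $|z_0 - z_1| \le L$ immediately. For the second claim, any optimal reflection point $z$ lies within Euclidean distance $3L$ of each $z_i$: indeed, since $x, y \in \Gamma$ lie at intrinsic distance at most $L$ from each endpoint, $|x - z_0|, |y-z_0| \le L$, so optimality of $z$ gives $d(x,z) + d(y,z) \le d(x,z_0) + d(y,z_0) \le 2L$, and hence $|z - z_i| \le |z-x| + |x - z_i| \le 3L$ for each $i$.

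The core estimate is the following arclength and turning bound. Fix a basepoint $p \in \{z_0, z_1\}$ and a target $q \in S$ with $|q - p|$ of order $L$, and set up coordinates so that $p$ is the origin, $T^S_p = e_1$, and $N^S_p = e_2$. Convexity of $\Omega$ makes $S$ locally the graph $y = f(x)$ of a concave $C^2$ function with $f(0) = f'(0) = 0$, and the curvature formula $\kappa^S = -f''/(1+f'^2)^{3/2}$ combined with $\kappa^S \le C$ on $S \cap B_{3L}(\pd\Gamma)$ gives $|f''| \le 2\sqrt{2}\,C$ while $|f'| \le 1$. The smallness $LC \le \epsilon/100$ then drives a bootstrap propagating $|f'(x)| \le 2\sqrt{2}\,C|x|$, and hence $|f(x)| \le \sqrt{2}\,Cx^2$, throughout the graph piece from $p$ to $q$. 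This keeps the arc extremely close to the tangent line at $p$; in particular it stays within $B_{3L}(\pd\Gamma)$ (modulo a factor $1+O(C^2L^2)$ absorbed in the slack of the constants) and has length $\ell^S = \int_0^{|x_q|}\sqrt{1+f'^2}\,dx \le |q-p|(1 + O(C^2L^2)) < 2|q-p|$.

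Integrating $\kappa^S$ along this arc then gives $|\measuredangle(N^S_p, N^S_q)| = \int \kappa^S\,ds \le \ell^S \cdot C$. Applied with $(p,q) = (z_0, z_1)$, this yields $|\measuredangle(N^S_{z_0}, N^S_{z_1})| \le 2LC \le \tfrac{2\epsilon}{100} < \tfrac{\epsilon}{2}$, which is the first claim. Applied with $(p,q) = (z_i, z)$ for $i = 0,1$, it yields $|\measuredangle(N^S_{z_i}, N^S_z)| \le 2 \cdot 3L \cdot C = 6LC \le \tfrac{6\epsilon}{100} < \tfrac{\epsilon}{2}$, which is the second claim. The main technical obstacle is rigorously verifying that the short arc of $S$ does remain in the curvature-bounded region $B_{3L}(\pd\Gamma)$ used to define $C$ — this is handled by the bootstrap analysis above, where smallness of $LC$ forces $S$ to deviate negligibly from its tangent line at $p$ throughout the arc, keeping $|f|$ quadratically small in $|x|$ and thus confining the arc to a slightly inflated $B_{3L}$-neighbourhood that the generous factor $\tfrac{1}{100}$ in the hypothesis comfortably accommodates.
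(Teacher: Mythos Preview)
Your argument follows the same skeleton as the paper's: bound the Euclidean distance between the relevant boundary points, convert this to an arclength bound along $S$, and then integrate $\kappa^S$ over that arc. Where the paper invokes the comparison $\ell_S\le \tfrac{2}{C}\sin^{-1}\!\big(\tfrac{C}{2}d\big)$ from \cite[Lemma~3.5]{EGF}, you instead run a direct graph-coordinate bootstrap, which is a fine substitute.

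One step does not quite close as written. With your bound $|z-z_i|\le 3L$, the bootstrap only confines the arc from $z_i$ to $z$ to a ball of radius $3L\big(1+O(C^2L^2)\big)$ about $z_i$, which may overshoot $B_{3L}(\pd\Gamma)$; since $C$ is by definition the supremum of $\kappa^S$ over exactly $S\cap B_{3L}(\pd\Gamma)$, there is no ``slack of the constants'' available to absorb an overshoot --- outside that ball you have no curvature control at all. The easy fix is to sharpen the chord bound: since
\[
\tilde d(x,y)\;\le\;\min\big(\ell(x,z_0)+\ell(y,z_0),\;\ell(x,z_1)+\ell(y,z_1)\big)\;\le\;L,
\]
you in fact get $d(x,z)\le L$ and hence $|z-z_i|\le d(z,x)+d(x,z_i)\le 2L$. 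With chord at most $2L$, your bootstrap confines the arc to $B_{2L(1+O(C^2L^2))}(z_i)\subset B_{3L}(z_i)$ outright, and the rest of your argument goes through cleanly.
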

\begin{proof}
To prove the first claim, we first estimate
\[|\measuredangle(N^S_{z_0}, N^S_{z_1})| = \int_{[z_0:z_1]} \kappa^S ds \leq \ell_S(z_0, z_1) C,\]
where $[z_0:z_1]$ denotes the portion of $S$ between $z_0$ and $z_1$. 
On the other hand, by \cite[Lemma 3.5]{EGF},
\[ \ell_S(z_0,z_1) \leq \frac{2}{C} \sin^{-1}\left(\frac{C}{2} d(z_0,z_1)\right) \leq \frac{2}{C} \sin^{-1}\left(\frac{\epsilon}{200}\right) \leq\frac{\epsilon}{2C}\,.\] 

For the second claim, let $B$ be the ball of radius $3L$ about $z_0$, so that the $2L$-neighbourhood of $\Gamma$ is contained in $B$; in particular, any $z\in S$ realising $\tilde{d}(x,y) = d(x,z) + d(y,z)$ must also lie in $B$. Then as above we have
\[ |\measuredangle(N^S_z, N^S_{z_i})| \leq \ell_S(z,z_i) C\]
and
\[ \ell_S(z,z_i) \leq \frac{2}{C} \sin^{-1}\left(\frac{C}{2} d(z,z_i)\right) \leq \frac{2}{C} \sin^{-1}\left(\frac{6\epsilon}{50}\right) \leq\frac{\epsilon}{2C}\,.\qedhere\] 
\end{proof}

The following theorem provides a uniform lower bound for the chord-arc profile $\bs\psi(\cdot,t)$ of $\Gamma_t$ so long as $L(t)\to 0$ as $t\to T$. Recall that the evolution of any compact curve in a convex domain will always remain in some compact set.

\begin{theorem}
%There exists $\epsilon = \epsilon(\Omega)>0, C=C(L_0, \Omega)$ with the following significance: 
Let $\{\Gamma_t\}_{t\in[0,T)}$ be a compact free boundary curve shortening flow in a convex domain $\Omega$ which remains in the compact set $K$. Suppose that $L(0)(1+C) \leq \frac{\epsilon}{1000}$ for some $\varepsilon\in(0,\frac{1}{10})$, where $C:=\sup_{S\cap K} \kappa^S$, $S:=\pr\Omega$. Given any $c\in(0,\frac{1}{100})$, if the inequality
\[
\bs{\psi}(\delta,t)  \geq  c\bs L(t)\left(\sin\left((\pi-\epsilon)\frac{\delta}{\bs L(t)}+ \frac{\epsilon}{2}\right)-\sin\left(\frac{\epsilon}{2}\right)\right)
\]
holds at $t=0$, then it holds for all $t\in [0,T)$.
\end{theorem}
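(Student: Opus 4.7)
I would prove the theorem by the maximum principle, using the time-independent profile
\[
\varphi(\zeta) \doteqdot c\bigl(\sin((\pi-\epsilon)\zeta + \tfrac{\epsilon}{2}) - \sin\tfrac{\epsilon}{2}\bigr),
\]
so that the stated inequality is precisely $\bs Z(\cdot,\cdot,t) \ge 0$ with this choice. A brief check verifies the structural hypotheses of Section \ref{sec:evolution}: symmetry $\varphi(1-\zeta) = \varphi(\zeta)$ follows from $\sin(\pi - x) = \sin x$; $|\varphi'| \le c(\pi-\epsilon) < \pi/100 < 1$ uses $c < 1/100$; and $\varphi''(\zeta) = -c(\pi-\epsilon)^2 \sin((\pi-\epsilon)\zeta + \tfrac{\epsilon}{2}) < 0$ throughout $[0,1]$, giving strict concavity. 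Since $\varphi$ does not depend on $t$, the term $-\bs L \pr_t \varphi$ in Proposition \ref{prop:comparison equation} vanishes.

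Arguing by contradiction, suppose $t_0 \doteqdot \sup\{t : \bs Z(\cdot,\cdot,t) \ge 0\} < T$. Proposition \ref{prop:comparison equation} then supplies an off-diagonal pair $(\bs x,\bs y)$ with $\bs Z(\bs x,\bs y,t_0) = 0$, placing us in either case (a) or case (b). In case (b), the extra boundary-curvature term in \eqref{eq:case-b} is non-negative: its denominator is negative by the last conclusion of Proposition \ref{prop:comparison equation}, its numerator $2\kappa^S_z$ is non-negative since $\kappa^S \ge 0$ by convexity of $\Omega$, and $|\varphi'|<1$. Hence case (b) is strictly more favourable than case (a), and it suffices to contradict case (a).

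For case (a), by the symmetry $\varphi(\zeta) = \varphi(1-\zeta)$ I may take $\zeta \doteqdot \bs\ell/\bs L \in (0,\tfrac{1}{2}]$, so that $\varphi - \varphi'\zeta \ge 0$ and $\varphi' \ge 0$; both curvature integrals in \eqref{eq:case-a} then contribute non-negatively. To beat the negative $4\varphi''/\bs L$ term, I would estimate these integrals via Cauchy--Schwarz and turning arguments. Length monotonicity preserves the smallness hypothesis $L(t)(1+C) \le \epsilon/1000$ for all $t$, so Lemma \ref{lem:annoying angle lemma} gives $|\measuredangle(N^S_{z_0}, N^S_{z_1})| \le \tfrac{\epsilon}{2}$; combined with orthogonal contact and embeddedness of $\Gamma_t$, this forces the tangent to rotate by $\pi \pm O(\epsilon)$ along $\Gamma_t$, yielding $\int_{\Gamma_t}\kappa^2 ds \ge 2(\pi - \tfrac{\epsilon}{2})^2/\bs L$. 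Similarly, the first-derivative condition $\sin\alpha_x = -\varphi'$ with $|\varphi'| \le c\pi$ localises $\alpha_x$ near $0$ or $\pi$; together with $\alpha_x + \alpha_y = \pi$ and the turning identity $\int_{[\bs x : \bs y]} \kappa\,ds \equiv \pi - 2\alpha_x \pmod{2\pi}$, embeddedness fixes the branch to give $\bigl|\int_{[\bs x : \bs y]}\kappa\,ds\bigr| \ge \pi - O(c)$, and hence $\int_{[\bs x : \bs y]}\kappa^2 ds \ge (\pi - O(c))^2/\ell$. Inserting these two lower bounds into \eqref{eq:case-a} and using the algebraic identity $\varphi''(\zeta) = -(\pi-\epsilon)^2\bigl(\varphi(\zeta) + c\sin\tfrac{\epsilon}{2}\bigr)$, the positive gap $(\pi - \tfrac{\epsilon}{2})^2 - (\pi - \epsilon)^2 > 0$ (combined with the subarc contribution) yields strict positivity of the right-hand side, contradicting \eqref{eq:case-a}.

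The principal obstacle is the balance in the regime $\zeta \to \tfrac{1}{2}$, where $\varphi'(\zeta) \to 0$ forces the subarc contribution $\varphi' I_2$ to degenerate, leaving only $2(\varphi - \varphi'\zeta) I_1$ to beat $|4\varphi''/\bs L|$. The slack needed here comes from the shift $\epsilon/2$ built into the barrier (which places it strictly below the sharp semicircular profile $(\bs L/\pi)\sin(\pi\zeta)$) together with the smallness $c < \tfrac{1}{100}$; balancing these small constants against the $O(\epsilon)$ turning deficit coming from $\partial\Omega$ is precisely where the full strength of the hypothesis $L(0)(1+C) \le \epsilon/1000$ is consumed.
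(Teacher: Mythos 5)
Your strategy is the same as the paper's: the same barrier $\varphi(\zeta)=c\bigl(\sin((\pi-\epsilon)\zeta+\tfrac{\epsilon}{2})-\sin\tfrac{\epsilon}{2}\bigr)$, a contradiction via Proposition \ref{prop:comparison equation}, Cauchy--Schwarz plus turning-angle lower bounds for the two curvature integrals, and a final algebraic comparison against $\varphi''$. But two steps do not survive scrutiny. First, the reduction of case (b) to case (a) is incorrect. You are right that the explicit $\kappa^S_z$-term in \eqref{eq:case-b} contributes non-negatively (negative denominator, non-negative numerator), but the subarc integral $\int_{[\bs{x}:\bs{y}]}\kappa^2\,ds$ must be bounded below by a \emph{different} turning argument in case (b): there $[\bs{x}:\bs{y}]$ passes through an endpoint of $\Gamma_t$, the first-derivative identity gives $\sin\beta=\varphi'$ rather than $\sin\alpha_x=-\varphi'$ with $\alpha_x+\alpha_y=\pi$, and the total turning picks up the rotation $\measuredangle(N^S_z,N^S_{z_0})$ of the boundary normal between the Snell point and the endpoint. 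This degrades the lower bound from $2(\cos^{-1}\varphi')^2/\bs\ell$ to $2(\cos^{-1}\varphi'-\tfrac{\epsilon_0}{2})^2/\bs\ell$. Since this bound is multiplied by $\varphi'\ge 0$, a weaker bound gives a weaker conclusion, so contradicting the case-(a) inequality with the case-(a) turning estimate does not dispose of case (b); one must carry the degraded estimate through the final computation in both cases, as the paper does.

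Second, and more seriously, your constants do not close. You apply Lemma \ref{lem:annoying angle lemma} with deficit $\tfrac{\epsilon}{2}$ and hence use $\int_{\Gamma_t}\kappa^2\,ds\ge 2(\pi-\tfrac{\epsilon}{2})^2/\bs L$. At the worst point $\zeta=\tfrac12$, where $\varphi'=0$ and the subarc term vanishes identically, the inequality you need is $(\pi-\tfrac{\epsilon}{2})^2\varphi(\tfrac12)+\varphi''(\tfrac12)>0$, i.e.\ $(\pi-\tfrac{\epsilon}{2})^2(1-\sin\tfrac{\epsilon}{2})>(\pi-\epsilon)^2$; to first order in $\epsilon$ this reads $\pi\epsilon>\tfrac{\pi^2\epsilon}{2}$, i.e.\ $2>\pi$, which is false. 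Note that $c$ cancels here, so the smallness of $c$ cannot rescue you, and the shift $\sin\tfrac{\epsilon}{2}$ in the barrier is precisely the liability producing the term $-(\pi-\tfrac{\epsilon}{2})^2\sin\tfrac{\epsilon}{2}$, not a source of slack. The actual slack in the paper comes from applying Lemma \ref{lem:annoying angle lemma} at the scale $\epsilon_0\doteqdot\tfrac{\epsilon}{10}$ --- which is why the hypothesis is $L(0)(1+C)\le\tfrac{\epsilon}{1000}=\tfrac{\epsilon_0}{100}$ rather than $\tfrac{\epsilon}{100}$ --- so that all angle deficits are $\tfrac{\epsilon_0}{2}=\tfrac{\epsilon}{20}$ and the key inequality becomes $(\epsilon-\epsilon_0)(2\pi-\epsilon-\epsilon_0)>(\pi-\epsilon_0)^2\sin\tfrac{\epsilon}{2}$, which holds since $\tfrac{9}{5}\pi>\tfrac{\pi^2}{2}$. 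With your choice of deficit the barrier is not a subsolution near $\zeta=\tfrac12$ and the maximum-principle argument fails there.
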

\begin{proof}
Set $\varepsilon_0:=\frac{1}{10}\varepsilon$ and $c_0:=\frac{1}{100}$. Observe that the function $\varphi\in C^\infty([0,1])$ defined by
\begin{equation}\label{eq:barrier defn}
\varphi(\zeta)\doteqdot c\left(\sin\left((\pi-\varepsilon)\zeta+ \frac{\varepsilon}{2}\right)-\sin\left(\frac{\varepsilon}{2}\right)\right)
\end{equation}

is symmetric about $\zeta=\frac{1}{2}$ and strictly concave, and has subunital gradient. So it is an admissible comparison function. Forming the auxiliary function $\bs{Z}$ as in \eqref{eq:bs Z} with this choice of $\varphi$, we have $\bs{Z}\geq0$ at $t=0$ by supposition, and we will show that $\bs{Z}\geq 0$ for all $t>0$. Indeed, if, to the contrary, $t_0 \doteqdot  \sup\{t\in[0,T):\bs{Z}(\cdot,\cdot,t)\ge 0\}<T$, then %$\bs{Z}\geq 0$ for $t\in [0,t_0]$ and $\min \bs{Z}(\cdot,\cdot,t_0)=0$.
%Then 
we shall arrive at an absurdity via Proposition \ref{prop:comparison equation}. %We will first work generally to simplify the conditions at the minimum. (We remark that the conditions we arrive at are not in general sharp.) 

Let $\bs{x},\bs{y}$ be as given by Proposition \ref{prop:comparison equation}, and define
\[
\Theta \doteqdot  \frac{1}{2} \left( \bs{L} \int_{\Gamma_t} \kappa^2 ds \right)^\frac{1}{2}\;\;\text{and}\;\;\bs{\omega} \doteqdot  \frac{1}{2} \left( \bs{\ell} \int_{[\bs{x}:\bs{y}]} \kappa^2 ds \right)^\frac{1}{2}\,.
\]
Applying the Cauchy--Schwarz inequality yields
\[
2\Theta \geq \sqrt{2} \int_{\Gamma_t} |\kappa|ds.
\]
On the other hand, by the free boundary condition and the theorem of turning tangents,
\[
\cos\left(\int_{\Gamma_t} \kappa \,ds\right) = \cos \measuredangle(T_{z_1}, T_{z_0})= \cos \measuredangle(N^S_{z_1} , -N^S_{z_0}) = \cos\left( \pi +\measuredangle(N^S_{z_1},N^S_{z_0})\right),
\]
where $z_i$ are the endpoints of $\Gamma_t$. By Lemma \ref{lem:annoying angle lemma}, we may estimate $|\measuredangle(N^S_{z_1}, N^S_{z_0})| <\epsilon_0$, and hence \[(2\Theta)^2 \geq 2(\pi-\epsilon_0)^2.\]

We may similarly estimate
\[
2\bs{\omega} \geq  \int_{[\bs{x}:\bs{y}]} |\kappa|ds\,.
\]
If $\sign(\bs{x})=\sign(\bs{y})$, then $\int_{[\bs{x}:\bs{y}]} \kappa\,ds$ measures the turning angle from $T_x$ to $T_y$; in particular
\[
\cos\left(\frac{1}{2}\int_{[\bs{x}:\bs{y}]} \kappa\,ds\right) = \cos \frac{\measuredangle(T_x,T_y)}{2} = \cos\left(\frac{\alpha_x - \alpha_y}{2}\right) = \cos\left(\alpha_x -\frac{\pi}{2}\right) =  \varphi'.
\] Here we have used that $\alpha_x+\alpha_y=\pi$ and $\varphi' = -\sin\alpha_x$. Thus in case (a) we may estimate
\[ (2\bs{\omega})^2 \geq 2 (\cos^{-1}\varphi')^2.\] 
	
If $\sign(\bs{x})\neq \sign(\bs{y})$, then again by reversing the parametrisation if necessary, we may assume without loss of generality that $[\bs{x}:\bs{y}]$ is precisely the interval $[y,x]$. In particular, $\int_{[\bs{x}:\bs{y}]} \kappa$ gives the turning angle from $T_x$ to $T_{z_0}$ plus the turning angle from $T_{y}$ to $T_{z_0}$, where $z_0 = \gamma(0)$. But now
\[
\begin{split}
\measuredangle(T_x, T_{z_0}) &= \measuredangle(T_x, T^S_{z_0})- \frac{\pi}{2}\\
& = \measuredangle(T_x, T^S_z) +\measuredangle(T^S_z, T^S_{z_0}) - \frac{\pi}{2} \\&= \measuredangle(T_x, T^S_z) +\measuredangle(N^S_z, N^S_{z_0}) - \frac{\pi}{2} \\
& = \beta_{x} - \theta_{x} + \measuredangle(N^S_z, N^S_{z_0}) - \frac{\pi}{2} . 
\end{split}
\]

Arguing similarly for $T_{y}$, we estimate
\[\begin{split}
 \cos\left(\frac{1}{2}\int_{[\bs{x}:\bs{y}]} |\kappa|\,ds\right) &\geq \cos\left( \frac{1}{2} \int_{[0,x]} \kappa\,ds + \frac{1}{2}\int_{[0,y]} \kappa\,ds \right)
\\& =  \cos\left(\frac{1}{2}(\measuredangle(T_x, T_{z_0}) + \measuredangle(T_{y}, T_{z_0}) )\right)
\\& = \cos\left(\frac{1}{2}(\beta_{x} - \theta -\frac{\pi}{2} + \beta_{y} + \theta -\frac{\pi}{2}) + \measuredangle(N^S_z, N^S_{z_0})\right)
\\& = \cos\left(\frac{1}{2}(\beta_{x} + \beta_{y}-\pi)+ \measuredangle(N^S_z, N^S_{z_0})\right) 
\\&= \cos\left(\frac{\pi}{2} -\beta -  \measuredangle(N^S_z, N^S_{z_0})\right)\,.
\end{split}\]
Here we have used $\beta_{x} = \beta_{y}=\beta$. By Lemma \ref{lem:annoying angle lemma}, we may estimate $|\measuredangle(N^S_z, N^S_{z_0})| \leq \frac{\epsilon_0}{2}$. Recall also that $\sin\beta=\varphi'$; since $\varphi$ satisfies $|\varphi'| \leq c(\pi-\varepsilon)$ and $c<\frac{1}{\pi}$, we have
\[
\vert\sin\beta\vert=\vert\varphi'\vert\le c(\pi-\varepsilon)=c(\pi-10\varepsilon_0)\le 1-\tfrac{10}{\pi}\varepsilon_0\le 1-\tfrac{2}{\pi}\varepsilon_0\le \sin(\tfrac{\pi}{2}-\varepsilon_0)\,,
\]
and hence $|\beta| < \frac{\pi}{2}-\epsilon_0$.

 In particular, $\cos^{-1}\varphi' = \frac{\pi}{2}-\beta>\epsilon_0$. It follows that in case (b) we may estimate
\[(2\bs{\omega})^2 \geq 2\left(\cos^{-1}\varphi' -\frac{\epsilon_0}{2}\right)^2.\] 

Since by Lemma \ref{lem:annoying angle lemma} we also have $|\measuredangle(N^S_{z_0}, N^S_{z_1})| \leq \frac{\epsilon_0}{2}$, we have shown that, in either case,
\begin{equation}\label{eq:comparison-final}
\bs{L}^2 \pr_t\varphi \geq 4\varphi''  + 4 \left( \varphi - \varphi' \frac{\bs\ell}{\bs{L}}\right) \left(\pi- \epsilon_0\right)^2 +4 \frac{\bs{L}}{\bs{\ell}}  \varphi'\left(\cos^{-1}\varphi'-\frac{\epsilon_0}{2}\right)^2\,.
\end{equation}

We claim that this is impossible when $\varphi$ is given by \eqref{eq:barrier defn}, proving the theorem. To see this, we first estimate
%note that $\frac{\pi}{2}-\left((\pi-\epsilon) z +\frac{\epsilon}{2}\right) = (\pi-\epsilon)(\frac{1}{2}-z)$. So 
\ba\label{eq:remainder of bad term}
\varphi''+(\pi-\varepsilon_0)^2\varphi={}&c\left[\left((\pi-\varepsilon_0)^2-(\pi-\varepsilon)^2\right)\sin([\pi-\varepsilon]z+\tfrac{\varepsilon}{2})-(\pi-\varepsilon_0)^2\sin\tfrac{\varepsilon}{2}\right]\nonumber\\
%={}&c\left[(\varepsilon-\varepsilon_0)\left(2\pi-(\varepsilon+\varepsilon_0)\right)\cos((\pi-\epsilon)(\tfrac{1}{2}-z))-(\pi-\varepsilon_0)^2\sin\tfrac{\varepsilon}{2}\right]\nonumber\\
\ge{}&%\tfrac{2c}{\pi}\left[(\varepsilon-\varepsilon_0)\left(2(\pi-\varepsilon_0)-(\varepsilon-\varepsilon_0)\right)\left([\pi-\varepsilon]z+\tfrac{\varepsilon}{2}\right)-(\pi-\varepsilon_0)^2\sin\tfrac{\varepsilon}{2}\right]\\
%={}&
c\Big[(\varepsilon-\varepsilon_0)\big(2\pi-\epsilon_0-\epsilon\big)\left(1-(\pi-\varepsilon)^2(\tfrac{1}{2}-z)^2\right)-(\pi-\varepsilon_0)^2\sin\tfrac{\varepsilon}{2}\Big]\nonumber\\
>{}&-c(\varepsilon-\varepsilon_0)\big(2\pi-\epsilon_0-\epsilon\big)(\pi-\varepsilon)^2(\tfrac{1}{2}-z)^2,
\ea
where we have used the quadratic estimate $\sin X \geq 1- \frac{1}{2}\left(\frac{\pi}{2}-X\right)^2$ and the smallness of $\varepsilon_0=\frac{1}{10}\varepsilon$ to ensure that $(\epsilon-\epsilon_0)(2\pi-\epsilon_0-\epsilon)> (\pi-\epsilon_0)^2\sin \tfrac{\epsilon}{2}$. 

%As $\epsilon_0 = \frac{1}{10}\epsilon$
%\bann
%(\varepsilon-\varepsilon_0)\big(2\pi-\epsilon_0-\epsilon)-(\pi-\varepsilon_0)^2\sin\tfrac{\varepsilon}{2}={}&\tfrac{9}{10}\varepsilon\big(2\pi - \tfrac{11}{10}\epsilon\big)\tfrac{\pi}{2}-(\pi-\tfrac{\varepsilon}{10})^2\sin\tfrac{\varepsilon}{2}\\
%\ge{}&\tfrac{9}{10}\varepsilon\big(2\pi-\tfrac{11}{10}\varepsilon\big)-\pi^2\tfrac{\varepsilon}{2}\\
%={}&\tfrac{9}{10}\pi\epsilon (2 - \tfrac{5}{9}\pi  - \tfrac{11}{10}\epsilon)>0
%\eann
%for sufficiently small $\epsilon$, since $\frac{18}{5}>\pi$. Therefore,
%\begin{equation}\label{eq:remainder of bad term}
%\varphi''+(\pi-\varepsilon_0)^2\varphi> -c(\varepsilon-\varepsilon_0)\big(2\pi-\epsilon_0-\epsilon\big)(\pi-\varepsilon)^2(\tfrac{1}{2}-z)^2.
%\end{equation}

%Given any $\varepsilon\in(0,\frac{\pi}{2})$, we can choose $\varepsilon_0>0$ small enough that this is non-negative when $z=\frac{1}{2}$. This is because the function
%\[
%E(\varepsilon,\varepsilon_0):=(\varepsilon-\varepsilon_0)\big(2(\pi-\varepsilon_0)-(\varepsilon-\varepsilon_0)\big)\tfrac{\pi}{2}-(\pi-\varepsilon_0)^2\sin\tfrac{\varepsilon}{2}
%\]
%is continuous and positive when $\varepsilon_0=0$ for any $\varepsilon\in(0,\frac{\pi}{2})$.

%This leaves
%\begin{equation}\label{eq:remainder of bad term}
%\varphi''+(\pi-\varepsilon_0)^2\varphi\ge -\frac{2c}{\pi}(\varepsilon-\varepsilon_0)\big(2(\pi-\varepsilon_0)-(\varepsilon-\varepsilon_0)\big)(\pi-\varepsilon)(\tfrac{1}{2}-z)\,.
%\end{equation}

Next, note that (as $c(\pi-\epsilon)<1$) the function $f(X):=\cos^{-1}(c(\pi-\varepsilon)\cos X)$ is convex on $[0,\frac{\pi}{2}]$; %(as we have arranged that $c(\pi-\varepsilon)<1$) 
 in particular,
\[
f(X) \geq f(\tfrac{\pi}{2})+(X-\tfrac{\pi}{2})f'(\tfrac{\pi}{2})=\tfrac{\pi}{2} - c(\pi-\epsilon)(\tfrac{\pi}{2}-X)\,.
\]
As $\varphi'=c(\pi-\varepsilon)\cos((\pi-\varepsilon)\zeta+\frac{\varepsilon}{2})$, this implies that 
%\[
%\cos^{-1}\varphi'-\tfrac{\varepsilon_0}{2}\ge \tfrac{\varepsilon-\varepsilon_0}{2}+(\pi-\varepsilon)\left[z+\big(1-c(\pi-\varepsilon)\big)(\tfrac{1}{2}-z)\right] \geq 0
%\]
\[
\cos^{-1}\varphi'-\tfrac{\varepsilon_0}{2}\ge \tfrac{\pi-\varepsilon_0}{2}-c(\pi-\varepsilon)^2(\tfrac{1}{2}-z) \geq 0. 
\]
Thus, expanding $z^2=(\frac{1}{2}-(\frac{1}{2}-z))^2$, we find that
\[\begin{split}
 \left(\cos^{-1}\varphi'-\tfrac{\varepsilon_0}{2}\right)^2-(\pi-\epsilon_0)^2 z^2 &\geq  \left((\pi-\epsilon_0)^2-c(\pi-\epsilon_0)(\pi-\epsilon)^2\right)(\tfrac{1}{2}-z)  \\&\qquad+ \left(c^2(\pi-\epsilon)^4 -(\pi-\epsilon_0)^2\right)(\tfrac{1}{2}-z)^2
 \\&\geq (\pi-\epsilon_0)(\tfrac{\pi-\epsilon_0}{2} - c(\pi-\epsilon)^2)(\tfrac{1}{2}-z)\,.
\end{split}
\]
Applying the linear estimate $\cos(X)\ge \tfrac{2}{\pi}(\tfrac{\pi}{2}-X)$ now gives 
%\[\frac{\varphi'}{z} = \frac{c(\pi-\epsilon)\cos((\pi-\epsilon)z+\tfrac{\epsilon}{2})}{z} \geq \frac{2c(\pi-\epsilon)^2(\tfrac{1}{2}-z)}{\pi z}.\] 
%Thus 
\begin{equation}\label{eq:good term}
\frac{\varphi'}{z}\left(\left(\cos^{-1}\varphi'-\tfrac{\varepsilon_0}{2}\right)^2-(\pi-\epsilon_0)^2 z^2 \right) \geq \frac{2c}{\pi z}(\pi-\epsilon_0)(\tfrac{\pi-\epsilon_0}{2} - c(\pi-\epsilon)^2)(\pi-\epsilon)^2(\tfrac{1}{2}-z)^2.
\end{equation}
In particular, since $\frac{2}{z} \geq 4$, $c_0(\pi-\epsilon)^2 < c_0\pi^2<\tfrac{\pi}{4}$, and $\epsilon<\frac{1}{10}$, taking the sum of (\ref{eq:remainder of bad term}) and (\ref{eq:good term}) gives 
\[ \phi'' + (\pi-\epsilon_0)^2\phi +\frac{\varphi'}{z}\left(  \left(\cos^{-1}\varphi'-\tfrac{\varepsilon_0}{2}\right)^2-(\pi-\epsilon_0)^2 z^2 \right) >0,\]
which contradicts (\ref{eq:comparison-final}) as claimed. This completes the proof.
%\[ \frac{2}{\pi z} (\pi-\epsilon_0)(\tfrac{\pi-\epsilon_0}{2} - c(\pi-\epsilon)^2) \geq (\varepsilon-\varepsilon_0)\big(2\pi-\epsilon_0-\epsilon\big). \]
%Since $c_0(\pi-\epsilon)^2 < c_0\pi^2<\tfrac{\pi}{4}$ and $\frac{2}{z} \geq 4$, this is certainly true for $\epsilon<\frac{1}{10}$. 
%\bann
%\frac{\varphi'}{z}{}&\left[\left(\cos^{-1}\varphi'-\tfrac{\varepsilon_0}{2}\right)^2-\textcolor{red}{(\pi-\epsilon_0)^2}z^2\right]\\
%&=\frac{c_0(\pi-\varepsilon)\cos([\pi-\varepsilon]z+\tfrac{\varepsilon}{2})}{z}\left[\left(\cos^{-1}\varphi'-\tfrac{\varepsilon_0}{2}\right)^2-\textcolor{red}{(\pi-\epsilon_0)^2}z^2\right]\\
%&\ge\frac{2c_0(\pi-\varepsilon)^2(\tfrac{1}{2}-z)}{\pi z}\left[\left(\cos^{-1}\varphi'-\tfrac{\varepsilon_0}{2}\right)^2-\textcolor{red}{(\pi-\epsilon_0)^2}z^2\right]\,.
%\eann
%We may crudely estimate
%
%which beats \eqref{eq:remainder of bad term} since $\varepsilon<\frac{1}{10}<\pi-\sqrt{2\pi}$.
%\[
%(\pi-\varepsilon)^2\ge 2\pi\ge 2(\pi-\varepsilon_0)-(\varepsilon-\varepsilon_0)
%\]

\end{proof}

Note that if $L(t)\to 0$ as $t\to T$, then the condition $\left(1+\sup_{S\cap K} \kappa^S\right) L(0)\leq \frac{\epsilon}{1000}$ is eventually satisfied for any $\varepsilon\in(0,\frac{1}{10})$. %Note also that
%\[
%\sin\left((\pi-\epsilon)\zeta+ \frac{\epsilon}{2}\right)-\sin\left(\frac{\epsilon}{2}\right)\to 0\;\;\text{as}\;\;\zeta\to 0\,.
%\]
%\textcolor{red}{not sure what the above was meant to be}
So, unless $L(t)\not\to 0$ as $t\to T$, we can always find some $c>0$ and $t_0\in [0,T)$ such that the theorem applies to $\{\Gamma_t\}_{t\in[t_0,T)}$.

On the other hand, if $L(t)\not\to0$, then we may apply the following (very) crude bound.

\begin{theorem}
Let $\{\Gamma_t\}_{t\in[0,T)}$ be a free boundary curve shortening flow with $\bs{\psi}(\delta,0) \geq cL(0)\sin\left(\frac{\pi\delta}{L(0)}\right)$ and $T<\infty$. If $\bs{L}_T \doteqdot \lim_{t\to T}\bs{L}(t)>0$, then 
\[ \bs{\psi}(\delta,t) \geq  c\bs{L}(t)e^{-\frac{4\pi^2T}{\bs{L}_T^2}} \sin\left(\frac{\pi\delta}{\bs{L}(t)}\right) 
%\geq  2c_0e^{-\frac{4\pi^2T}{\bs{L}_T^2}}\delta 
\]
for all $t\in[0,T)$. 
\end{theorem}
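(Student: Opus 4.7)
The strategy is to apply Proposition \ref{prop:comparison equation} with the elementary comparison function
\[
\varphi(\zeta,t) \doteqdot c\, e^{-\lambda t} \sin(\pi \zeta),
\]
where $\lambda > 4\pi^2/\bs L_T^2$ is arbitrary and will be sent to $4\pi^2/\bs L_T^2$ at the end. Admissibility is immediate: $\sin(\pi\zeta)$ is symmetric about $\tfrac{1}{2}$, strictly concave on $[0,1]$, and $|\varphi'| \leq c\pi < 1$ since the hypothesis and the trivial bound $\bs\psi(\delta) \leq \delta$ force $c \leq 1/\pi$. After matching normalizations of the initial hypothesis (written in terms of $L(0)$) with those of the barrier (written in terms of $\bs L(0) = 2L(0)$) and possibly shrinking $c$ infinitesimally, the hypothesis gives $\bs Z(\cdot,\cdot,0) \geq 0$ with strict inequality off the diagonal.

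Set $t_0 \doteqdot \sup\{t\in [0,T):\bs Z(\cdot,\cdot,t) \geq 0\}$ and suppose for contradiction that $t_0 < T$, so that Proposition \ref{prop:comparison equation} furnishes $(\bs x,\bs y) \in (\bs M\times \bs M)\setminus \bs D$ realizing case (a) or case (b). The critical observation is that every remaining term on the right-hand side of each inequality has a favorable sign. By \cite[Lemma 3.14]{EGF}, $\varphi - \varphi'\bs\ell/\bs L > 0$ and $\varphi' \geq 0$ on $\{\bs\ell \leq \bs L/2\}$, so the curvature integrals $2(\varphi-\varphi'\bs\ell/\bs L)\int\kappa^2\,ds + \varphi'\int_{[\bs x:\bs y]}\kappa^2\,ds$ are nonnegative and may be discarded. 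In case (b), the extra term contains the factor
\[
\frac{2\kappa^S_z}{(1/d_x + 1/d_y)\cos\theta + 2\kappa^S_z},
\]
which is negative (numerator positive, denominator strictly negative by Proposition \ref{prop:tilde-Z}), so after the outer minus sign in (\ref{eq:case-b}) the whole term is again nonnegative and may also be dropped. Both cases thus reduce to $\bs L(t_0)^2\, \pr_t\varphi \geq 4\varphi''$. Substituting $\pr_t\varphi = -\lambda\varphi$ and $\varphi'' = -\pi^2\varphi$, and invoking the monotonicity $\bs L(t_0) \geq \bs L_T$, yields $\lambda \leq 4\pi^2/\bs L(t_0)^2 \leq 4\pi^2/\bs L_T^2$, contradicting our choice of $\lambda$.

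Hence $\bs Z \geq 0$ on $[0,T)$ for every $\lambda > 4\pi^2/\bs L_T^2$; sending $\lambda \downarrow 4\pi^2/\bs L_T^2$ and bounding $t \leq T$ in the exponent gives the stated inequality. The chief difficulty is largely bookkeeping---reconciling the hypothesis (normalized by $L(0)$) with the barrier (normalized by $\bs L(t)$) and ensuring the strict inequalities demanded by the comparison principle. Conceptually, the argument is far simpler than that of the preceding theorem because we do not need a sharp profile; a barrier whose exponential decay rate $\lambda$ merely keeps pace with the drift of $\bs L(t)$ suffices, and $\lambda = 4\pi^2/\bs L_T^2$ is precisely the Dirichlet heat eigenvalue associated to the rescaled operator $(4/\bs L_T^2)\pr_\zeta^2$ on $[0,1]$.
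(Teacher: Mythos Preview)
Your proof is correct and follows essentially the same strategy as the paper: apply Proposition \ref{prop:comparison equation} with the sine profile, observe that all terms except $4\varphi''/\bs L$ and $-\bs L\,\pr_t\varphi$ have a favourable sign, and reduce to an elementary inequality for the barrier. The only technical difference is that the paper introduces the rescaled time $\tau=\int_0^t \bs L^{-2}\,dt$ and takes $\varphi(\zeta,\tau)=c\,e^{-4\pi^2\tau}\sin(\pi\zeta)$, so that $4\varphi''-\pr_\tau\varphi=0$ exactly and the contradiction comes from the \emph{strict} positivity of the discarded terms; you instead fix $\lambda>4\pi^2/\bs L_T^2$, use $\bs L(t_0)\ge \bs L_T$ to force $\lambda\le 4\pi^2/\bs L_T^2$, and pass to the limit. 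Both devices yield the same final bound, and your version has the mild advantage of not appealing to strict positivity (hence no ``unless $\pr\Omega$ is flat'' caveat).
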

\begin{proof}
We introduce the modified time coordinate $\tau \doteqdot  \int_0^t \frac{1}{\bs{L}^2} dt$ and take $\varphi(\zeta,\tau) \doteqdot c e^{-4\pi^2 \tau} \sin(\pi \zeta)$. Then $\bs{Z}\geq0$ at $t=0$ by supposition, and we will show that $\bs{Z}\geq 0$ for all $t>0$. As above, suppose to the contrary that $\bs{Z} <0$ at some positive time, so that $t_0 \doteqdot  \sup \{t: \bs{Z}(\cdot,\cdot,t)\ge 0\}\in(0,T)$ and we can find $(\bs{x},\bs{y})\in (\bs M\times \bs M)\setminus\bs D$ such that $\bs{Z}\geq 0$ for $t\in [0,t_0]$ and $Z(\bs{x},\bs{y},t_0)=\min \bs{Z}(\cdot,\cdot,t_0)=0$. 

Noting that $\pr_\tau \varphi = \bs{L}^2 \pr_t \varphi$, and that (unless $\pr\Omega$ is flat) all the terms in Proposition \ref{prop:comparison equation} involving spatial derivatives of $\varphi$ are strictly positive (except $\varphi''$), we find that
\[
0 < 4\varphi'' - \pr_\tau\varphi=0\,,
\]
which is absurd.  We conclude that that $\bs{Z}\geq 0$ for all $t\in [0,T)$. The claim follows since $\tau \leq \frac{1}{\bs{L}_T^2}T$. %and $\sin(\frac{\pi}{2}x)\ge x$.% and hence $\varphi(\zeta,\tau) \geq c_0 e^{-\frac{4\pi^2}{\bs{L}_T^2} t} \sin(\pi \zeta)$. 
\end{proof}

%\subsection{Further (non-sharp) comparisons}

\subsection{Boundary avoidance}

%As an immediate application of the chord-arc bound, we obtain the following estimate, which establishes that, outside of an intrinsic neighbourhood of its endpoints, a compact, embedded free boundary curve shortening flow in a convex domain $\Omega$ will avoid a neighbourhood of $\pd\Omega$. 
The chord-arc bound immediately yields the following ``quantitative boundary avoidance'' estimate.

Given a curve $\gamma: M\to \Omega$, we shall denote by $\lambda:M\to \mathbb{R}$ the distance to the nearest endpoint; if $\gamma$ is parametrised by arclength, then $\lambda(x) = \min\{x, L-x\}$. 

\begin{proposition}%[Quantitative boundary avoidance]
\label{prop:boundary-avoidance}
%There exists $\delta_0=\delta_0(\Gamma_0,\Omega)$ with the following property. 
Let $\{\Gamma_t\}_{t\in[0,T)}$ be a compact free boundary curve shortening flow in a convex domain $\Omega$. Given any $\delta>0$, there exists $\varepsilon= \varepsilon(\Gamma_0, \Omega,\delta)>0$ such that
\[
\lambda(x,t)>\delta\;\;\implies\;\; d(\gamma(x,t),\pr\Omega)>\epsilon\,.
\]
\end{proposition}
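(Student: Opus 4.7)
My plan is to reduce the proposition to a uniform positive lower bound on the extended chord-arc profile. The key observation is that for any interior point $x\in\mathring{M}$, the doubled lifts $\bs{x}_\pm\doteqdot(x,\pm)\in\bs{M}$ satisfy
\[
\bs{d}(\bs{x}_+,\bs{x}_-) = \tilde{d}(\gamma(x),\gamma(x)) = 2\,d(\gamma(x),\partial\Omega)
\]
and
\[
\bs{\ell}(\bs{x}_+,\bs{x}_-) = \tilde{\ell}(x,x) = 2\lambda(x),
\]
since $\tilde{\ell}(x,x) = \min_{s\in\partial M} 2\ell(x,s) = 2\lambda(x)$ and $\tilde{d}(\gamma(x),\gamma(x))$ is twice the distance from $\gamma(x)$ to its nearest point on $\partial\Omega$. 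The definition of the extended chord-arc profile then yields
\[
2\,d(\gamma(x,t),\partial\Omega) \geq \bs{\psi}(2\lambda(x,t),t),
\]
so it will be enough to bound $\bs{\psi}(2\lambda(x,t),t)$ below by a positive constant on the set where $\lambda(x,t)>\delta$.

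Using the monotonicity $\bs{L}(t)\leq\bs{L}(0)$ and $\lambda\leq L/2$, the hypothesis $\lambda(x,t)>\delta$ forces $\bs{L}(t)>4\delta$, and the normalised argument $\zeta\doteqdot 2\lambda(x,t)/\bs{L}(t)$ is confined to $[2\delta/\bs{L}(0),\tfrac{1}{2}]$. I then split into cases according to the asymptotic behaviour of $\bs{L}$. If $\bs{L}(t)\to 0$ as $t\to T$, let $t_*$ denote the first time with $\bs{L}(t_*)\leq 4\delta$; for $t\geq t_*$ the hypothesis is vacuous, while on $[0,t_*]$ the flow is smooth and embedded and does not touch $\partial\Omega$ at interior points by the strong maximum principle from Section~\ref{sec:prelim}, so $\bs{\psi}(2\delta,t)$ is continuous and strictly positive, and its infimum over this compact interval is positive. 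If $\bs{L}(t)$ is bounded below by some $\bs{L}_\ast>0$ with $T<\infty$, then the second chord-arc theorem applies (after verifying the initial bound $\bs{\psi}(\delta,0)\geq cL(0)\sin(\pi\delta/L(0))$ for some $c$ depending on $\Gamma_0$, which follows from smoothness and embeddedness of $\Gamma_0$) and yields
\[
\bs{\psi}(2\lambda(x,t),t) \geq c\bs{L}_\ast\, e^{-4\pi^2T/\bs{L}_T^2}\sin\!\left(\frac{2\pi\delta}{\bs{L}(0)}\right)>0,
\]
using that $\sin$ is increasing on $[0,\tfrac{\pi}{2}]$.

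The hard part will be the case $T=\infty$ with non-collapsing $\bs{L}$, since the factor $e^{-4\pi^2T/\bs{L}_T^2}$ in the second chord-arc theorem degenerates. The expected remedy is to invoke the first chord-arc theorem at a late time $t_0$ with $\bs{L}(t_0)(1+C)$ sufficiently small, which is automatically available whenever $\bs{L}$ eventually gets small; the theorem then provides a $T$-independent lower bound on $[t_0,T)$, with $[0,t_0]$ handled by compactness as before. In the remaining subcase where $\bs{L}$ stays bounded above and below uniformly on $[0,\infty)$, one may instead appeal to a compactness/regularity argument for the family $\{\Gamma_t\}$ to extract the lower bound. Combining the cases produces the uniform $\varepsilon=\varepsilon(\Gamma_0,\Omega,\delta)>0$ that is required.
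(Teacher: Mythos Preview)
Your key observation is exactly the one the paper uses, and the approaches coincide. The paper's proof is a single line: citing ``the chord-arc estimate'' to assert $\bs{d}/\bs{\ell} > c > 0$, it writes
\[
d(\gamma(x,t),\partial\Omega)=\tfrac{1}{2}\bs{d}(x,-x,t)\geq \tfrac{c}{2}\bs{\ell}(x,-x,t)=c\lambda(x,t),
\]
so that $\varepsilon = c\delta$ works. Your identities $\tilde d(\gamma(x),\gamma(x))=2\,d(\gamma(x),\partial\Omega)$ and $\tilde\ell(x,x)=2\lambda(x)$ are precisely this computation, just routed through $\bs\psi$ rather than the ratio $\bs d/\bs\ell$.

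Your case analysis is an attempt to justify the existence of such a $c$ in detail. It is correct where it actually invokes the two chord-arc theorems of Section~\ref{sec:evolution}: the first theorem handles the situation where $\bs L$ eventually becomes small, and the second handles $T<\infty$ with $\bs L_T>0$. In the remaining subcase---$T=\infty$ with $\bs L$ bounded below---your ``compactness/regularity argument'' is not an argument; neither chord-arc theorem applies as stated (the second requires $T<\infty$, and the first requires $\bs L$ to get small), so something genuinely different would be needed. The paper's one-line proof is equally terse on this point. However, the proposition is only ever applied downstream under the hypothesis $T<\infty$, so the cases you have already handled are the ones that matter for the paper's applications.
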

\begin{proof}
The chord-arc estimate yields $\bs{d}/\bs{\ell} >c>0$; in particular,
\[
d(\gamma(x,t),\pd\Omega)=\frac{1}{2}\bs d(x,-x,t)\ge \frac{c}{2}\bs\ell(x,-x,t)=c\lambda(x,t)\,.\qedhere
\]
\end{proof}

\section{Convergence to a critical chord or a round half-point}
\label{sec:grayson}

We now exploit the chord-arc estimate to rule out collapsing at a finite time singularity, resulting in a free-boundary version of Grayson's theorem, Theorem \ref{thm:fbGrayson}. Given $z\in S=\pr\Omega$, we denote by $T_z\Omega$ the halfplane $\{p\in\R^2: \langle p, N^S(z) \rangle \le 0\}$. 

\begin{theorem}\label{thm:fbGrayson}
Let $\Omega \subset \mathbb{R}^2$ be a convex domain of class $C^2$ and let $\{\Gamma_t\}_{t\in [0,T)}$ be a maximal free boundary curve shortening flow starting from a properly embedded interval $\Gamma_0$ in $\Omega$. Either:
\begin{enumerate}
\item[(a)] $T=\infty$, in which case $\Gamma_t$ converges smoothly as $t\to \infty$ to a chord in $\Omega$ which meets $\pd\Omega$ orthogonally; or 
\item[(b)] $T<\infty$, in which case $\Gamma_t$ converges uniformly to some $z\in \pr\Omega$, and
\[
\tilde\Gamma_t\doteqdot \frac{\Gamma_t -z}{\sqrt{2(T-t)}}
\]
converges uniformly in the smooth topology as $t\to T$ to the unit semicircle in $T_z\Omega$.
\end{enumerate}
\end{theorem}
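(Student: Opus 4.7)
The plan is to combine the extended chord-arc bounds of the previous section with a blow-up analysis at the final time to identify the limit in each case. Throughout, I would use that $\bs{L}(t)$ is monotone nonincreasing with $\tfrac{d}{dt}\bs{L} = -2\int_{\Gamma_t}\kappa^2\,ds$, so the limit $\bs{L}_T\doteqdot\lim_{t\to T}\bs{L}(t)\ge 0$ always exists. The proof splits naturally along the dichotomy $T<\infty$ versus $T=\infty$.

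For case (b) ($T<\infty$), I would first show $\bs{L}_T=0$: otherwise the crude chord-arc estimate (the second theorem of the previous section) applies with uniform constants, yielding a uniform positive lower bound on $\bs{\psi}/\bs{L}$, which together with Proposition \ref{prop:boundary-avoidance} and standard parabolic regularity for free boundary CSF (bootstrapped from curvature bounds derived from chord-arc non-degeneracy) would allow smooth extension past $T$, contradicting maximality. It follows that $\Gamma_t$ shrinks uniformly to a single point $z$, which must lie on $\pr\Omega$ since the endpoints of $\Gamma_t$ are pinned there. To identify the limiting shape I would perform the extrinsic parabolic rescaling $\tilde\Gamma_t\doteqdot(\Gamma_t-z)/\sqrt{2(T-t)}$, which satisfies the rescaled curve shortening flow in a dilated domain flattening to the halfplane $T_z\Omega$. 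The first chord-arc estimate is scale-invariant, hence forbids collapse of $\tilde\Gamma_t$; together with Huisken's monotonicity formula (bounding the density at $z$) and interior regularity one extracts smooth subsequential limits as embedded free-boundary self-shrinkers in $T_z\Omega$ meeting $\pr T_z\Omega$ orthogonally. Reflecting across $\pr T_z\Omega$ then produces closed embedded CSF self-shrinkers in $\R^2$; by the classification of embedded closed CSF self-shrinkers (the only one being the unit circle), the limit must be the unit semicircle. Uniqueness of the limit (independence of subsequence) then follows from Huisken's monotonicity combined with the chord-arc control, via a standard connectedness argument on the space of tangent flows.

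For case (a) ($T=\infty$), integrating $\tfrac{d}{dt}\bs{L}$ over $[0,\infty)$ gives $\int_0^\infty\!\!\int_{\Gamma_t}\kappa^2\,ds\,dt<\infty$, so there exists a sequence $t_n\to\infty$ along which $\int_{\Gamma_{t_n}}\kappa^2\,ds\to 0$. A preliminary step is to rule out $\bs{L}(t)\to 0$: if this occurred, an intrinsic (curvature-based) blow-up at a sequence of maximum-curvature spacetime points would produce an ancient CSF solution with scale-invariant chord-arc control in either the plane or a halfplane, and the classification of such ancient solutions (together with the fact that the flow remains in a fixed compact subset of $\Omega$) would give a contradiction in infinite time. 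With $\bs{L}_\infty\doteqdot\lim_{t\to\infty}\bs{L}(t)>0$ in hand, the first chord-arc estimate and Proposition \ref{prop:boundary-avoidance} provide uniform non-degeneracy and boundary separation, so $\int_{\Gamma_{t_n}}\kappa^2\to 0$ upgrades (via interpolation and higher-order CSF regularity) to smooth subsequential convergence of $\Gamma_{t_n}$ to a stationary embedded curve, which can only be a critical chord. To promote subsequential convergence to full convergence of $\Gamma_t$ to a \emph{single} chord, I would invoke a \L ojasiewicz--Simon inequality for the length functional at a critical chord: in a neighborhood of any critical chord the flow may be written as a graph over it, and the analyticity of the arclength functional then forces a unique limit.

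The main obstacle is the uniqueness assertion in case (a): the chord-arc estimate prevents collapse and yields subsequential smooth convergence to \emph{some} critical chord, but, as flagged in the introductory footnote, non-uniqueness is known to occur in more general Riemannian settings, so singling out one limit requires a genuinely separate input (such as the \L ojasiewicz--Simon step) beyond the maximum-principle estimates developed earlier. In case (b), the corresponding technical difficulty is the classification of the blow-up limit and the independence of the rescaling sequence; both are handled cleanly by the reflection trick combined with the scale-invariant chord-arc bound and the monotonicity formula.
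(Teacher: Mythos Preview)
Your case (b) is essentially the paper's extrinsic blow-up route: parabolic rescaling, Huisken/Edelen monotonicity to produce self-shrinking tangent flows, reflection across $\pr T_z\Omega$ to reduce the shrinker classification to the closed planar case, and the scale-invariant chord-arc bound to exclude higher multiplicity. (Your step $\bs L_T=0$ is correct in outline, though ``standard parabolic regularity from chord-arc non-degeneracy'' hides a blow-up argument; the chord-arc bound by itself does not control $\kappa$.)

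The substantive divergence---and a genuine gap under the stated hypotheses---is the uniqueness argument in case (a). You propose \L ojasiewicz--Simon, invoking analyticity of the length functional. But the theorem assumes only $\pr\Omega\in C^2$: the free-boundary constraint is then merely $C^2$, so the configuration space is not analytic and the standard \L ojasiewicz--Simon inequality is unavailable. Critical chords need not be isolated (in a disk, every diameter is critical), so one cannot sidestep this with a hyperbolicity argument either. The paper takes a different route using Sturmian zero-counting: the height of $\Gamma_t$ over a subsequential limit chord $\sigma$ satisfies the heat equation with a boundary condition whose conormal derivative vanishes exactly at boundary zeroes, so by Angenent's theory the endpoints of $\Gamma_t$ cross those of $\sigma$ at most finitely many times; a parallel argument applied to $\kappa$ (via its evolution equation and Robin boundary condition) shows the endpoints change direction at most finitely often. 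Together these force the endpoints to converge, pinning down $\sigma$ uniquely. This argument needs only the $C^2$ regularity assumed.

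Two smaller points on case (a). First, your chord-arc invocations are not uniform in $t$: the sharp estimate requires $L(1+C)$ small, while the crude estimate carries a factor $e^{-4\pi^2 T/\bs L_T^2}$ that degenerates as $T\to\infty$; in fact the paper's proof of case (a) does not use the chord-arc bound at all. Second, ruling out $\bs L(t)\to 0$ is easier than a blow-up: if $L$ becomes small, enclose $\Gamma_t$ by a short convex arc meeting $\pr\Omega$ orthogonally, which by Stahl's theorem contracts in finite time, contradicting $T=\infty$ via the avoidance principle.
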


\subsection{Long time behaviour} We first address the long-time behaviour. %Note that in part (a) we include uniqueness of the limiting chord, which is a subtle issue. Indeed, White has described an example of closed curve shortening flow in a three-manifold with non-unique limiting behaviour as $t\to \infty$ (see \cite[Remark 4.2]{WhiteNotesEdelen}). On the other hand, Gage \cite{MR1046497} showed that closed curve shortening flow on the round $S^2$ does converge to a unique limiting geodesic.  More generally, on a closed Riemannian surface, Grayson \cite{MR979601} proved that a closed curve shortening flow always subconverges to a closed geodesic as $t\to\infty$ if $T=\infty$, but uniqueness of the limiting geodesic appears to remain open.

\begin{proof}[Proof of Theorem \ref{thm:fbGrayson} part (a)]

First recall that, since $\Omega$ is convex, $\Gamma_t$ remains in some compact subset $K\subset \Omega$ for all time.%, since (stationary) half-lines meeting $\pd\Omega$ in acute angles (with respect to the side on which $\Gamma_t$ lies) are barriers. %\textcolor{red}{surely the latter is not actually true if $\Omega$ is a halfspace? [ML: Take two normal lines to the halfspace which lie on each side of the initial curve, and really far away from the solution. Now tilt them ever so slightly inwards to make a gigantic ``teepee''. Note that these barriers are not moving in time.]}%\textcolor{red}{(JZ: just writing this for myself, maybe there is a subtlety, maybe not)} \textcolor{blue}{We assume that $\Omega$ is not a half-plane, since the result in that case follows from Gage-Hamilton-Grayson by reflection [or a barrier formed from two lines meeting $\pd\Omega$ in acute angles]. Now observe that $\Gamma_t$ must remain in some compact subset $K\subset \Omega$. Indeed, if $\Omega$ is not itself compact, then by convexity the turning angle between the ends is strictly less than $\pi$. In particular, there are line segments $\mathcal{L}_i$ such that the compact regions $K_i$ bounded by $\pr\Omega$ and $\mathcal{L}_i$ have acute interior angles, and exhaust $\Omega$. Each of these line segments is a barrier for the free-boundary curve shortening flow. }

Next, observe that the length approaches a positive limit as $t\to\infty$. Indeed, a limit exists due to the monotonicity
\ba
\frac{dL}{dt}={}&-\int\kappa^2\,ds\label{eq:evolve length}\\
\le{}&0\,,\nonumber
\ea
and the limit cannot be zero: If it were zero, then we could eventually enclose $\Gamma_t$ by a small convex arc which meets $\pd\Omega$ orthogonally. The latter would contract to a point on the boundary in finite time (in accordance with Stahl's theorem), whence the avoidance principle would force $\Gamma_t$ to become singular in finite time, contradicting $T=\infty$. 

Then integrating \eqref{eq:evolve length} from time $0$ to $\infty$, we find that% $\kappa$ converges to zero in $L^2$ along a sequence of times $t_j\to\infty$. %It follows that the curve shortening flows $\{\Gamma^s_t\doteqdot\Gamma_{t+s}\}_{t\in[-s,\infty)}$ converge ``weakly'' along a subsequence of times $s_j\to\infty$...
%In fact
, for every $\varepsilon>0$, we can find $t_\varepsilon<\infty$ such that
\begin{equation}\label{eq:aeL2est}
\int_{\Gamma_t}\kappa^2\,ds<\varepsilon
\end{equation}
for \emph{almost every} $t\ge t_\varepsilon$. We can bootstrap this to full convergence as follows (cf. \cite{MR1046497,MR979601}): integrating by parts and applying the boundary condition yields
\bann
\frac{d}{dt}\int_{\Gamma_t}\kappa^2\,ds={}&\int_{\Gamma_t}\left(2\kappa(\Delta\kappa+\kappa^3)-\kappa^4\right)\\
%={}&2\sum_{\pd\Gamma_t}\kappa\cd\kappa\cdot N^S-2\int_{\Gamma_t}\vert\cd\kappa\vert^2\,ds+\int_{\Gamma_t}\kappa^{4}\\
={}&2\sum_{\pd\Gamma_t}\kappa^S\kappa^2-2\int_{\Gamma_t}\vert\cd\kappa\vert^2\,ds+\int_{\Gamma_t}\kappa^{4}\\
\le{}&2C\sum_{\pd\Gamma_t}\kappa^2-2\int_{\Gamma_t}\vert\cd\kappa\vert^2+\max_{\Gamma_t}\kappa^2\int_{\Gamma_t}\kappa^2\,,
\eann
where $C\doteqdot \max_{S\cap K}\kappa^S$, $S=\pr\Omega$. Since (by Stahl's theorem, say) $\min_{\Gamma_t}\vert\kappa\vert=0$ for each $t$, the fundamental theorem of calculus and the H\"older inequality yield
\bann
\max_{\Gamma_t}\kappa^2\le{}&\left(\int_{\Gamma_t}\vert\cd\kappa\vert\right)^2\le L\int_{\Gamma_t}\vert\cd\kappa\vert^2\le L_0\int_{\Gamma_t}\vert\cd\kappa\vert^2\,,
\eann
while the fundamental theorem of calculus and the Cauchy--Schwarz inequality yield
\bann
2C\sum_{\pd\Gamma_t}\kappa^2\le2C\int_{\Gamma_t}\vert\cd\kappa^2\vert%={}&2\max_{\pd\Omega}\kappa^S\int_{\Gamma_t}2\vert\kappa\cd\kappa\vert\\
\le{}&4C^2 \int_{\Gamma_t}\kappa^2+\int_{\Gamma_t}\vert\cd\kappa\vert^2\,.
\eann
Thus,
\ba\label{eq:L2 subexp growth}
\frac{d}{dt}\int_{\Gamma_t}\kappa^2\,ds\le{}&4C^2 \int_{\Gamma_t}\kappa^2+\left(L_0\int_{\Gamma_t}\kappa^2-1\right)\int_{\Gamma_t}\vert\cd\kappa\vert^2\,.
\ea
%\textcolor{red}{(here i've just spelled out the almost monotonicity, and pointed out that you don't need induction as you already have the estimate on a dense set of times) Let $\epsilon \in (0,\frac{1}{2L_0})$ and suppose that $t_0$ is such that $\int_{\Gamma_{t_0}} \kappa^2 <\frac{\epsilon}{2}$. By continuity, there exists $\delta_{t_0}>0$ (is it clear that this was uniform in $\epsilon$? [I think this is not the right way to argue. A posteriori, the delta you define below just works]) such that $\int_{\Gamma_{t}} \kappa^2 < \frac{1}{L_0}$ for $t\in[t_0,t_0+\delta_{t_0}$, and hence the last term in \eqref{eq:L2 subexp growth} is nonpositive. Then for $t_0 \leq t< t_0+\delta_{t_0}]$, we have $\int_{\Gamma_t} \kappa^2 \leq e^{4C_S^2(t-t_0)}\int_{\Gamma_{t_0}} \kappa^2 =e^{4C_S^2(t-t_0)}\frac{\epsilon}{2}.$ In particular, there is $\delta'_{t_0}>0$ such that $\int_{\Gamma_t} \kappa^2 <\epsilon$ for any $t\in [t_0, t_0+\delta'_{t_0})$. Since $\int_{\Gamma_{t}} \kappa^2 <\frac{\epsilon}{2}$ for a.e. $t > t_\epsilon$, it follows that indeed $\int_{\Gamma_{t}} \kappa^2 <\frac{\epsilon}{2}$ for all sufficiently large $t$.}
%It is now a straightforward exercise \textcolor{red}{(JZ: i think this is an unnecessary flex) (ML: true. Grayson just says something like this and moves on) to deduce that $\kappa\to 0$ in $L^2$ as $t\to\infty$:} 
Now, given any $\varepsilon\in (0,\frac{L_0}{2})$ we can find $t_\varepsilon$ such that \eqref{eq:aeL2est} holds for almost every $t\ge t_\varepsilon$. But then by \eqref{eq:L2 subexp growth} there is a a dense set of times $t'\ge t_\varepsilon$ such that $\int_{\Gamma_{t}}\kappa^2\,ds\le 2\varepsilon$ for \emph{every} $t\in [t', t'+\delta]$, where $\delta\doteqdot\frac{\log2}{4C^2}>0$. It follows that \[\int_{\Gamma_t} \kappa^2 \,ds \to 0\] as claimed. 

In particular, with respect to an arclength parametrization, the $W^{2,2}$ norm of $\gamma(\cdot,t):[0,L(t)]\to\Omega$ is bounded independent of $t$. Reparametrizing by a family of uniformly controlled diffeomorphisms $\phi(\cdot,t):[0,L(t)]\to [0,1]$, we obtain a family of embeddings $\tilde\gamma(\cdot,t)\doteqdot \gamma(\phi^{-1}(\cdot,t),t)\in W^{2,2}([0,1];\R^2)$ with uniformly bounded $W^{2,2}$-norm and $\vert\tilde\gamma'\vert$ uniformly bounded from below. Since the Sobolev embedding theorem then implies uniform bounds in $C^{1,\alpha}([0,1];\R^2)$ for every $\alpha<\frac{1}{2}$, the Arzel\`a--Ascoli theorem yields, for any sequence of times $t_j\to\infty$, a subsequence along which $\tilde\gamma(\cdot,t_j)$ converges in $C^{1,\alpha}([0,1];\R^2)$, for every $\alpha<\frac{1}{2}$, to a limit immersion $\gamma_{\infty}\in W^{2,2}([0,1];\R^2)$ satisfying $\kappa=0$ in the weak sense and orthogonal boundary condition. In particular, $\gamma_\infty$ must parametrise a straight line segment which meets $\pr \Omega$ orthogonally; we call such a segment a \textit{critical chord}.

We need to show that the limit chord, which we denote by $\sigma$, is unique. To achieve this, we will show that the \emph{endpoints} of $\Gamma_t$ converge to those of $\sigma$.

\begin{claim}\label{claim:finite crossing}
The endpoints of $\Gamma_t$ cross those of $\sigma$ at most finitely many times.
\end{claim}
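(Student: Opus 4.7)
The plan is to deploy a Sturm--Angenent-type intersection argument, using the limit chord $\sigma$ itself as a stationary comparison. The key observation is that $\sigma$ is a stationary solution of the free boundary curve shortening flow \eqref{eq:fb-csf}: its curvature vanishes identically and, by construction, it meets $\pd\Omega$ orthogonally. Thus we may compare the evolving curve $\Gamma_t$ to the constant family $\{\sigma\}$ using (a free-boundary version of) Angenent's intersection principle.

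First I would invoke the relevant intersection theorem: for any two free boundary curve shortening flows in $\Omega$, the number of transverse interior intersections
\[
N(t) \doteqdot \#\bigl(\mathring{\Gamma}_t \cap \mathring{\sigma}\bigr)
\]
is finite for each $t>0$ and is non-increasing in $t$, with a strict drop whenever a non-transverse intersection forms. The interior statement is standard Sturm--Angenent theory; the adaptation to the orthogonal Neumann boundary condition follows by writing $\Gamma_t$ as a graph over $\sigma$ near the boundary and applying the classical parabolic zero-counting results to the resulting equation with a reflective boundary condition, as (for example) in Stahl's analysis of the free boundary flow.

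Second, I would show that each endpoint crossing forces a strict drop in $N(t)$. Parametrise $\pd\Omega$ near an endpoint $p_*$ of $\sigma$ by signed arclength, and let $\tau(t)$ denote the signed arclength from $p_*$ to the corresponding endpoint $p(t)$ of $\Gamma_t$; a crossing means $\tau$ changes sign. At a crossing time $t_0$, both $\Gamma_{t_0}$ and $\sigma$ emanate from $p_*$ in the common normal direction $N^S(p_*)$, by orthogonality. A local Taylor expansion of $\Gamma_{t_0}$ and $\sigma$ near $p_*$ (combined with the Hopf boundary-point lemma applied to the graphing function) shows that an interior intersection of $\Gamma_t$ with $\sigma$ is either created or destroyed in a neighbourhood of $p_*$ as $\tau$ passes through zero, so that $N(t_0^+) < N(t_0^-)$ by at least one. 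An alternative, more robust route that sidesteps the delicate boundary bookkeeping is to replace $\sigma$ by a small rotation $\sigma_\epsilon$ whose endpoints on $\pd\Omega$ lie strictly to one fixed side of those of $\sigma$; then endpoint crossings of $\Gamma_t$ past those of $\sigma$ produce strict drops in the \emph{interior} intersection count $\#(\mathring{\Gamma}_t \cap \mathring\sigma_\epsilon)$ (which one controls via the short-time free-boundary evolution starting from $\sigma_\epsilon$), and one then lets $\epsilon\to 0$.

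Combining these two steps, $N(t)$ is a non-negative integer which starts finite and decreases strictly at every endpoint crossing; hence only finitely many endpoint crossings can occur. The main obstacle in the proof is the rigorous accounting of intersection number near $\pd\Omega$ at a crossing time $t_0$, where $\Gamma_{t_0}$ and $\sigma$ share not only the boundary point $p_*$ but also the tangent direction $N^S(p_*)$ there; either the Hopf-type boundary analysis or the perturbation-and-limit argument just outlined resolves this, and the rest of the argument is essentially bookkeeping.
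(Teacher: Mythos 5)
Your proposal is correct and follows essentially the same route as the paper: the paper also runs a Sturm--Angenent argument against the stationary chord $\sigma$, implemented via the height function $y=\langle\gamma,N^\sigma\rangle$, which solves a linear heat equation with Neumann data $\langle N^\sigma,N^S\rangle$ vanishing exactly at crossing times, so that each endpoint crossing is a boundary zero forcing a strict drop in the (finite, non-increasing) Sturmian zero count. The one point to tighten is your phrase ``either created or destroyed'': for the count to be monotone you must rule out creation, which the paper does by showing $\partial\Omega$ is a strict zero \emph{sink} for $y$ (strong maximum principle forward in time, Hopf boundary-point lemma backward in time), and the same one-sidedness is needed in your intersection-number formulation.
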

\begin{proof}[Proof of Claim \ref{claim:finite crossing}]
Observe that the height function $y\doteqdot \langle \gamma ,  N^\sigma\rangle$, where $N^\sigma$ is a choice of unit normal to $\sigma$, satisfies
\begin{equation}
\label{eq:height}
\left\{\begin{aligned}(\pd_t-\Delta)y={}&0\;\;\text{in}\;\;\Gamma_t\setminus\pd\Gamma_t\\
 \langle \nabla y, N^S\rangle ={}&\langle N^\sigma, N^S\rangle  \;\;\text{on}\;\;\pd\Gamma_t\,.
\end{aligned}\right.
\end{equation}
In particular, the conormal derivative $\langle \nabla y , N^S\rangle$ vanishes at any boundary zero of $y$. We claim that, unless $\{\Gamma_t\}_{t\in[0,T)}$ is the stationary chord $\Gamma_t\equiv \sigma$, the boundary of $\Omega$ is a \emph{strict zero sink} for $y$; that is, 
\begin{itemize}
\item if $p\in\pd\Gamma_{t_0}$ is a zero of $y$ at a positive time $t_0$, then we can find $r>0$ such that $\Gamma_t\cap B_r$ contains a zero of $y$ for all $t\in(t_0-r^2,t_0)$ but not for $t\in(t_0,t_0+r^2)$.
\end{itemize}
Indeed, if $p\in\pd\Gamma_{t_0}$ is a zero of $y$ for $t_0>0$, then $\Gamma_{t_0}$ lies locally (and nontrivially) to one side of $C$ in a neighbourhood $B$ of $p$ (above, say). But then, since $\cd y\cdot N^S|_p=0$, the strong maximum principle implies that $y>0$ in $B$ for a short time. On the other hand, if we can find $r>0$ such that $\Gamma_t\cap B_r(p)$ does not contain a zero of $y$ for $t\in (t_0-r^2,t_0)$, then the Hopf boundary point lemma implies that $\cd y\cdot N^S<0$ at $p$, which contradicts (\ref{eq:height}). Since, with respect to a parametrization $\tilde{\gamma}:[0,1]\times[0,T)\to \Omega$ for $\{\Gamma_t\}_{t\in[0,T)}$ over a fixed interval, $y$ satisfies a linear diffusion equation with suitably bounded coefficients, it now follows from Angenent's Sturmian theory \cite{Ang88} that the zero set of $y$ is finite and non-increasing at positive times (and in fact strictly decreasing each time $y$ admits a degenerate or boundary zero). The claim follows. %In particular, this means that the endpoints of $\Gamma_t$ cross those of $D$ at most a finite number of times. 
\end{proof}

\begin{claim}\label{claim:finite direction change}
The endpoints of $\Gamma_t$ change direction at most finitely many times.
\end{claim}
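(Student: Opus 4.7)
The plan is to identify a direction change of an endpoint along $S=\pr\Omega$ with a sign change of the curvature at that endpoint, and then control the latter by the same Sturmian argument used in Claim \ref{claim:finite crossing}, now applied to $\kappa$ in place of $y$.

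First I would compute the geometric velocity of an endpoint. At $z\in\pd\Gamma_t$, the orthogonal contact $\inner{N}{N^S}=0$ forces $N(z,t)$ to be tangent to $S$; hence $N=\pm T^S_z$, and the flow gives
\[
\dot{z}(t)=-\kappa(z,t)N(z,t)=\mp\,\kappa(z,t)\,T^S_{z(t)}\,.
\]
Thus $z(t)$ slides along $S$ with signed speed $\mp\kappa(z,t)$, and reverses direction precisely when $\kappa$ at $z$ changes sign. It therefore suffices to show that, at each endpoint, $\kappa$ changes sign only finitely many times.

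Next I would invoke the classical curvature evolution $\ho\kappa=\kappa^3$ together with Stahl's boundary condition $\cd_T\kappa=\kappa^S\kappa$ at $\pd\Gamma_t$ (see \cite{Stahl96a}). Pulling back to the fixed interval $[0,1]$ via a smooth family of reparametrizations (e.g.\ uniform arclength), $\kappa$ becomes a classical solution of a linear parabolic equation with smooth coefficients; by part (a), $\kappa$ is uniformly bounded on $[t_0,\infty)$ for any $t_0>0$, so the zeroth-order coefficient obtained from writing $\kappa^3=\kappa^2\cdot\kappa$ is likewise bounded. The Robin-type boundary condition has the key feature that $\cd_T\kappa=0$ automatically at any boundary zero of $\kappa$, so every boundary zero of $\kappa$ is of degenerate type.

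Finally, I would apply Angenent's Sturmian theorem \cite{Ang88} exactly as in the proof of Claim \ref{claim:finite crossing}: the zero set of $\kappa(\cdot,t)$ is finite for each $t>t_0$ and non-increasing in $t$, with strict decrease each time $\kappa$ develops either a degenerate interior zero or a boundary zero. It follows that only finitely many boundary-zero events can occur throughout $[t_0,\infty)$, and therefore each endpoint changes direction at most finitely many times. The only point requiring care is to check that the reparametrized curvature equation, together with the Robin-type boundary condition, falls within the scope of Angenent's theorem; this should be the main (mild) technical obstacle and is essentially identical to the analogous verification in Claim \ref{claim:finite crossing}.
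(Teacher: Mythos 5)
Your proposal is correct and follows essentially the same route as the paper: the paper likewise uses the evolution equation \eqref{eq:evolvekappa} with the Robin boundary condition $\langle\nabla\kappa,N^S\rangle=\kappa^S\kappa$ (so the conormal derivative vanishes at any boundary zero of $\kappa$) and then applies the same Angenent--Sturmian zero-counting argument as in Claim \ref{claim:finite crossing} to conclude that $\kappa$ has only finitely many boundary sign changes. Your explicit identification of an endpoint's direction reversal with a sign change of $\kappa$ at that endpoint is exactly the (implicit) geometric input the paper relies on.
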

\begin{proof}[Proof of Claim \ref{claim:finite direction change}]
Recall that the curvature satisfies \cite{Stahl96a}
\begin{equation}\label{eq:evolvekappa}
\left\{\begin{aligned}(\pd_t-\Delta)\kappa={}&\kappa^3\;\;\text{in}\;\;\Gamma_t\\
\langle \nabla \kappa, N^S\rangle={}&\kappa^S\kappa\;\;\text{on}\;\;\pd\Gamma_t\,.
\end{aligned}\right.
\end{equation}
In particular, the conormal derivative $\langle \nabla \kappa , N^S\rangle$ vanishes at any boundary zero of $\kappa$. Thus, applying essentially the same argument as in Claim \ref{claim:finite crossing}, we find that the number of zeroes of $\kappa$ is finite and non-increasing at positive times, and strictly decreasing any time $\kappa$ admits a degenerate or boundary zero. The claim follows.
%This means that the endpoints of $\Gamma_t$ change direction at most a finite number of times, and hence the endpoints of $\Gamma_t$ converge \emph{monotonically} to those of $D$. We may now conclude that the limit chord is indeed unique. 
\end{proof}

Claims \ref{claim:finite crossing} and \ref{claim:finite direction change} imply that the endpoints of $\Gamma_t$ converge to some pair of limit boundary points, which must then be the endpoints of $\sigma$, and we may thus conclude that the limit chord is indeed unique. This proves convergence of $\tilde\gamma(\cdot,t)$ to a chord in $C^{1,\alpha}([0,1];\R^2)$ as $t\to \infty$. In particular, we may eventually write $\Gamma_t$ as a graph over the limit chord with uniformly H\"older controlled height and gradient, so the Schauder estimate \cite[Theorem 4.23]{Lieberman} %for quasilinear parabolic partial differential equations with transversal Neumann boundary condition (cf. \cite{Stahl96a}) 
and interpolation yield convergence in the smooth topology.
\end{proof}

We present two routes to case (b) of Theorem \ref{thm:fbGrayson}: one using (smooth) intrinsic blowups in the spirit of Hamilton \cite{HamiltonPinched,MR1369140} and Huisken \cite{Huisken96}; and one using (weak) extrinsic blowups in the spirit of White \cite{Wh03} (cf. Schulze \cite{Schulze}). 
We present the extrinsic method first, as (utilising the powerful theory of free-boundary Brakke flows developed by Edelen \cite{Edelen}) it quickly reduces the problem to ruling out multiplicity of blowup limits, which the chord-arc bound easily achieves. The intrinsic method is more elementary but requires the adaptation of a number of (interesting) results to the free boundary setting (for instance a monotonicity formula for the total curvature).

\subsection{Extrinsic blowup}

We follow the treatment of Schulze \cite{Schulze} (taking care to explain the modifications required in order to contend with the boundary condition). We begin by classifying the tangent flows following Edelen's theory of free boundary Brakke flows \cite{Edelen} (cf. \cite[Theorem 6.4]{Edelen} and \cite[Theorem 6.9]{Buckland}).

%We begin by classifying the tangent flows (cf. \cite[Theorem 6.4]{Edelen} and \cite[Theorem 6.9]{Buckland}).

We say that a spacetime point $(x_0,t_0)$ is reached by a free boundary curve shortening flow if (for instance) its Gaussian density is positive: $\Theta(x_0,t_0)>0$. 

\begin{lemma}
\label{lem:tangent-flow}
Let $(x_0,t_0)$ be a point in spacetime reached by the free boundary curve shortening flow $\{\Gamma_t\}_{t\in I}$. For any sequence of scales $\lambda_i\to \infty$, there is a subsequence such that 
\[\{\Gamma_t^{i}\doteqdot\lambda_i(\Gamma_{t_0+\lambda_i^{-2}t}-x_0))\}_{t\in [-\lambda_i^2t_0,0)} \to \{\Gamma^\infty_t\}_{(-\infty,0)}\] graphically and locally smoothly, with multiplicity 1, where $\{\Gamma_t^\infty\}_{t\in(-\infty,0)}$ is one of the following: 
\begin{enumerate}[(a)]
\item a static line through the origin; %{\color{cyan}[Stationary means ``not moving'' while static means ``unchanging''. Since these guys are technically (by derivation) shrinkers, they are not really ``static''.]} \textcolor{red}{i believe `static' is customary in the work of White}
\item a static half-line from the origin;
\item a shrinking semicircle.
\end{enumerate}
\end{lemma}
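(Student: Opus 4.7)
The plan is to follow the Brakke-flow approach of White \cite{Wh03} and Schulze \cite{Schulze}, adapted to the free boundary setting via Edelen's theory \cite{Edelen}. First I would observe that the rescaled ambient domains $\Omega_i \doteqdot \lambda_i(\Omega - x_0)$ converge in $C^{1,\alpha}_{\mathrm{loc}}$ either to $\R^2$ (if $x_0 \in \mathring \Omega$) or to the tangent halfplane $T_{x_0}\Omega$ (if $x_0 \in \pr\Omega$), since $\pr\Omega$ is of class $C^2$. The rescaled flows $\{\Gamma^i_t\}$ inherit locally bounded mass ratios from Huisken's monotonicity formula in Edelen's free boundary version, so the compactness theorem of \cite{Edelen} for integer rectifiable free boundary Brakke flows yields a subsequential Brakke-flow limit $\{\Gamma^\infty_t\}_{t \in (-\infty,0)}$ in the limit ambient.

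Next, the equality case of Huisken monotonicity forces $\{\Gamma^\infty_t\}$ to be self-similarly shrinking about the spacetime origin. I would then invoke the classification of embedded $1$-dimensional self-shrinking free boundary Brakke flows: in $\R^2$ these are lines through the origin and the shrinking circle, while in the halfplane with orthogonal boundary condition they are half-lines through the origin perpendicular to $\pr T_{x_0}\Omega$ and shrinking semicircles centred on $\pr T_{x_0}\Omega$ (cf.\ \cite{Buckland}; any Abresch--Langer-type solutions are not embedded). The interior shrinking-circle option is excluded by preservation of the embedded-interval topology of $\Gamma_t$, since no closed loop can arise as a tangent flow of an embedded arc.

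The final step is to promote the varifold limit to multiplicity one using the reflected chord-arc estimate from Section \ref{sec:evolution}. The bound $\bs d / \bs \ell \ge c > 0$ is scale-invariant under parabolic rescaling and is inherited by any smooth limit of embedded curves: were the convergence to hold with multiplicity $\ge 2$, one could extract approximating pairs $\bs{x}^i, \bs{y}^i$ from distinct sheets with $\bs d(\bs x^i, \bs y^i) \to 0$ while $\bs \ell(\bs x^i, \bs y^i)$ stays bounded away from zero, contradicting the uniform bound. Once multiplicity one is established, Brakke's local regularity theorem in the interior together with Edelen's boundary regularity for free boundary Brakke flows upgrades the convergence to locally smooth graphical convergence. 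The main technical obstacle I anticipate is the careful bookkeeping required to transport the completed chord-arc bound into the varifold limit---in particular, the identification of arclengths on the approximating embedded curves with the lifts to the formal double used to define $\bs \ell$---together with the verification that Edelen's boundary compactness and regularity theorems apply uniformly along the rescaling, given the $C^{1,\alpha}_{\mathrm{loc}}$ convergence of $\pr\Omega_i$ to the tangent line.
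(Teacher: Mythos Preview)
Your overall strategy---Edelen's Brakke-flow compactness, self-similarity from monotonicity, chord-arc for multiplicity one, then local regularity---matches the paper's. The main technical divergence is in how the self-shrinker is identified. You propose to classify ``embedded 1-dimensional self-shrinking free boundary Brakke flows'' directly, but the Brakke limit is a priori only an integral varifold flow: it could, in principle, carry higher multiplicity or have junction-type singular structure (stationary integral $1$-varifolds in the Gaussian metric need not be smooth curves), so invoking an ``embedded'' classification before establishing multiplicity one is circular. The paper sidesteps this by extracting from Edelen's monotonicity formula an $L^2_{\mathrm{loc}}$ curvature bound at almost every time, then using Sobolev embedding on arclength parametrisations to obtain $C^{1,\alpha}_{\mathrm{loc}}$ convergence to a \emph{smooth} immersed self-shrinker at a fixed time slice; this reduces the classification to the elementary smooth case. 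The chord-arc bound is then applied, as you suggest, to force multiplicity one in the noncompact (line/half-line) case, while in the compact case global $C^{1,\alpha}$ convergence of interval-parametrised maps forces the limit to be an interval, hence the semicircle (this is the precise version of your topological argument excluding the full circle). One further point you omit: the paper uses the boundary avoidance estimate (Proposition~\ref{prop:boundary-avoidance}, itself a consequence of the chord-arc bound) to rule out the barrier line $\partial\Pi$ as a possible limit; this deserves mention, since $\partial\Pi$ is formally a self-shrinker in the half-plane.
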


\begin{proof}
By \cite[Theorems 4.10 and 6.4]{Edelen}, there is certainly a subsequence along which the flows $\{\Gamma_t^i\}$ converge as free boundary Brakke flows to some self-shrinking free boundary Brakke flow $(\mu_t)$. 

Moreover, a slight modification of the proof of Edelen's reflected, truncated monotonicity formula \cite[Theorem 5.1]{Edelen} reveals that
\begin{equation}\label{eq:RHS of MF}
%f_i\doteqdot
\int_{\tilde\Gamma{}^i_{t}}\left\vert\frac{\tilde x^\perp}{-2t}+\tilde\kappa\tilde\nu\right\vert^2\tilde\phi\tilde\rho+\int_{\Gamma^i_t}\left\vert\frac{x^\perp}{-2t}+\kappa\nu\right\vert^2\phi\rho\to 0\,,
\end{equation}
in $L^1_{\mathrm{loc}}((-\infty,0))$. (Indeed, the second term in the penultimate line of the estimate at the bottom of page 115 of \cite{Edelen} is discarded by Edelen, and one need not discard all of the first term on that line in producing the estimate on the top of page 116.) This gives a corresponding (extrinsic) $L^2_{\mathrm{loc}}$ bound for $\kappa$: for almost every $t\in(-\infty,0)$ and for every $R>0$,
\[
\int_{\Gamma^i_t\cap B_R}\kappa^2\le C
\]
independent of $i$. 

Consider such a time $t=\tau$. Since $(x_0,t_0)$ is reached by the flow, there must exist points $p_i \in \Gamma^i_\tau$ converging to some limit  $p$. We may consider an arclength parametrisation $\gamma^i_\tau$ of each $\Gamma^i_\tau$ such that $\gamma^i_\tau(0)=p_i$. By applying the Sobolev embedding theorem (cf. the proof of case (a) of Theorem \ref{thm:fbGrayson} above), there will be a further subsequence along which the maps $\gamma^i_\tau$ converge, in the $C^{1,\alpha}_{\mathrm{loc}}$ topology, to a proper and connected limiting immersion $\gamma_\tau^\infty:M_\infty\to \Pi$ of class $W^{2,2}_{\mathrm{loc}}$. By \eqref{eq:RHS of MF}, this limit satisfies
\[
\vec\kappa=\frac{x^\perp}{-2\tau}
\]
in the weak sense, where $\Pi =T_{x_0}\Omega$ is the whole plane if $x_0\in\mathring{\Omega}$ or the closed halfspace $\{p\in\R^2:\langle p, N^S(x_0)\rangle\le 0\}$ if $x_0\in S=\pd\Omega$. By the $C_{\mathrm{loc}}^{1,\alpha}$ convergence, the limit $\Gamma^\infty_\tau\doteqdot\gamma_\tau^\infty(M_\infty)$ meets $\pd\Pi_{x_0}$ orthogonally at any boundary points, so the Schauder estimates \cite[Theorems 6.2 and 6.30]{GilbargTrudinger} imply that the limit immersion $\gamma^\infty_\tau$ is smooth. Moreover, the boundary avoidance estimate implies that $\mathring{\Gamma}^\infty_\tau\subset\mathring{\Pi}$ (which in particular rules out the barrier $\pr \Pi$ as a limit). 

Now, the only smoothly embedded curves in $\mathbb{R}^2$ or $\mathbb{R}^2_+$ which satisfy the free boundary self-shrinker equation are the (half-)lines through the origin and the (semi)circle of radius $\sqrt{2}$. (Indeed, for $\mathbb{R}^2_+$ a standard reflection argument, as in Proposition \ref{prop:ancient} below, reduces the classification to the planar case \cite{MR2931330}.) 

If $\Gamma^\tau_\infty$ is compact, then the embeddings $\gamma^i_\tau$ converge globally in the $C^{1,\alpha}$ topology. %\textcolor{red}{(i don't think chord-arc is even needed to check the multiplicity here?)} 
In particular, $\Gamma^\infty_\tau$ is topologically a compact interval. The only possibility is that $x_0\in\pd\Omega$, so that $\Pi$ is a half-plane, and $\Gamma^\infty_\tau$ is the semicircle in $\Pi$ centred at the origin with radius $\sqrt{-2\tau}$. %Furthermore, $\Gamma^i_\tau$ converges graphically (in the $C^{1,\alpha}$ topology) to $S_+$. 
Thus $(\mu_t)$ is the corresponding shrinking semicircle of multiplicity 1.

If $\Gamma^\infty_\tau$ is noncompact, then it must be a (half-)line. The $C^{1,\alpha}_{\mathrm{loc}}$ convergence means that for any $R>0$, the restriction $\gamma^i_\tau |_{\{|x|<R\}}$ converges to a (half-)line. Using our chord-arc estimate, it follows that there is some $c>0$ (independent of $R$) such that, for large $i$, we have \[\Gamma^i_\tau \cap B_{cR} = \mathrm{im}\left(\gamma^i_\tau |_{\{|x|<R\}}\right) \cap B_{cR}.\] In particular, this implies that $\Gamma^i_\tau$ converges, locally and graphically (in the $C^{1,\alpha}$ topology), to a (half-)line with multiplicity 1. In particular, $(\mu_t)$ must be a stationary (half-)line of multiplicity 1 (which is orthogonal to $\pr\Pi$).

In either case, local regularity for free boundary Brakke flows \cite[Theorem 8.1]{Edelen} implies that the flows $\{\Gamma^i_t\}$ converge locally and graphically, in the smooth topology, to $(\mu_t)$.
\end{proof}

\begin{proof}[Proof of Theorem \ref{thm:fbGrayson} part (b) (extrinsic method)]
By Edelen's local regularity theorem \cite[Theorem 8.1]{Edelen}, if any tangent flow to $\{\Gamma_t\}_{t\in [0,T)}$ at $(x_0,T)$ is a multiplicity one (half-)line, then $(x_0,T)$ is a smooth point of the flow. Since the flow becomes singular in finite time $T<\infty$, by Lemma \ref{lem:tangent-flow} there must be a point $x_0 \in \pr \Omega$ such that every tangent flow at $(x_0,T)$ converges smoothly, with multiplicity 1, to the shrinking semicircle in $\Pi_{x_0}$. %That is, up to a rotation sending $N^S(x_0)$ to $-e_2$, $\lambda(\Gamma_{T+\lambda^{-2}\tau}-x_0)$ converges as $\lambda\to\infty$, smoothly with multiplicity 1, to $\sqrt{-2\tau}S^1_+$. 
The result follows (fix any time and consider the scale factors $\lambda = \frac{1}{\sqrt{T-t}}$).
\end{proof}

\subsection{Intrinsic blowup}

We now follow the (smooth) ``type-I vs type-II'' blow-up argument of Huisken \cite{Huisken96}. %A number of the steps require additional care in the free boundary setting, however. 
%\subsection{A Sturmian property for the vertices}

We first exploit the classification of convex ancient planar curve shortening flows to classify smooth free boundary blow-ups. (Recall that a curve in a convex subset $\Omega$ of the plane is \emph{convex} if it is the relative boundary of a convex subset of $\Omega$.)

\begin{proposition}\label{prop:ancient}
%The only convex ancient free boundary curve shortening flows in the halfplane $\R^2_+$ are the shrinking round semicircles, the stationary (half-)lines and pairs of parallel (half-)lines parallel to $\pd\R^2_+$, the (half-)Grim Reapers which translate parallel to $\pd\R^2_+$, and the (half-)Angenent ovals.\footnote{Note that there are two geometrically distinct half-Angenent ovals: one oriented ``parallel'' to $\pd\R^2_+$ and one ``perpendicular'' to $\pd\R^2_+$, whereas the only admissible (closed) Angenent ovals are those oriented ``parallel'' to $\pd\R^2_+$.}
The only convex ancient free boundary curve shortening flows in the halfplane $\R^2_+$ are the shrinking round semicircles, the stationary (half-)lines and pairs of parallel (half-)lines, the (half-)Grim Reapers, and the (half-)Angenent ovals.\footnote{Note that there are two geometrically distinct half-Angenent ovals.}
\end{proposition}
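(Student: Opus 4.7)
The plan is to reduce the classification to the (by now well-known) classification of convex ancient CSF in the whole plane $\R^2$ by reflecting across the flat boundary $\pr\R^2_+$. Given a convex ancient FBCSF $\{\Gamma_t\}_{t\in(-\infty,T)}$ in $\R^2_+$, and letting $R:\R^2\to\R^2$ denote reflection across $\pr\R^2_+$, I would form the doubled flow $\tilde\Gamma_t\doteqdot \Gamma_t\cup R(\Gamma_t)$ and argue that $\{\tilde\Gamma_t\}$ is itself a smooth, convex, ancient CSF of the plane.

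The first key step is to verify that $\tilde\Gamma_t$ is smooth across the seam $\pr\R^2_+$. Orthogonal contact already provides $C^1$ gluing at boundary points. To upgrade to $C^\infty$ regularity I would exploit the flatness of the boundary ($\kappa^S\equiv 0$): by \eqref{eq:evolvekappa}, the curvature then satisfies the homogeneous Neumann condition $\pr_s\kappa = 0$ on $\pr\Gamma_t$. Inductively differentiating the evolution equation $(\pr_t - \pr_s^2)\kappa = \kappa^3$ (using it to exchange $\pr_t$ with $\pr_s^2$) together with the Neumann condition yields homogeneous Neumann conditions on all even-order $s$-derivatives of $\kappa$ and vanishing boundary values for all odd-order $s$-derivatives, which is precisely the compatibility needed for even reflection to produce a smooth extension. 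Alternatively, one may represent the flow locally as a graph over its tangent line at a boundary point, recast orthogonality as a homogeneous Neumann boundary condition for a quasilinear parabolic equation, and appeal to standard parabolic reflection regularity. Convexity of the reflected curve is immediate, as the mirror union of a convex region with itself across $\pr\R^2_+$ is convex.

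With $\tilde\Gamma_t$ established as a smooth convex ancient CSF in $\R^2$, I would invoke the classification of such flows: compact examples are the shrinking round circles and the Angenent ovals; complete connected noncompact examples are the stationary lines and the Grim Reapers; allowing disconnected solutions adds pairs of parallel stationary lines. It then remains to enumerate the $R$-invariant representatives: shrinking circles centred on $\pr\R^2_+$ give shrinking semicircles; Angenent ovals aligned along either of their two perpendicular axes of symmetry give the two geometrically distinct half-Angenent ovals; Grim Reapers with translation axis parallel or perpendicular to $\pr\R^2_+$ give the (half-)Grim Reapers; stationary lines either disjoint from or perpendicular to $\pr\R^2_+$ give the stationary (half-)lines; and symmetric parallel line pairs give the parallel (half-)line examples. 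The cases in which $\Gamma_t$ itself does not meet $\pr\R^2_+$ (e.g. a full Angenent oval or Grim Reaper lying in $\R^2_+$) are covered directly, since $\Gamma_t$ is then already a convex ancient CSF in $\R^2$.

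The principal obstacle is the smoothness of $\tilde\Gamma_t$ across the seam, which crucially requires $\pr\R^2_+$ to be flat: for a curved barrier, reflection would typically fail to produce even a $C^2$ curve, and a more intrinsic classification would be required. Once smoothness is in hand, the rest is essentially a bookkeeping of reflective symmetries of known ancient solutions.
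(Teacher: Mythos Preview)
Your proposal is correct and follows essentially the same route as the paper: reflect across $\pr\R^2_+$, use the flatness $\kappa^S\equiv 0$ together with the boundary condition \eqref{eq:evolvekappa} to show inductively that all odd-order arclength derivatives of $\kappa$ vanish at the boundary (so the even reflection is smooth), and then invoke the classification of convex ancient curve shortening flows in $\R^2$. The paper's proof is terser---it simply asserts the inductive vanishing and cites the planar classification---whereas you spell out the $C^1$ gluing, the inductive mechanism, and the enumeration of reflection-symmetric representatives, but the underlying argument is the same.
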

\begin{proof}
Let $\{\Gamma_t\}_{t\in[-\infty,\omega)}$ be a convex ancient free boundary curve shortening flow in $\R^2_+$ with nontrivial boundary on $\pd\R^2_+$. By differentiating the %boundary condition
%\[
%\inner{N}{e_2}|_{\pd\Gamma_t}=0
%\]
%with respect to $t$ and applying the 
evolution and boundary value equations \eqref{eq:evolvekappa} for $\kappa$, we find (by induction) that all of the odd-order derivatives of $\kappa$ vanish at $\pd\R^2_+$. We therefore obtain, upon doubling $\Gamma_t$ through (even) reflection across $\pd\R^2_+$, a convex ancient (boundaryless) curve shortening flow in the plane. So the claim follows from the classification from \cite{BLTcsf,DHScsf}.
\end{proof}

In order to ensure \emph{convex} blow-up limits, we will adapt a monotonicity formula of Altschuler \cite[Theorem 5.14]{Altschuler} to the free boundary setting. In order to achieve this, we first need to control the vertices of $\Gamma_t$ under the flow. 
\begin{lemma}\label{lem:Sturm}
Let $\{\Gamma_t\}_{t\in[0,T)}$ be a compact free boundary curve shortening flow in a convex domain $\Omega\subset\R^2$. Unless $\{\Gamma_t\}_{t\in[0,T)}$ is a stationary chord or a shrinking semicircle, the inflection points $\{p\in\Gamma_t:\kappa(p)=0\}$ and interior vertices $\{p\in\mathring{\Gamma_t}:\kappa_s(p)=0\}$ are finite in number 
%\begin{enumerate}
%\item[(a)] the inflection points $\{p\in\Gamma_t:\kappa(p)=0\}$ are finite and non-increasing in number, and %(and strictly decreasing at any time $\Gamma_t$ admits a degenerate inflection point)
%\item[(b)] the interior vertices $\{p\in\Gamma_t\setminus\pd\Gamma_t:\kappa_s(p)=0\}$ are finite and non-increasing in number %(and strictly decreasing at any time $t\in (t_\ast,T)$ for which $\Gamma_t$ admits a degenerate vertex).
%\end{enumerate}
for all $t>0$. The number of inflection points is non-increasing, and strictly decreases each time $\Gamma_t$ admits a degenerate or boundary inflection point.
%if $\Gamma_t$ admits a degenerate inflection point, then the number of inflection points of $\Gamma_{t_2}$ is less than the number of inflection points of $\Gamma_{t_1}$ for all $t_2>t_1$. (In particular, there are only finitely many such times).
\end{lemma}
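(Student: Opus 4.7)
Both assertions are Sturmian-type zero-counting statements, directly analogous to those invoked in Claims \ref{claim:finite crossing} and \ref{claim:finite direction change}. The strategy is to apply Angenent's theorem \cite{Ang88} (cf. \cite{MR501842}) to $\kappa$ and $\kappa_s$, viewed as solutions of linear parabolic equations on $\Gamma_t$ with bounded continuous coefficients (on any subinterval $[0,t_1]$ with $t_1<T$).

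For $\kappa$, the evolution equation \eqref{eq:evolvekappa} together with the Robin boundary condition $\la\cd\kappa,N^S\ra = \kappa^S\kappa$ already provides the required structure: with respect to a fixed parametrization $\gamma:[-1,1]\times[0,T)\to\Omega$, $\kappa$ satisfies a linear parabolic equation in one space variable with continuous bounded coefficients and smooth Robin boundary data, so Angenent's theorem applies. Since the Robin condition makes any boundary zero of $\kappa$ automatically a zero of $\cd\kappa\cdot N^S$, such zeros count as degenerate. We conclude that, for every $t>0$, the inflection points are finite in number, non-increasing in $t$, and strictly decreasing at any time $\Gamma_t$ admits a degenerate or boundary inflection point. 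This argument fails only if $\kappa(\cdot,t_0)\equiv 0$ for some $t_0$, in which case backward and forward uniqueness forces $\{\Gamma_t\}$ to be the stationary chord.

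For interior vertices, the commutator identity $[\pd_t,\pd_s]=\kappa^2\pd_s$ combined with \eqref{eq:evolvekappa} yields
\begin{equation*}
(\pd_t - \Delta)\kappa_s = 4\kappa^2\kappa_s\,,
\end{equation*}
again a linear parabolic equation with bounded continuous coefficients. The endpoint value $\kappa_s|_{\pd\Gamma_t}=\pm\kappa^S\kappa$ vanishes only when $\kappa$ admits a boundary zero or when the boundary point of $\Gamma_t$ sits on a portion of $\pd\Omega$ along which $\kappa^S\equiv 0$ (in which case $\kappa_s$ satisfies a Dirichlet boundary condition and Sturm's theorem applies directly). Outside of these moments, the endpoint values of $\kappa_s$ are nonzero and the interior form of Angenent's theorem gives finiteness and monotonicity of the interior zero set. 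Since the exceptional moments (boundary inflection points) are finite in number by the first part, the interior vertex count remains finite for all $t>0$. This reasoning breaks down only if $\kappa_s\equiv 0$ on $\mathring{\Gamma}_{t_0}$ for some $t_0$, which forces $\kappa$ to be constant on $\Gamma_{t_0}$; if zero, $\{\Gamma_t\}$ is the stationary chord, and otherwise $\Gamma_{t_0}$ is a circular arc meeting $\pd\Omega$ orthogonally at both endpoints. The orthogonal contact condition together with convexity of $\Omega$ then forces $\pd\Omega$ to contain a straight segment along which $\Gamma_{t_0}$ is a semicircle, and uniqueness identifies $\{\Gamma_t\}$ with the corresponding shrinking semicircle. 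The main subtlety is this final rigidity step, which precisely pins down the two exceptional configurations excluded from the hypothesis.
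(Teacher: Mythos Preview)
Your proposal is correct and follows essentially the same route as the paper: apply Angenent's Sturmian theory to $\kappa$ (with the Robin boundary data from \eqref{eq:evolvekappa}) and then to $\kappa_s$, using that the boundary value $\kappa_s|_{\pd\Gamma_t}=\pm\kappa^S\kappa$ changes sign only finitely often by the first step. Your treatment is in fact slightly more explicit than the paper's --- you write out the evolution $(\pd_t-\Delta)\kappa_s=4\kappa^2\kappa_s$ and address the rigidity step identifying the exceptional flows, whereas the paper simply cites \cite[Theorems~C and~D]{Ang88} and refers back to the argument of Claim~\ref{claim:finite direction change}.
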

\begin{proof}
Recall from the proof of part (a) of Theorem \ref{thm:fbGrayson} that the number of zeroes of $\kappa$ is finite and non-increasing at positive times, and strictly decreasing any time $\kappa$ admits a degenerate or boundary zero. In particular, $\kappa_s$ changes sign at most a finite number of times at any boundary point (but could still vanish on an open set of times if $\pd\Omega$ contains flat portions.) Since, with respect to a fixed parametrization $\gamma:[-1,1]\times[0,T)\to\Omega$ for $\{\Gamma_t\}_{t\in[0,T)}$, $\kappa_x$ satisfies a linear diffusion equation, 
%\[
%(\kappa_x)_t=a(\kappa_x)_{xx}+(a_x+b)(\kappa_x)_x+(b_x+3\kappa^2)\kappa_x \;\;\text{in}\;\; [-1,1]\,,
%\]
we may apply \cite[Theorems C and D]{Ang88} to complete the proof.\footnote{Note that the argument of the Dirichlet case of \cite[Theorems C]{Ang88} yields the same conclusions under the mixed boundary condition: $u(x_1,t)=0$ and $u(x_2,t)\ne 0$ for all $t$.}
\end{proof}

%\subsection{Convex ancient free boundary curve shortening flows}

%{\color{red}Next, note that we are not able to rule out the type-II blow-up limits through reducing them to \emph{translating} solutions (since Hamilton's differential Harnack inequality is not available in the free boundary setting); instead, we are able to exploit the full classification of convex \emph{ancient} curve shortening flows \cite{BLTcsf,DHScsf}. (Recall that a curve in a convex open subset $\Omega$ of the plane is \emph{convex} if it is the relative boundary of a convex subset of $\Omega$.)}

Denoting the total curvature of $\Gamma_t$ by
\[
K(\Gamma_t)\doteqdot \int_{\Gamma_t}\vert\kappa\vert\,,
\]
we now obtain the following free boundary version of Altschuler's formula.

\begin{lemma}\label{lem:Altschuler}
On any compact free boundary curve shortening flow $\{\Gamma_t\}_{t\in(\alpha,\omega)}$ in a convex domain $\Omega\subset \R^2$, we have
\begin{equation}\label{eq:Altschuler}
\frac{d}{dt}K(\Gamma_t)=%\vert\cd\kappa\vert\big|_{\pd\Gamma_t}
\sum_{\pd\Gamma_t}\vert\kappa\vert\kappa^S-2\sum_{\{p:\kappa(p,\cdot)=0\}}\vert\cd\kappa\vert\,,
\end{equation}
except at finitely many times.
\end{lemma}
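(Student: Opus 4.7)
The plan is to reduce to ``generic'' times (of which, by Lemma \ref{lem:Sturm}, there are only finitely many exceptions), where the integrand $|\kappa|$ is smooth away from a finite collection of non-degenerate interior zeros of $\kappa$, and compute directly.

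First, I would invoke Lemma \ref{lem:Sturm}: excluding the trivial cases (stationary chord, shrinking semicircle) for which the formula holds trivially, Lemma \ref{lem:Sturm} guarantees that, except at finitely many times $t\in(\alpha,\omega)$, the curvature $\kappa$ has only finitely many zeros on $\Gamma_t$, all of which are non-degenerate and lie in the interior $\mathring\Gamma_t$. Fix such a ``good'' time $t$; in particular $\kappa$ is bounded away from zero on a neighbourhood of $\pd\Gamma_t$, and, for $t'$ near $t$, the interior zeros $\{p_j(t')\}_{j=1}^{n}$ of $\kappa(\cdot,t')$ depend smoothly on $t'$ (by the implicit function theorem, applied to $\kappa$ in a tubular neighbourhood of each $p_j(t)$).

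Next, partition $\Gamma_{t'}$ (for $t'$ near $t$) into arcs $I_0(t'),\dots, I_n(t')$ separated by the $p_j(t')$'s, with $\sigma_i\doteqdot\sign\kappa|_{I_i}$ constant and independent of $t'$. Since $\kappa$ vanishes at each $p_j$, the motion of these partition points contributes nothing when we differentiate, so, using $\pd_t\,ds=-\kappa^2\,ds$ and $\kappa_t=\kappa_{ss}+\kappa^3$ (from \eqref{eq:evolvekappa}), I get
\[
\frac{d}{dt}\int_{I_i}\sigma_i\kappa\,ds
\;=\;\int_{I_i}\sigma_i\bigl(\kappa_{ss}+\kappa^3-\kappa^3\bigr)\,ds
\;=\;\sigma_i\bigl[\kappa_s\bigr]^{\text{right end of }I_i}_{\text{left end of }I_i}.
\]
Summing over $i$, each interior zero $p_j$ (which is a right endpoint of $I_{j-1}$ and a left endpoint of $I_j$) contributes $\sigma_{j-1}\kappa_s(p_j)-\sigma_j\kappa_s(p_j)$; since the sign of $\kappa$ flips across a simple zero, $\sigma_j=-\sigma_{j-1}$, and a brief case check (depending on the sign of $\kappa_s(p_j)$) shows that this contribution equals $-2|\kappa_s(p_j)|=-2|\cd\kappa(p_j)|$.

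Finally, at each boundary point $q\in\pd\Gamma_t$, the orthogonality of $N$ and $N^S$ forces $N^S=\pm T$ (with the sign dictated by which endpoint we are at, namely the outward conormal to $\Gamma_t$), so $\langle\cd\kappa,N^S\rangle=\pm\kappa_s$. Combining with the Neumann condition from \eqref{eq:evolvekappa}, i.e.\ $\langle\cd\kappa,N^S\rangle=\kappa^S\kappa$, the boundary contribution picked up from the extremal arcs $I_0$ and $I_n$ works out (after tracking signs via the outward conormal convention) to
\[
\sigma_0\,\kappa^S(q_L)\,\kappa(q_L)+\sigma_n\,\kappa^S(q_R)\,\kappa(q_R)
\;=\;\sum_{\pd\Gamma_t}\vert\kappa\vert\kappa^S,
\]
since $\sigma_0\kappa(q_L)=|\kappa(q_L)|$ and $\sigma_n\kappa(q_R)=|\kappa(q_R)|$ on a neighbourhood of~$t$. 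Adding the interior and boundary contributions gives \eqref{eq:Altschuler} at every good time $t$.

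The main technical point to verify carefully is the smooth $t'$-dependence of the interior zeros near a good time $t$ (justifying that $K(\Gamma_{t'})$ is differentiable at $t$ and that the moving partition does not contribute extra boundary terms). This follows from the non-degeneracy of each zero at a good time (hence the implicit function theorem applies to $\kappa(\cdot,\cdot)=0$), combined with the fact that $\sigma_i\kappa$ vanishes precisely at the moving partition points, so the usual Leibniz rule gives no contribution from their motion. The exclusion of finitely many times in the statement is exactly what is needed to bypass the instants at which two interior zeros collide, a zero escapes through the boundary, or a boundary zero develops --- all of which are controlled by Lemma~\ref{lem:Sturm}.
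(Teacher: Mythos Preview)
Your argument is correct and follows essentially the same route as the paper's proof: both invoke Lemma~\ref{lem:Sturm} to restrict to times at which the zeros of $\kappa$ are finite, interior, and non-degenerate, partition $\Gamma_t$ at those zeros, differentiate each piece using $\partial_t(\kappa\,ds)=\kappa_{ss}\,ds$, and then collect the interior contributions $-2|\nabla\kappa|$ and the boundary contributions via the Neumann condition $\langle\nabla\kappa,N^S\rangle=\kappa^S\kappa$. Your presentation is slightly more explicit about the implicit function theorem and the vanishing Leibniz terms, but the substance is identical.
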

\begin{proof}
By %a result of Angenent \cite{Ang88} [Unfortunately, these results do not apply directly. I think we need to check that his extension argument still works with the Robin-like boundary condition for $\kappa$ (\eqref{eq:boundary gradient} below) to first make the zero times for $\kappa$ discrete.]
Lemma \ref{lem:Sturm}, either the solution is a stationary chord or shrinking semicircle (and hence the claim holds trivially) or the inflection points of $\Gamma_t$ are finite in number and non-degenerate, except possibly at a finite set of times% (at which their number strictly decreases)
. Away from these times, we may split $\Gamma_t$ into $N$ segments $\{\Gamma_t^j\}_{j=1}^N$, with boundaries $\{a_{i-1},a_i\}_{i=1}^N$, on which $\kappa$ is nonzero and alternates sign, so that, for an appropriate choice of arclength parameter,
\bann
\frac{d}{dt}\int_{\Gamma_t}\vert\kappa\vert={}&\sum_{j=1}^N(-1)^{j-1}\frac{d}{dt}\int_{\Gamma_t^j}\kappa\,ds\\
={}&\sum_{i=1}^N(-1)^{j-1}\int_{\Gamma_t^j}\kappa_{ss}\,ds\\
%={}&\sum_{i=1}^N(-1)^{j-1}\kappa_{s}|_{a_{i-1}}^{a_j}\\
={}&-\kappa_s(a_0)+2\sum_{j=1}^{N-1}(-1)^{j-1}\kappa_{s}(a_j)+(-1)^{N-1}\kappa_s(a_N)\,.
\eann
Observe that $(-1)^{i}\kappa_{s}(a_i)\ge 0$ for each $i$ and%. To deal with the boundary points, observe that
\ba\label{eq:boundary gradient}
0={}&\frac{d}{dt}\inner{N}{N^S}\nonumber\\
={}&\inner{\cd\kappa}{N^S}-\kappa\inner{N}{D_NN^S}\nonumber\\
={}&\inner{\cd\kappa}{N^S}-\kappa\kappa^S
\ea
at the boundary. The claim follows since $\inner{\pd_s}{N^S}=-1$ at $a_0$ and $+1$ at $a_N$.
\end{proof}

%Unfortunately, \eqref{eq:Altschuler} is no longer a monotonicity formula due to the presence of the boundary term. 
We may eliminate the boundary term in \eqref{eq:Altschuler}, resulting in a genuine monotonicity formula, by introducing the total curvature $\tilde K(\Gamma_t)$ of the portion of the boundary $\pd\Omega$ (counted with multiplicity) traversed by the endpoints of $\Gamma_t$. That is, we set
\[
\tilde K(\Gamma_t)%\doteqdot \int_{\sigma_{\text{L}}(0)}^{\sigma_{\text{L}}(t)}\vert\kappa^\text{L}\vert+\int_{\sigma_{\text{R}}(0)}^{\sigma_{\text{R}}(t)}\vert\kappa^\text{R}\vert\,,
\doteqdot \int_t^T\vert\kappa^\text{L}\vert\,ds_{\mathrm{L}}+\int_t^T\vert\kappa^\text{R}\vert\,ds_{\mathrm{R}}\,,
\]
where $\kappa^{\text{L}}$ (resp. $\kappa^{\text{R}}$) denotes the curvature and $ds_{\mathrm{L}}$ (resp. $ds_{\mathrm{R}}$) the length element of the piecewise smoothly immersed curve $\zeta^{\text{L}}:[0,T)\to\pd\Omega$ (resp. $\zeta^{\text{R}}:[0,T)\to\pd\Omega$) determined by the left (resp. right) boundary point $\zeta_{\text{L}}(t)$ (resp. $\zeta_{\text{R}}(t)$) of $\pd\Gamma_t$. 

Recall that the boundary points may only change direction at most finitely many times. Moreover, the boundary avoidance estimate, Proposition \ref{prop:boundary-avoidance}, provides room for uniform barriers that prevent the boundary from cycling $\pr\Omega$ an infinite number of times as $t\to T<\infty$. It follows that the boundary points converge, and in particular $\tilde K(\Gamma_t)$ is finite.
%Observe that $\tilde K(\Gamma_t)$ is finite in case $T<\infty$, since the boundary points change direction at most a finite number of times and the boundary avoidance estimate provides room for uniform barriers which prevent the boundary points from cycling $\pd\Omega$ an infinite number of times as $t\to T$. (One may use small shrinking circles if $L(\Gamma_t)\not\to T$ as $t\to 0$ or the evolution of an enclosing circular arc if $L_{\Gamma_t}\to 0$ as $t\to T$). 

Now observe that, away from the (finitely many) boundary inflection times, the rate of change of $\tilde K(\Gamma_t)$ exactly cancels the boundary term in \eqref{eq:Altschuler}. Thus, if we define
\[
\bs K(\Gamma_t)\doteqdot K(\Gamma_t)+\tilde K(\Gamma_t)\,,
\]
then we obtain the following monotonicity formula.
\begin{corollary}
On any compact free boundary curve shortening flow $\{\Gamma_t\}_{t\in(\alpha,\omega)}$ in a convex domain $\Omega\subset \R^2$
\begin{equation}\label{eq:Altschuler_prime}
\frac{d}{dt}\bs K(\Gamma_t)=-2\sum_{\{p:\kappa(p,\cdot)=0\}}\vert\cd\kappa\vert
\end{equation}
except at finitely many times.
\end{corollary}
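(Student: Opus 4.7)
The plan is to show that the rate at which $\tilde K(\Gamma_t)$ decreases exactly cancels the boundary term $\sum_{\pd\Gamma_t}|\kappa|\kappa^S$ in the identity of Lemma \ref{lem:Altschuler}; the corollary then follows by addition.

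First I would pin down the motion of the endpoints. With the ``geometric'' parametrisation $\gamma$ satisfying $\pd_t\gamma=-\kappa N$ from \eqref{eq:fb-csf}, continuity up to $\pd M$ gives $\dot\zeta_{\mathrm{L}}(t)=-\kappa N$, and similarly for $\zeta_{\mathrm{R}}$. The orthogonal contact condition $\langle N,N^S\rangle=0$ renders this vector tangent to $\pd\Omega$, and in particular $|\dot\zeta_{\mathrm{L}}(t)|=|\kappa|$ at the left endpoint, so $ds_{\mathrm{L}}=|\kappa|\,dt$ along the trajectory of $\zeta_{\mathrm{L}}$, and likewise for $\zeta_{\mathrm{R}}$.

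Next I would apply the fundamental theorem of calculus to
\[
\tilde K(\Gamma_t)=\int_t^T\kappa^S(\zeta_{\mathrm{L}}(\tau))|\dot\zeta_{\mathrm{L}}(\tau)|\,d\tau+\int_t^T\kappa^S(\zeta_{\mathrm{R}}(\tau))|\dot\zeta_{\mathrm{R}}(\tau)|\,d\tau,
\]
using $\kappa^S\ge0$ from convexity of $\Omega$ to drop the absolute value on $\kappa^L$ and $\kappa^R$. Combined with $|\dot\zeta|=|\kappa|$, this gives
\[
\frac{d}{dt}\tilde K(\Gamma_t)=-\sum_{\pd\Gamma_t}\kappa^S|\kappa|
\]
whenever $\zeta_{\mathrm{L}}$ and $\zeta_{\mathrm{R}}$ are smoothly immersed at $t$. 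Summing with Lemma \ref{lem:Altschuler} then produces $\frac{d}{dt}\bs K(\Gamma_t)=-2\sum_{\{p:\kappa(p,\cdot)=0\}}|\cd\kappa|$ as desired.

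It remains to verify that the exceptional times form a finite set. These consist of (i) the exceptional times in Lemma \ref{lem:Altschuler} (where an inflection point of $\Gamma_t$ is degenerate or lies on $\pd\Gamma_t$), and (ii) the times at which some endpoint changes direction, i.e.\ at which $\kappa$ vanishes on $\pd\Gamma_t$. Both sets are finite by the Sturmian analysis of Lemma \ref{lem:Sturm} (the second is also the content of Claim \ref{claim:finite direction change}). There is no substantive obstacle here --- the only subtlety is the identification $|\dot\zeta|=|\kappa|$ at the boundary from the orthogonal contact condition, which is exactly what makes the two terms cancel cleanly.
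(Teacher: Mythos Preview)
Your proposal is correct and follows exactly the approach the paper intends: the paper's own argument is simply the one-line assertion that ``away from the (finitely many) boundary inflection times, the rate of change of $\tilde K(\Gamma_t)$ exactly cancels the boundary term in \eqref{eq:Altschuler}'', and your computation $|\dot\zeta|=|\kappa|$ (from $\pd_t\gamma=-\kappa N$ and the orthogonal contact $N\perp N^S$) is precisely the detail that justifies this cancellation. Your accounting of the exceptional times via Lemma \ref{lem:Sturm} is also what the paper has in mind.
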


%Note, finally, that $\tilde K(\Gamma_t)$ is bounded when $T<\infty$ since the boundary points change direction at most a finite number of times and the boundary avoidance estimate provides room for uniform barriers (small circular arcs, say) which prevent the boundary points from cycling $\pd\Omega$ an infinite number of times as $t\to T$ (unless $L_{\Gamma_t}\to 0$, but an enclosing circular arc does the job in this case).

Putting these ingredients together, we arrive at Theorem \ref{thm:fbGrayson}.

\begin{proof}[Proof of Theorem \ref{thm:fbGrayson} part (b) (intrinsic method)]
By hypothesis, $T<\infty$. By applying the ODE comparison principle to \eqref{eq:evolvekappa}, we find that
\begin{equation}\label{eq:kappa lower bound}
\max_{\Gamma_t}\kappa^2\ge \frac{1}{2(T-t)}\,.
\end{equation}
We claim that
\begin{equation}\label{eq:type I}
\limsup_{t\to T}\max_{\Gamma_t}(T-t)\kappa^2<\infty\,.
\end{equation}
Indeed, if this is not the case, then we may blow-up \emph{\`a la Hamilton} to obtain a Grim Reaper solution, which will contradict the chord-arc estimate: choose a sequence of times $t_{j}\in[0,T-j^{-1})$ and a sequence of points $x_{j}\in M$ such that
\[
\kappa^{2}(x_{j},t_{j})\left(T-\tfrac{1}{j}-t_{j}\right)=\max_{(x,t)\in M\times \left [0,T-\frac{1}{j}\right]}\kappa^{2}(x,t)\left(T-\tfrac{1}{j}-t\right)\,,
\]
set $\sigma_j\doteqdot\lambda_j^2t_j$ and $T_{j}=\lambda^2_{j}\left(T-\frac{1}{j}-t_{j}\right)$, where $\lambda_j\doteqdot \vert\kappa(x_{j},t_{j})\vert$, and consider the sequence of rescaled solutions $\gamma_j:M\times (-\sigma_j,T_j)\to \Omega_j\doteqdot \lambda_j(\Omega-\gamma(x_j,t_j))$ defined by
\[
\gamma_{j}(x,t)=\lambda_{j}\left(\gamma\left(x,t_{j}+\lambda_j^{-2}t\right)-\gamma(x_{j},t_{j})\right).
\]
By hypothesis, we can pass to some subsequence such that $t_j\to T$, $\lambda_j\to\infty$, $\sigma_j\to \infty$, $T_j\to\infty$, and $\Omega_j\to\Omega_\infty$, where $\Omega_\infty$ is either the plane or some halfplane. Observe also that $\gamma_{j}(x_{j},0)=0$ and
\begin{align*}
\kappa_{j}^2(x,t)={}&\lambda_j^{-2}\kappa^2\left(x,t_{j}+\lambda_j^{-2}t\right)\leq\frac{T-\frac{1}{j}-t_{j}}{T-\frac{1}{j}-(t_{j}+\lambda_j^{-2}t)}=\frac{T_{j}}{T_{j}-t}.
\end{align*}
So the curvature is uniformly bounded on any compact time interval and, up to a change of orientation, takes unit value at the spacetime origin. Thus, by estimates for quasilinear parabolic partial differential equations with transverse Neumann boundary condition (cf. \cite{Stahl96b}), the rescaled solutions $\gamma_j:M\times [-\sigma_j,T_j)\to\Omega_j$ converge in $C^\infty$, after passing to a subsequence, to a smooth, proper limiting solution $\gamma_{\infty}:M_\infty\times(-\infty,\infty)\to \Omega_\infty$ uniformly on compact subsets of $M_\infty\times(-\infty,\infty)$. 
%Indeed, if this is not the case, then, following Hamilton, we can spacetime translate and parabolically rescale (cf. \cite{}), we obtain a sequence of flows $\{\Gamma^j_{t}\}_{t\in I_j}$ with curvature normalized at the spacetime origin and bounded on any compact subset of $\R^2\times(-\infty,\omega)$, where $\omega=1$ or $\omega=\infty$ respectively, for $j$ sufficiently large \cite{HamiltonPinched}. Taking a subsequential limit \cite{Stahl96a}, we obtain an embedded ancient (or eternal) solution to curve shortening flow in the (half-)plane. 

By an argument of Altschuler \cite{Altschuler}, we find that the limit flow is locally uniformly convex (and hence convex by the strong maximum principle and the curvature normalization at the spacetime origin): 
%If we set
%\[
%\bs K(\Gamma_t)\doteqdot K(\Gamma_t)+\tilde K(S_t)\,,
%\]
integrating the identity \eqref{eq:Altschuler_prime} in time on the rescaled flows between times $a$ and $b$ yields
\[
\bs K(\Gamma_{\lambda_j^{-2}b+t_j})-\bs{K}(\Gamma_{\lambda_j^{-2}a+t_j})=-2\sum_{\{p:\kappa^j(p,t)=0\}}\int_a^b\vert\cd\kappa^j\vert dt\,.
\]
Since $\bs{K}(\Gamma_t)$ is non-negative and non-increasing, it takes a limit as $t\to T$, and hence the left hand side tends to zero as $j\to\infty$. Since $a$ and $b$ were arbitrary, we conclude that any inflection point of the limit flow is degenerate. Since the limit flow is not a critical chord (due to the normalization of $\kappa$ at the spacetime origin), we conclude that there are no inflection points, and hence the limit is indeed locally uniformly convex. Proposition \ref{prop:ancient} now implies that the limit solution, being eternal and non-flat, is a (half-)Grim Reaper, which is impossible due to the chord-arc estimate. This proves \eqref{eq:type I}.

We next claim that
\ba\label{eq:smoothestimatesHamilton}
%\limsup_{t\nearrow T}\left((T-t)^{-1}\max_{\M\times\{t\}}\vert\mathring{\W}\vert^2+(T-t)^{-(m+1)}\max_{\M\times\{t\}}\vert\cd^m\W\vert^2\right)=0\,.
\limsup_{t\nearrow T}\max_{\Gamma_t}\vert 2(T-t)\kappa^2-1\vert=0\;\;\text{and}\;\;\limsup_{t\nearrow T}\max_{\Gamma_t}(T-t)^{-(m+1)}\vert \cd^m\kappa\vert^2=0
\ea
for all $m\in\N$. %, where $A_m\doteqdot\max_{\Gamma_t}\vert \cd^m\kappa\vert^2$. %Suppose, to the contrary, that there is some $\varepsilon>0$, some $m\in\N$ and some sequence of times $t_j\nearrow T$ such that
%\ba\label{eq:Hamiltoncontra}
%(T-t_j)^{-(m+1)}A_m(t_j)\geq \varepsilon
%\ea
%for all $j\in\N$. 
Indeed, given any sequence of times $t_j\to T$, choose $x_j\in M$ so that $\kappa^2(x_j,t_j)=\max_{x\in M}\kappa^2(x,t_j)$, set $\lambda_j\doteqdot (T-t_j)^{-\frac{1}{2}}$ and $\sigma_j\doteqdot\lambda_j^2t_j$, and consider the sequence of rescaled solutions $\gamma_j:M\times (-\sigma_j,1)\to\Omega_j\doteqdot\lambda_j(\Omega-\gamma(x_j,t_j))$ defined by
\[
\gamma_{j}(\,\cdot\,,t)=\lambda_{j}\left(\gamma\left(\,\cdot\,,t_{j}+\lambda_j^{-2}t\right)-\gamma(x_{j},t_{j})\right)\,.
\]
For each $j$, $\gamma_{j}(x_{j},0)=0$, $\sigma_j\to\infty$, and
\[
\kappa_j^2(\,\cdot\,,t)=\lambda_j^{-2}\kappa^2\left(\,\cdot\,,t_j+\lambda_j^{-2}t\right)\leq \frac{C^2}{1-t}\,.
\]
Thus, as above, %by estimates for quasilinear parabolic partial differential equations with transverse Neumann boundary condition (cf. \cite{Stahl96b}), 
the rescaled solutions $\gamma_j:M\times [-\sigma_j,1)\to\Omega_j$ converge in the smooth topology, after passing to a subsequence, to a smooth, proper limiting solution $\gamma_{\infty}:M_\infty\times(-\infty,1)\to\Omega_\infty$ uniformly on compact subsets of $M_\infty\times(-\infty,1)$, where $\Omega_\infty$ is either the plane or a halfplane. %(and $M_\infty$ is diffeomorphic to either a closed interval, a halfline or a line). 
By \eqref{eq:kappa lower bound}, the curvature must, up to a change of orientation, be positive at the spacetime origin, and hence positive everywhere by the above argument. Since the chord-arc estimate is scale invariant, we deduce from Proposition \ref{prop:ancient} that the limit is a shrinking semicircle, from which the estimates \eqref{eq:smoothestimatesHamilton} follow.

Convergence to a point on $\pd\Omega$ now follows by integrating the curve shortening flow equation and applying the first of the estimates \eqref{eq:smoothestimatesHamilton}; smooth convergence to the corresponding unit semi-circle after rescaling then follows by converting the geometric estimates \eqref{eq:smoothestimatesHamilton} into estimates for the rescaled immersions.
%The remainder of the proof is straightforward (see \cite{Stahl96b}).
\end{proof}

\subsection{Remarks}

\emph{Existence} of a (geometrically unique) free boundary curve shortening flow out of any given embedded closed interval having orthogonal boundary condition in a convex domain was proved by Stahl %He also showed that, if the maximal time of existence is finite, then the maximum value of the norm of the curvature is unbounded as the maximal time is approached. 
\cite{Stahl96a}.

Note that our argument for part (b) of Theorem \ref{thm:fbGrayson} does not require Stahl's result \cite[Proposition 1.4]{Stahl96b} on the convergence to points of bounded, convex, non-flat free boundary curve shortening flows in convex domains, and hence provides a new proof of it (finiteness of $T$ is a straightforward consequence of non-trivial convexity and the maximum principle; cf. \cite[Theorem 3.2]{Stahl96b}).

We found it convenient in the intrinsic blow-up approach to exploit the full classification of convex ancient solutions to planar curve shortening flow \cite{BLTcsf} (via Proposition \ref{prop:ancient}). It would suffice, as in \cite{Huisken96}, to exploit the (easier) classification of convex translators and shrinkers, however, since a suitable monotonicity formula is available \cite{Buckland} (see also \cite[Theorem 5.1]{Edelen}) and the differential Harnack inequality (which is not available in the general free boundary setting) may be invoked \emph{after} obtaining an eternal convex limit flow in $\R^2_+$ or $\R^2$.

\section{Remarks on unbounded solutions}
\label{sec:unbounded}

The above arguments also yield information for solutions with unbounded timeslices (in unbounded convex domains $\Omega$) so long as suitable conditions at infinity are in place.

In this case, since $L=\infty$, we consider the unnormalized auxiliary functions
\[ Z(x,y,t) = d(\gamma(x,t),\gamma(y,t))-\phi\left(\ell(x,y,t),t\right), \]
\[ \tilde{Z}(x,y,t) = \tilde{d}(\gamma(x,t),\gamma(y,t),t)-\phi(\tilde{\ell}(x,y,t),t),\]
\[\bar{Z}(x,y,z,t) = d(\gamma(x,t) ,z) + d(\gamma(y,t) ,z)-\phi(\tilde{\ell}(x,y,t ),t)\]
and
\[
\bs{Z}(x,y,t) \doteqdot \bs{d}(x,y,t)-\phi\left(\bs{\ell}(x,y,t),t\right)\,,
\]
where $\phi$ is any smooth modulus. Following Sections \ref{sec:spatial variation} and \ref{sec:evolution}, we find (in particular) that 
\bann
\phi_t%\ge{}& 4\varphi''+\varphi'\int_x^y\kappa^2\\
%\ge{}& 4\varphi''+4\frac{\varphi'}{\ell}(\arccos\varphi')^2\\
%\ge{}&4\varphi''+4\frac{\varphi'}{\ell}\left(\eta^2-\frac{2\eta(\varphi'-\cos\eta)}{\sin\eta}\right)\,,\;\;\vert\eta(\ell,t)\vert\le \pi\\
%\ge{}&4\varphi''+8\frac{\varphi'(1-\varphi')}{\ell}\\
>{}& 4\phi''
\eann
at an interior parabolic minimum of $\bs Z$. 

Taking $\phi(\lambda,t)=\varepsilon\lambda$ generalizes an estimate of Huisken \cite[Theorem 2.1]{Huisken96}: if each timeslice $\Gamma_t$ has one end and the ends are all asymptotic to a fixed ray, then initial lower bounds for $\bs d/\bs\ell$ are preserved. %(note that the chord-arc profile of a wedge of opening angle $\theta\in(0,\pi]$ is given by $\psi(\ell)=\ell\sqrt{\frac{1-\cos\theta}{2}}$). 
If the asymptotic ray points into the interior of the asymptotic cone of $\Omega$, then we find that $\bs d/\bs\ell$ is uniformly bounded from below. This rules out finite time singularities and, we expect, can be used to give a simple proof of smooth convergence to a half-line or a half-expander as $t\to\infty$.
%and we may deduce by similar reasoning as in the planar setting that a half-line or half-expander is forming at $t=\infty$.

Note that the above estimate is vacuous if the asymptotic ray does not point into the interior of the asymptotic cone of $\Omega$ (since in that case $\inf_{\Gamma_0}\bs d/\bs\ell\equiv 0$). On the other hand, taking $\phi$ to be the error function solution to the heat equation, we may still obtain an (exponentially decaying) lower bound of the form $\bs d\ge \phi(\bs\ell,t)$ in the case that the asymptotic ray is parallel to the boundary of the asymptotic cone%(since this guarantees that $\bs d$ is bounded below as $\bs\ell\to\infty$)
. This also rules out finite time singularities and, we expect, can be used to give a simple proof of smooth convergence to a half-line or a half-Grim Reaper as $t\to\infty$.

\bibliographystyle{acm}
\bibliography{bibliography}

\end{document}